\def\l@subsection{\@tocline{2}{0pt}{2.5pc}{5pc}{}}
\renewcommand\tocchapter[3]{%
  \indentlabel{\@ifnotempty{#2}{\ignorespaces#2.\quad}}#3%
}
\newcommand\@dotsep{4.5}
\def\@tocline#1#2#3#4#5#6#7{\relax
  \ifnum #1>\c@tocdepth 
  \else
    \par \addpenalty\@secpenalty\addvspace{#2}%
    \begingroup \hyphenpenalty\@M
    \@ifempty{#4}{%
      \@tempdima\csname r@tocindent\number#1\endcsname\relax
    }{%
      \@tempdima#4\relax
    }%
    \parindent\z@ \leftskip#3\relax \advance\leftskip\@tempdima\relax
    \rightskip\@pnumwidth plus1em \parfillskip-\@pnumwidth
    #5\leavevmode\hskip-\@tempdima{#6}\nobreak
    \leaders\hbox{$\m@th\mkern \@dotsep mu\hbox{.}\mkern \@dotsep mu$}\hfill
    \nobreak
    \hbox to\@pnumwidth{\@tocpagenum{#7}}\par
    \nobreak
    \endgroup
  \fi}
\renewcommand\csname r@tocindent0\endcsname{0pt}
\def\l@subsection{\@tocline{2}{0pt}{2.5pc}{5pc}{}}
\renewcommand{\theequation}{\arabic{section}.\arabic{equation}}
\newtheorem{theorem}{Theorem}[section]
\newtheorem{lemma}[theorem]{Lemma}
\newtheorem{corollary}[theorem]{Corollary}
\newtheorem{proposition}[theorem]{Proposition}
\numberwithin{equation}{section}
\theoremstyle{remark}
\newtheorem{remark}{Remark}[section]
\newcommand{\Z}{{\mathbb{Z}}}
\newcommand{\R}{{\mathbb{R}}}
\newcommand{\T}{{\mathbb{T}}}
\newcommand{\C}{\mathbb{C}}
\newcommand{\n}[1]{\|#1\|}
\newcommand{\bn}[1]{\big\|#1\big\|}
\newcommand{\bbn}[1]{\Big\|#1\Big\|}
\newcommand{\lr}[1]{\left\{#1\right\}}
\newcommand{\lrs}[1]{\left(#1\right)}
\def\leq{\leqslant}
\def\geq{\geqslant}
\def\la{\lambda}
\def\om{\omega}
\def\na{\nabla}
\def\De{\Delta}
\def\pa{\partial}
\def\pat{\partial_t}
\def\lan{\langle}
\def\ran{\rangle}
\def\im{\operatorname{Im}}
\begin{document}
\title[Quantitative stability for the 2D Couette flow]{Quantitative stability for the 2D Couette flow on the infinite channel with non-slip boundary condition}

\author[]{Qionglei Chen}
\address[]{Institute of Applied Physics and Computational Mathematics, 100088 Beijing, China}
\email{chen\_qionglei@iapcm.ac.cn}

\author[]{Zhen Li}
\address[]{School of Mathematical Sciences, Key Laboratory of Mathematics and Complex Systems, Ministry of Education, Beijing Normal University, 100875 Beijing, China}
\email{lizhen@bnu.edu.cn}

\author[]{Changxing Miao}
\address[]{Institute of Applied Physics and Computational Mathematics, 100088 Beijing, China}
\email{miao\_changxing@iapcm.ac.cn}
\keywords{Couette flow, Stability, Transition threshold, Resolvent estimate}
\begin{abstract}
In this paper, we investigate the quantitative stability for the 2D Couette flow on the infinite channel $\R\times [-1,1]$ with non-slip boundary condition. Compared to the case $\T\times [-1,1]$, we  establish the stability in the context of long wave associated with the frequency range $0\leq |k|<1$ by developing the resolvent estimate argument.
The new ingredient is  to discover the key division point at $10\nu$ in the frequency interval $(0,1)$ by  the sharp Sobolev constant in Wirtinger's inequality together with the refined estimates of the Airy function in the interval $(0,1)$, and then we  establish the space-time estimates on the low-frequency $0\leq |k|\leq 10 \nu$ and the intermediate-frequency $ 10 \nu\leq |k|<1$, respectively.  As an application of the space-time estimates, we  obtain the nonlinear transition threshold to be $\gamma\leq\frac12$.
Meanwhile, we also show that when the frequencies $|k|\geq \nu^{1-}$, the enhanced dissipation effect occurs for the linearized Navier-Stokes equations.
\end{abstract}

\maketitle
\tableofcontents
\section{Introduction}\label{sec:Introduction}
We consider the incompressible Navier-Stokes equations on the infinite channel $\Omega=\R\times \mathrm{I}$ with $\mathrm{I}=[-1,1]$:
\begin{equation}\label{NS}
	\left\{
    \begin{aligned}
    &\pat V + (V\cdot\na)V -\nu\De V+ \na P =0, \\
    &\na\cdot V = 0,\\
    & V(0,x,y) = V^{in}(x,y),
    \end{aligned}
	\right.
	\end{equation}
where $V(t,x,y)$ is the velocity field, $P$ is the corresponding pressure and $\nu>0$ is the viscosity. Understanding the enhanced dissipation and transition threshold in shear flows may be a crucial step in addressing the transition from laminar to turbulent flow, see \cite{Reynolds,Rayleigh,Kelvin}.

The Couette flow, among the simplest laminar flows, is a steady solution to the Navier-Stokes equation \eqref{NS}, yet it induces several long-standing problems in hydrodynamics. Notably, the Sommerfeld paradox indicates that theoretical analysis shows that the plane Couette flow and pipe Poiseuille flow are linearly stable for all Reynolds number $Re>0$ \cite{Drazin-Reid,Romanov}; however, the experiments and numerical simulations \cite{Chapman,Yaglom} have revealed that these flows transition to turbulence beyond a critical Reynolds number. An increasing amount of effort has been dedicated to understanding this paradox, including Chapman \cite{Chapman} and references therein.  Trefethen et al. \cite{Trefethen} initially reformulated the problem into the study of the transition threshold. Later, Bedrossian-Germain-Masmoudi \cite{Bedrossian-Germain-Masmoudi-4} presented a mathematical formulation of the transition threshold problem as described below:

Find a suitable space $X$ with the norm $\|\cdot\|_{X}$ and determine a $\gamma=\gamma(X)$ so that
\begin{align}
&\|u^{in}\|_{X}\leq \nu^{\gamma}\Longrightarrow \text{ stability},\label{stable}\\
&\|u^{in}\|_{X}\gg \nu^{\gamma}\Longrightarrow \text{ instability}.\label{unstable}
\end{align}
The exponent $\gamma$ is referred to as the transition threshold.

A great deal of work has been devoted to studying the transition threshold problem for the Couette flow. Romanov \cite{Romanov} first obtained the linear spectrum stability for Couette flow in three-dimensional domain $\{x,y,z: |x|\leq 1, y\in \R, z\in\R\}$ with non-slip boundary condition. In a series of important works \cite{Bedrossian-Germain-Masmoudi-1,Bedrossian-Germain-Masmoudi-2,Bedrossian-Germain-Masmoudi-3,
Bedrossian-Germain-Masmoudi-4,Bedrossian-Masmoudi-Vicol,Bedrossian-Wang-Vicol,Masmoudi-Zhao}, Bedrossian, Germain, Masmoudi et al. studied the stability issue of the Couette flow and obtained significant achievements. Chen-Li-Wei-Zhang \cite{Chen-Li-Wei-Zhang} first investigated the Couette flow on $\T\times [-1,1]$ with the non-slip boundary condition and acquired the transition threshold $\gamma\leq \frac12$ by establishing the resolvent estimates and the space-time estimates. Arbon-Bedrossian \cite{Arbon-Bedrossian} first derived the transition threshold $\gamma\leq \frac12+$ on three unbounded $x$-domains: the whole plane $\R\times \R$, the half plane $\R\times [0,\infty)$ and $\R\times [-1,1]$ with Navier-slip boundary conditions. For more study of Couette flow in different settings, see \cite{Chen-Wei-Zhang2,Wei-Zhang-1,Wei-Zhang-2,Li-Masmoudi-Zhao} and references therein.

In addition to the Couette flow, the Poiseuille flow and Kolmogorov flow have also attracted the attention of many researchers, see \cite{Ibrahim,Li-Wei-Zhang1,Wei-Zhang-Zhao-2,Chen-Wei-Zhang,Chen-Ding-Lin-Zhang, Zelati-Elgindi-Widmayer,Zotto,Ding-Lin,Wei-Zhang}. Furthermore, for other mathematical results concerning the stability of general shear flows, one may refer to \cite{Almog-Helffer,Chen-Wei-Zhang1,Gerard-Maekawa-Masmoudi,Lin-Xu, Wei-Zhang-Zhao,Wei-Zhang-Zhao-1,Zillinger,Zillinger1} and references therein.

Although there is a mounts of studies on the stability in the case $x\in \T$,  there has been little related mathematical research on $x \in \R$. It is an interesting issue to study the long waves effect on nonlinear stability, which may lead to instability and potentially give rise to turbulence.  Nevertheless, there seems to be no quantitative analysis for Couette flow on $\R\times [-1,1]$ with non-slip boundary condition. This motivates us to study the transition threshold and enhanced dissipation of the Couette flow
$$U_{Cou}=(y,0)$$
related to \eqref{NS} on the infinite channel $\Omega=\R\times \mathrm{I}$.
For this purpose, we consider the perturbation velocity $u=V-U$ such that
\begin{equation}\label{pertu}
\left\{
\begin{aligned}
&\pat u+u\cdot\na u-\nu \De u+(u^2,0)+y\pa_xu+\na p=0,\\
&\na\cdot u=0,\\
&u(t,x,\pm 1)=0,\\
&u(0,x,y)=u^{in}(x,y),
\end{aligned}
\right.
\end{equation}
and prove the transition threshold $\gamma \leq \frac12$ so that \eqref{stable} is valid for an appropriate space, where the non-slip boundary condition given in \eqref{pertu} exhibits the boundary layer effect.

Let $\om=\pa_y u^1-\pa_x u^2$ be the vorticity and $\psi$ be the stream function. Then the vorticity system can be written as
\begin{equation}\label{equ: omli11}
\left\{
\begin{aligned}
&\pat\om-\nu\De \om+u\cdot\na \om+y\pa_x\om=0,\\
&\De\psi=\om,\\
&\psi(t,x,\pm 1)=\pa_y \psi(t,x,\pm 1)=0,\\
&\om(0,x,y)=\om^{in}(x,y).
\end{aligned}
\right.
\end{equation}

Given a function $f(t,x,y)$, we denote its Fourier transform with respect to the $x$-variable by
\begin{align*}
f_k(t,y)=:\int_{\R} f(t,x,y) e^{-ikx}dx.
\end{align*}
We take the Fourier transform in $x$-variable for the linearized system of \eqref{equ: omli11}:
\begin{equation}
\left\{
\begin{aligned}\label{equ: omli11k}
&\pa_t\om_k-\nu(\pa^2_y-k^2)\om_k+ik y\om_k=0, \\
&\om_k=(\pa^2_y-k^2)\psi_k,\\
&\psi_k(t,k,\pm 1)=\psi'_k(t,k,\pm 1)=0,\\
&\om_k(0)=\om^{in}_k(k,y),
\end{aligned}
\right.
\end{equation}
where $k\neq 0$.

Now, we are in a position to state the main result.
\begin{theorem}\label{Th: tran thre}
Suppose that $u^{in}\in H^1_0\cap H^2(\Omega)$ with $\mathrm{div} u^{in}=0$ in \eqref{pertu}. Let $0<\nu_0\leq 1$ and $c$ be a suitably small constant. Then for $0<\nu\leq\nu_0$ and $\|u^{in}\|_{H^2}\leq c\nu^{\frac12}$, the solution $u$ to \eqref{pertu} satisfies
\begin{align*}
\| E_k\|_{L^1_k(\R)} \leq C\nu^{\frac12},
\end{align*}
where
\begin{equation*}
E_k=\left\{
\begin{aligned}
&\|\om_k\|_{L^\infty_t L^2_y}+\nu^{\frac12}\|\om_k\|_{L^2_t L^2_y}+\|u_k\|_{L^\infty_t L^\infty_y}+\nu^{\frac12}\|u_k\|_{L^2_tL^2_y}, \quad 0\leq|k|< 10\nu,\\
&\|(1-|y|)^{\frac12}\om\|_{L^\infty_t L^2_y}+\nu^{\frac14}|k|^{-\frac14}\|\om_k\|_{L^\infty_t L^2_y}+\nu^{\frac14}|k|^{\frac14}\|\om_k\|_{L^2_t L^2_y}+\|u_k\|_{L^\infty_t L^\infty_y}\\
&+|k|^{\frac12}\|u_k\|_{L^2_tL^2_y},\qquad\qquad\qquad\qquad\qquad\qquad\qquad\qquad\qquad \qquad  10\nu\leq|k|<1,\\
&\|(1-|y|)^{\frac12}\om_k\|_{L^\infty_t L^2_y}+\nu^{\frac14}|k|^{\frac12}\|\om_k\|_{L^2_t L^2_y}+|k|^{\frac12}\|u_k\|_{L^\infty_t L^\infty_y}+|k|\|u_k\|_{L^2_t L^2_y}, \,\, |k|\geq 1.
\end{aligned}
\right.
\end{equation*}
\end{theorem}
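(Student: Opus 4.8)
The plan is to reduce the nonlinear stability estimate to a collection of linear space-time estimates via a bootstrap argument, treating the three frequency bands separately as reflected in the definition of $E_k$. First I would establish the required resolvent and space-time estimates for the linearized operator in \eqref{equ: omli11k} on each frequency regime $0\le|k|<10\nu$, $10\nu\le|k|<1$, and $|k|\ge 1$; these are presumably proved in earlier sections of the paper and I would invoke them as the linear backbone. The structure of $E_k$ tells me exactly which norms the linear theory must control: for the low-frequency band the estimates look like a pure heat-type decay (weights $\nu^{1/2}$ on the dissipation terms), whereas for the intermediate band the weights $\nu^{1/4}|k|^{\pm1/4}$ encode the enhanced dissipation timescale $(\nu|k|)^{-1/3}$-type gain together with the boundary-layer weight $(1-|y|)^{1/2}$, which is the hard analytic input coming from the Airy function estimates and the sharp Wirtinger constant that fix the division point $10\nu$.

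Next I would set up the nonlinear iteration. Writing the Duhamel formula for the vorticity equation in \eqref{equ: omli11}, the nonlinearity is the transport term $u\cdot\nabla\om$, which in Fourier becomes a convolution in $k$. The key point is that the norm $\|E_k\|_{L^1_k(\R)}$ is designed so that the bilinear estimate closes: I would bound the contribution of $u\cdot\nabla\om$ to $E_k$ by a convolution $\sum$ (integral over the frequency $\ell$) of $E_\ell$ against $E_{k-\ell}$, using the linear space-time estimates to convert the forcing into each component norm of $E_k$. Young's inequality for convolutions then gives $\|E_k^{\text{nl}}\|_{L^1_k}\lesssim \nu^{-\beta}\|E_k\|_{L^1_k}^2$ for some exponent $\beta$ determined by the $\nu$-weights, and combined with the linear bound $\|E_k^{\text{lin}}\|_{L^1_k}\lesssim \|u^{in}\|_{H^2}\le c\nu^{1/2}$ this yields a closed inequality of the form $M\le C\nu^{1/2}+C\nu^{-\beta}M^2$.

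The arithmetic that makes everything work is the matching of powers of $\nu$: the threshold $\gamma=\tfrac12$ is exactly the value for which the quadratic term $\nu^{-\beta}M^2$ is controlled by $M$ when $M\sim\nu^{1/2}$, i.e. one needs $\beta=\tfrac12$ so that $\nu^{-1/2}(\nu^{1/2})^2=\nu^{1/2}$ and the bootstrap constant stays small for $c$ small. I would therefore run a standard continuity/bootstrap argument: assume $\|E_k\|_{L^1_k}\le 2C\nu^{1/2}$ on a maximal time interval, feed this into the bilinear estimate, and conclude $\|E_k\|_{L^1_k}\le C\nu^{1/2}+\tfrac12\cdot 2C\nu^{1/2}=2C\nu^{1/2}$ provided $c$ is small enough, closing the argument on all of $[0,\infty)$.

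The main obstacle I anticipate is the bilinear estimate across different frequency bands, in particular when the output frequency $k$ and the input frequencies $\ell,k-\ell$ lie in different regimes (e.g.\ a product of two intermediate-frequency factors producing a low-frequency output, or interactions straddling the $10\nu$ and $1$ thresholds). Because the component norms and their $\nu$-weights differ between bands, I would need a careful case analysis to verify that in every interaction the gain in $\nu$-powers is at least $\nu^{-1/2}$ and no better (which would be wasteful) nor worse (which would break the threshold); recovering the velocity $u_k$ from $\om_k$ through the stream function, controlling the boundary-layer weight $(1-|y|)^{1/2}$ under multiplication, and handling the loss from the term $(u^2,0)$ in \eqref{pertu} near the boundary are the delicate points. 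The sharp constants from Wirtinger's inequality and the refined Airy estimates are precisely what guarantee the weights line up, so the heart of the proof is bookkeeping these exponents rather than any single inequality.
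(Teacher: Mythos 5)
Your proposal follows essentially the same route as the paper: the paper combines the linear space-time estimates on the three bands (Theorem \ref{Th xi small}, Theorem \ref{Th: linear problem}, and the $|k|\geq 1$ estimate \eqref{Th nuxi2leq1 k>1}) with a bilinear convolution bound $\|E_k\|_{L^1_k}\leq C\nu^{-\frac12}\|E_k\|^2_{L^1_k}+\|u^{in}_k\|_{L^1_kH^2_y}$ closed by a continuation argument, exactly matching your exponent bookkeeping $\beta=\frac12$. The cross-band case analysis you flag as the main obstacle is indeed the paper's nine-case table $I_{i,j}$ for $f_1$ (with the refined weight $\min\{\nu^{-\frac16}|k|^{\frac23},\nu^{-\frac12}\}$ rather than a flat $\nu^{-\frac12}$), and your anticipated use of the divergence-free structure and the boundary weight for the $f_2$ term is realized there via Hardy's inequality and $\partial_y u^2_l=-ilu^1_l$.
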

\begin{corollary}
Under the same conditions as Theorem \ref{Th: tran thre}, if we additionally have $u^{in}\in L^1_x (H^1_{0,y}\cap H^2_y)$, then there holds
\begin{align*}
\| E_k\|_{L^\infty_k(\R)} \leq C\nu^{\frac12}.
\end{align*}
\end{corollary}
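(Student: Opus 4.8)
The plan is to upgrade the $L^1_k$ bound of Theorem \ref{Th: tran thre} to the $L^\infty_k$ bound, by re-running essentially the same frequency-by-frequency estimates, the only genuinely new input being the behaviour of the $x$-Fourier transform on $L^1_x$. First I would record the elementary fact that the $x$-Fourier transform maps $L^1_x$ into $L^\infty_k$: for any $y$-norm $Y$,
\[
\sup_{k\in\R}\|u^{in}_k\|_{Y}=\sup_{k\in\R}\Big\|\int_\R u^{in}(x,\cdot)\,e^{-ikx}\,dx\Big\|_{Y}\leq \|u^{in}\|_{L^1_x(Y)}.
\]
Taking $Y=H^1_{0,y}\cap H^2_y$ and invoking the extra hypothesis yields a uniform-in-$k$ bound on the initial vorticity $\om^{in}_k$, which is precisely the data quantity entering the linear estimates.

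Second, I would revisit the space-time (resolvent) estimates underlying Theorem \ref{Th: tran thre}. These are performed at each fixed frequency $k$, so that for the linearized evolution they produce a bound of the schematic form $E_k\lesssim \|\om^{in}_k\|_{Y}+(\text{nonlinear forcing})_k$, uniformly in $k$ within each of the three regimes $0\leq|k|<10\nu$, $10\nu\leq|k|<1$ and $|k|\geq1$. Feeding in the $L^\infty_k$ data bound from the previous step therefore immediately controls the linear part of $\|E_k\|_{L^\infty_k}$ by $C\nu^{\frac12}$.

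Third comes the nonlinear term. The quadratic nonlinearities $u\cdot\na\om$ and $(u^2,0)$ become convolutions in $k$ after the $x$-Fourier transform, so I would estimate them by Young's inequality in the split form $\|f*g\|_{L^\infty_k}\leq\|f\|_{L^1_k}\|g\|_{L^\infty_k}$. Placing one factor in the $L^1_k$-controlled norm already bounded by $C\nu^{\frac12}$ in Theorem \ref{Th: tran thre}, and the other factor in the $L^\infty_k$ norm being estimated, the nonlinear contribution to $\|E_k\|_{L^\infty_k}$ is bounded by $C\nu^{\frac12}\|E_k\|_{L^\infty_k}$. A standard continuity/bootstrap argument, with $\nu_0$ chosen small, then absorbs this term into the left-hand side and gives $\|E_k\|_{L^\infty_k}\leq C\|u^{in}\|_{L^1_x(H^1_{0,y}\cap H^2_y)}\leq C\nu^{\frac12}$.

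The main obstacle I anticipate is the bookkeeping of the regime-dependent weights under convolution. The functional $E_k$ carries different $\nu$- and $k$-weights in the low-, intermediate- and high-frequency bands (for instance $\nu^{\frac14}|k|^{-\frac14}$ against $|k|^{\frac12}$), and a convolution $\int u_{k-k'}\,\partial\om_{k'}\,dk'$ may couple a factor from one band to a factor from another, including across the critical division points $|k|=10\nu$ and $|k|=1$. The delicate point is therefore to check that the product of the two weights appearing on the right always dominates the single weight on the left, uniformly across these band boundaries, so that the Young-type splitting genuinely closes; this is exactly where the specific choice of $E_k$ and the division point $10\nu$ must again be exploited, just as in the proof of Theorem \ref{Th: tran thre}.
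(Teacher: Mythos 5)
Your proposal is correct and is exactly the argument the paper intends (the corollary is stated without a separate proof): the elementary bound $\sup_k\|u^{in}_k\|_{H^2_y}\leq\|u^{in}\|_{L^1_x(H^1_{0,y}\cap H^2_y)}$ handles the data term, and since the nine-case analysis proving \eqref{f1kLk1L2tL2y} in fact establishes the \emph{pointwise} kernel bounds $\min\{\nu^{-\frac16}|k+l|^{\frac23},\nu^{-\frac12}\}\|u^1_l\om_k\|_{L^2_tL^2_y}\lesssim\nu^{-\frac12}E_lE_k$ (and likewise for $f_2$ via \eqref{f2kL1kL2tL2y}) on every band pair $I_{i,j}$, the weight bookkeeping you flag as the ``main obstacle'' requires no new work, and your Young splitting $\|f\ast g\|_{L^\infty_k}\leq\|f\|_{L^1_k}\|g\|_{L^\infty_k}$ closes the bootstrap with the already-proved $\|E_k\|_{L^1_k}\leq C\nu^{\frac12}$ absorbing the nonlinearity. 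The only caveat is your final step $\|u^{in}\|_{L^1_x(H^1_{0,y}\cap H^2_y)}\leq C\nu^{\frac12}$, which tacitly reads a smallness of this norm into the hypotheses (otherwise the constant must depend on it)---but this is the same gloss the paper's own statement makes.
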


As an application of space-time estimates in Theorems \ref{Th xi small} and \ref{Th: linear problem}, we discover that when the frequency range $|k|\geq \nu^{1-}$, the solution of \eqref{equ: omli11k} exhibits the enhanced dissipation effect.
\begin{theorem}\label{Th: linear problem enhance}
Let $\om_k$ be a solution of \eqref{equ: omli11k} with $\om^{in}_k\in H^1_y$. Then there exist constants $\epsilon_0>0$ suitably small and $0<\nu_0\leq 1$ so that for $0<\nu\leq \nu_0$, we have
\begin{align*}
\max\{1,|k|\}\|e^{\epsilon_0\la_{\nu} t}u_k\|_{L^\infty_t L^2_y}+\max\{\nu^{\frac12},|k|^{\frac12},|k|\}\|e^{\epsilon_0\la_{\nu} t}u_k\|_{L^2_t L^2_y} \leq \|\om^{in}_k\|_{H^1_y},
\end{align*}
where $u_k=(\pa_y (\pa^2_y-k^2)^{-1}\om_k,-ik(\pa^2_y-k^2)^{-1}\om_k)$ and $\la_{\nu}=\max\{\nu, (\nu |k|^2)^{\frac13}\}$.
\end{theorem}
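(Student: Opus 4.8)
The plan is to obtain the exponentially weighted bounds from the (unweighted) space-time estimates of Theorems~\ref{Th xi small} and~\ref{Th: linear problem} by exploiting the time-translation invariance of \eqref{equ: omli11k}, in the spirit of a Gearhart--Pr\"uss argument: the enhanced rate $\la_\nu=\max\{\nu,(\nu|k|^2)^{1/3}\}$ will appear as the reciprocal of the gain already encoded in those estimates. Since the coefficients in \eqref{equ: omli11k} are independent of $t$, I first set $w:=e^{\epsilon_0\la_\nu t}\om_k$, whose associated velocity is $v:=e^{\epsilon_0\la_\nu t}u_k$. Then $w$ solves the same problem with one additional zeroth-order term,
\begin{equation*}
\pa_t w-\nu(\pa_y^2-k^2)w+iky\,w=\epsilon_0\la_\nu\,w,
\end{equation*}
with the boundary conditions of \eqref{equ: omli11k} and initial datum $\om^{in}_k$. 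Since the target is precisely the unweighted space-time estimate for $v$, it suffices to run the space-time machinery for this forced equation and absorb the forcing $\epsilon_0\la_\nu w$.

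Next I would use the cited theorems in their full form, controlling both the vorticity norms and the velocity norms of the homogeneous evolution by $C\|\om^{in}_k\|_{H^1_y}$, together with their inhomogeneous (Duhamel/resolvent) counterpart: for the equation with right-hand side $f$, the same norms of the solution are bounded by $C\|\om^{in}_k\|_{H^1_y}+C\la_\nu^{-1}\|f\|_{*}$, where $\|\cdot\|_{*}$ is a vorticity-type norm dominated by the vorticity part of the left-hand side and the denominator $\la_\nu$ is exactly the regime-dependent rate ($\nu$ or $(\nu|k|^2)^{1/3}$). Applying this with $f=\epsilon_0\la_\nu w$ makes the forcing contribute $C\la_\nu^{-1}\cdot\epsilon_0\la_\nu=C\epsilon_0$ times a quantity already controlled by the left-hand side; choosing $\epsilon_0$ sufficiently small absorbs it and yields the weighted estimate for $v$.

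I would carry out the absorption separately on the three bands, since the balance differs. On $0\leq|k|<10\nu$ one has $\la_\nu=\nu$, and here the forcing $\epsilon_0\nu w$ is dominated by the plain viscous dissipation via Wirtinger's (Poincar\'e) inequality, so the step is elementary. On $10\nu\leq|k|<1$ and on $|k|\geq1$ one has $\la_\nu=(\nu|k|^2)^{1/3}\gg\nu$, so the forcing $\epsilon_0(\nu|k|^2)^{1/3}w$ is \emph{larger} than what the naive energy dissipation can absorb; here the absorption must draw on the genuine enhanced-dissipation content of the space-time estimates, in which the effective decay rate is $(\nu|k|^2)^{1/3}$ rather than $\nu$.

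This last point is where I expect the main difficulty to lie. In the intermediate band $10\nu\leq|k|<1$ the enhanced estimate is built on the sharp constant in Wirtinger's inequality and on the refined behaviour of the Airy function near the critical layer, with boundary-layer weights such as $(1-|y|)^{1/2}$; one must verify that inserting the time weight $e^{\epsilon_0\la_\nu t}$ --- equivalently, shifting the spectral parameter by $\epsilon_0\la_\nu$ --- is a genuinely lower-order perturbation that neither degrades those sharp constants nor generates uncontrolled boundary contributions. Once the shift is shown to preserve the $\la_\nu^{-1}$ gain in this band, the same absorption closes all three regimes and gives the stated bound with $\|\om^{in}_k\|_{H^1_y}$ on the right-hand side.
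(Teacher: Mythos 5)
Your conjugation step and the low-frequency band are fine: for $0\leq|k|<10\nu$ one has $\la_\nu=\nu$ and the forcing $\epsilon_0\nu w$ is absorbed by the viscous dissipation via Wirtinger's inequality, exactly as in the paper. The genuine gap is the absorption step in the bands $10\nu\leq|k|<1$ and $|k|\geq1$, which you correctly flag as the main difficulty but do not resolve — and in fact it \emph{cannot} be resolved in the form you propose. The inhomogeneous estimates of Theorems \ref{Th xi small}, \ref{Th: linear problem} and \eqref{Th nuxi2leq1 k>1} do not contain a bound of the form ``solution norms $\leq C\|\om^{in}\|_{H^1}+C\la_\nu^{-1}\|f\|_{*}$ with $\|\cdot\|_{*}$ dominated by the left-hand side.'' Writing the zeroth-order forcing as $\epsilon_0\la_\nu w=-ikf_1$ with $f_1=i\epsilon_0\la_\nu w/k$ and feeding it into Theorem \ref{Th: linear problem}, the forcing contributes
\begin{align*}
\nu^{-\frac13}|k|^{\frac43}\|f_1\|^2_{L^2L^2}=\epsilon_0^2\,\nu^{\frac13}|k|^{\frac23}\|w\|^2_{L^2L^2}
=\epsilon_0^2\,\la_\nu\|w\|^2_{L^2L^2},
\end{align*}
whereas the only unweighted vorticity dissipation on the left-hand side is $\nu^{\frac12}|k|^{\frac12}\|w\|^2_{L^2L^2}$. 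The ratio is $\epsilon_0^2(|k|/\nu)^{1/6}$, which is unbounded as $\nu/|k|\to 0$, so no uniform choice of $\epsilon_0$ closes the absorption; the deficit is precisely the boundary layer, since the left-hand side carries the full rate $(\nu k^2)^{1/3}$ only against the degenerate weight $\|\rho_k^{1/2}w\|^2_{L^2L^2}$, and nothing controls $\la_\nu\|w\|^2$ on the strips $1-\delta\leq|y|\leq1$. The same computation defeats the band $|k|\geq1$. In other words, the estimate you postulate is equivalent to a resolvent bound for the \emph{shifted} operator, which is the actual content to be proven, not a consequence of the unshifted space-time theorems.

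The paper closes this by building the shift into the resolvent analysis from the start: equations \eqref{equ: psiNa} and \eqref{equ: psij} carry the term $-\epsilon\nu^{1/3}|k|^{2/3}w$, and Propositions \ref{pro: reso esti FL2} and \ref{lemma:non-slip boundary,resolvent} are proved uniformly for $0\leq\epsilon\leq\epsilon_0$ (via the Airy estimates with $A_0(Ld+i\epsilon)$ in Lemmas \ref{lemma: W1W2}--\ref{lemma: Cij}). Then, instead of a physical-space Duhamel absorption, Section \ref{sec: enhanced dissipation} takes the weighted Laplace transform $\widetilde w(\la)=\int_0^\infty \om\,e^{\epsilon(\nu k^2)^{1/3}t-it\la}\,dt$, so that by Plancherel the weighted space-time norms become resolvent bounds for the shifted operator; the solution is decomposed as $\om^{(1)}+\om^{(2)}+\om^{(3)}$ as in Proposition \ref{Prop homo pro}, with the explicit free wave $\om^{(1)}=e^{-(\nu k^2)^{1/3}t-ikty}\om^{in}$ (which is where $\|\pa_y\om^{in}\|_{L^2}$, i.e.\ the $H^1_y$ hypothesis, enters), the forced Navier-slip part handled by Lemma \ref{Prop: inhomo enhance}, and the boundary corrector by Lemma \ref{claim} with the factor $e^{-(1-\epsilon)(\nu k^2)^{1/3}t}$. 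To repair your proposal you would have to prove the shifted resolvent estimates yourself, at which point you are reproducing the paper's argument rather than shortcutting it.
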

\begin{remark}
Actually, we can obtain the same enhanced dissipation estimates as shown in Theorem \ref{Th: linear problem enhance} for the Navier-slip boundary condition.
\end{remark}

 Now, we summarize the key contributions of our results:
\begin{itemize}
  \item The transition threshold stated in Theorem \ref{Th: tran thre} matches the known optimal result on $\T\times [-1,1]$ in \cite{Chen-Li-Wei-Zhang}, in which the authors only require to deal with the space-time estimates on high-frequency $|k|\geq 1$.
      Compared to $\T\times [-1,1]$,  the domain $\R\times [-1,1]$ exists extra difficulty on frequency range $0\leq |k|<1$. More precisely, we are unable to establish a uniform estimate on the this range. So we will adopt a ``divide and conquer" strategy to address the range $0\leq |k|\leq 1$ by the following observations:
\begin{itemize}
\item For the low-frequency $ 0\leq|k|< 10\nu$,  utilizing the sharp Sobolev constant from Wirtinger's inequality and the expansion argument in $L^2(-1,1)$, we identify the key point $\frac{\pi^3}{3}\nu\sim 10\nu$, which allows us to construct the space-time estimates in $0\leq|k|<10\nu$  by the energy method. To some extent, this case can be viewed as a perturbation result around $k=0$ and hence the range cannot be extended to   $0\leq|k|\leq 1$.
 \item For the intermediate-frequency $10\nu \leq |k|<1$, we explore a kind of asymptotic behavior of $A_0(z)$ produced by the Airy function to achieve the space-time estimates when $|k|\leq 1$. Compared to $|k|\geq1$ in \cite{Chen-Li-Wei-Zhang}, the new insight is that the maximized gain of $\sigma$ in Lemma \ref{A3} enables us to extend the lower bound of intermediate-frequencies to $10\nu$.
 \item We notice that the constant $10\nu$ is of inconsequence for the Navier-slip boundary condition. More precisely, we can use energy method and resolvent estimate to obtain the space-time estimate on the frequency range $0\leq |k|\leq 10\nu$  and $\nu\leq |k|<1$, respectively.
\end{itemize}
\item It should be pointed out that the intermediate frequency will disappear on $\T\times [-1,1]$ due to the constraint $k\in\Z$. For the domain $\R\times [-1,1]$, there are nine kinds of interactions between the low, intermediate and high frequencies of $u$ and $\om$. By employing a class of bilinear estimates and optimization argument, we detect the expected form on energy functional, and ultimately achieve transition threshold $\gamma \leq \frac12$.
\end{itemize}

\vspace{0.2cm}
\noindent \textbf{Outline:}
 To investigate the  nonlinear stability and enhanced dissipation, a key step is to derive the space-time estimates for the linearized Navier-Stokes equation
\begin{align}\label{equ: omkli}
\pa_t\om-\nu(\pa^2_y-k^2)\om+i k y\om=0, \qquad \om(0)=\om^{in}( k,y),
\end{align}
where $\om=(\pa^2_y - k^2)\psi$ with $\psi(\pm 1)=\psi'(\pm 1)=0$, $k\in \R$. In order to understand the linear stability, the standard method is to seek the solution of \eqref{equ: omkli} in the form
\begin{equation*}
w_k=\om_k(y)e^{-ikt\la},
\end{equation*}
where $w_k$ satisfies the following Orr-Sommerfeld equation
\begin{align}\label{OS eq}
-\nu(\pa_y^2-k^2) w_k+ik (y-\la) w_k=0.
\end{align}
If $\la\in \C$ , all the nontrivial solutions of \eqref{OS eq} with the non-slip or Navier-slip boundary conditions lie in $\im k\la<0$. Then, the Couette flow is linearly spectral stable.

Firstly, we investigate the resolvent estimates for the linearized operator
$$\mathcal{L}=-\nu(\pa^2_y-k^2)+ik y,$$
which is reduced to the study of the inhomogeneous Orr-Sommerfeld (OS) equation
\begin{equation}\label{equ: psij1}
\left\{
\begin{aligned}
&-\nu(\pa^2_y-k^2) w+i k(y-\la)w=F,\\
&w=(\pa^2_y-k^2)\phi,\\
&\phi(\pm 1)=0,\qquad \phi'(\pm 1)=0.
\end{aligned}
\right.
\end{equation}
We decompose the solution $w$ into three parts:
$$w=w_{Na}+c_1w_{1}+c_2w_{2},$$
where $w_{Na}$ is the solution of the inhomogeneous OS equation with the Navier-slip boundary condition and $w_1$, $w_2$ are the solutions of the homogeneous OS equation. The estimate of $w_{Na}$ are derived based on the resolvent estimates by multiplies methods in \cite{Li-Wei-Zhang,Li-Wei-Zhang1}. The estimates for $w_1$ and $w_2$ rely on the refined estimates of the Airy function provided in Section \ref{sec: reso esti}.

Secondly, we are devoted to establishing the space-time estimates.
We first divide the frequency $k\in \R$ into three parts:
\begin{itemize}
\item Low-frequency: $\{k: 0\leq |k|<10\nu\}$,
\item Intermediate-frequency: $\{k: 10\nu\leq |k|<1\}$,
\item High-frequency: $\{k:  |k|\geq 1\}$.
\end{itemize}

For the low-frequency, decomposing the stream function through the $L^2$ basis and using the sharp Sobolev constant from Wirtinger's inequality,  we obtain the space-time estimates for the maximum frequency range $0\leq |k|<\frac{\pi^3}{3}\nu$ by the energy method.

For the intermediate-frequency, we reformulate the solution $\om$ in terms of homogeneous and inhomogeneous equations, and then derive the space-time estimates based on the resolvent estimates in Section \ref{sec: reso esti}. To extend the lower bound of the intermediate-frequency to $10\nu$, it requires a series of estimates on the Airy function.

Thirdly, by making use of the space-time estimates  together with a delicate analysis on the interactions between the low, intermediate and high frequencies of $u$ and $\om$, we establish the quantitative stability for the 2D Couette flow on the infinite channel $\R\times [-1,1]$ with non-slip boundary condition. As an application of the space-time estimates, we establish the enhanced dissipation estimates on the frequency range $|k|\geq \nu^{1-}$.

\vspace{0.2cm}

\noindent{\bf Notations.} Throughout this paper, $C$ denotes a general constant independent of $\nu, k, \la$, and it may vary from line to line. The notation $A\lesssim B$ means $A\leq CB$.
In the following, we will omit the subscript $k$ for $\om, w, \psi, \phi, F$ while still keep the dependence on $k$ in the actual estimates.

\section{The Resolvent Estimates of the Linear Operator}\label{sec: reso esti}
In this section, we study the linear operator
$$\mathcal{L}=-\nu(\pa^2_y-k^2)+ik y$$
and establish the resolvent estimates of $\mathcal{L}$ with the Navier-slip boundary condition in Subsection \ref{sec: navier-slip} and non-slip boundary condition in Subsection \ref{sec: Non slip}, respectively.
\subsection{The resolvent estimates with the Navier-slip boundary condition}\label{sec: navier-slip}
Consider the equation
\begin{equation}\label{equ: psiNa}
\left\{
\begin{aligned}
&-\nu(\pa^2_y-k^2)w +ik(y-\la)w -\epsilon\nu^{\frac13}|k|^{\frac23}w =F ,\\
&w (\pm 1)=0,
\end{aligned}
\right.
\end{equation}
where $\la\in \mathbb{R}$ and $\epsilon\geq 0$ is sufficiently small and independent of $\nu, k, \la$. We show the following resolvent estimates under the Navier-slip boundary condition with intermediate-frequency $10\nu\leq |k|<1$.
\begin{proposition}\label{pro: reso esti FL2}
Let $w$ be a solution of \eqref{equ: psiNa} with $\la\in \R$, $10\nu\leq |k|<1$.  Denote $u=(\pa_y(\pa^2_y-k^2)^{-1}w,-ik(\pa^2_y-k^2)^{-1}w)$. If $F\in L^2(I)$, there exists $\epsilon_0<1$ such that for $0\leq \epsilon \leq \epsilon_0$, we have
\begin{equation}
\begin{aligned}\label{lemma: wL2FL2}
\nu^{\frac16}|k|^{\frac56}\|u\|_{L^2}+\nu^{\frac16}|k|^{\frac56}\|w\|_{L^1}+\nu^{\frac23}|k|^{\frac13}\|w'\|_{L^2}&\\
+(\nu|k|^2)^{\frac13}\|w\|_{L^2}+|k|\|(y-\la)w\|_{L^2}&\leq C\|F\|_{L^2}.
\end{aligned}
\end{equation}
If $F\in H^{-1}(I)$, there exists $\epsilon_1<1$ such that for $0\leq \epsilon \leq \epsilon_1$, we have
\begin{align}\label{lemma: wL2,FH-1}
\nu^{\frac12} |k|^{\frac12}\|u \|_{L^2}+\nu \|w'\|_{L^2}+\nu^{\frac23}|k|^{\frac13}\|w \|_{L^2}\leq C\|F \|_{H^{-1}}.
\end{align}
\end{proposition}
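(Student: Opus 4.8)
The plan is to follow the multiplier (energy) method of \cite{Li-Wei-Zhang,Li-Wei-Zhang1}, carrying the term $-\epsilon\nu^{\frac13}|k|^{\frac23}w$ along as a small spectral shift to be absorbed at the very end. Write $g=y-\la$ and let $\langle\cdot,\cdot\rangle$ be the $L^2(I)$ inner product. First I would record two multiplier identities, obtained by pairing the equation with $\bar w$ and with $\overline{gw}$ and integrating by parts, all boundary terms vanishing since $w(\pm1)=0$. Pairing with $\bar w$ and taking real parts gives
\begin{align*}
\nu\|w'\|_{L^2}^2+\nu k^2\|w\|_{L^2}^2-\epsilon\nu^{\frac13}|k|^{\frac23}\|w\|_{L^2}^2=\re\langle F,w\rangle ,
\end{align*}
while pairing with $\overline{gw}$ and taking imaginary parts gives, after one integration by parts,
\begin{align*}
|k|\,\|gw\|_{L^2}^2\le \|F\|_{L^2}\|gw\|_{L^2}+\nu\|w'\|_{L^2}\|w\|_{L^2}.
\end{align*}
These link $\|w'\|_{L^2}$, $\|w\|_{L^2}$ and $\|gw\|_{L^2}$, but do not yet produce the enhanced dissipation, because the viscous gain $\nu k^2$ is far below the target coercivity $(\nu k^2)^{\frac23}$.

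The crux is the Airy-type resolvent bound $(\nu k^2)^{\frac13}\|w\|_{L^2}\lesssim\|F\|_{L^2}$, which I would extract by a critical-layer interpolation. Splitting $\|w\|_{L^2}^2=\int_{|g|<\rho}|w|^2+\int_{|g|\ge\rho}|w|^2$ at a free scale $\rho>0$, estimating the inner part by the one-dimensional inequality $\int_{|g|<\rho}|w|^2\lesssim\rho\|w\|_{L^2}\|w'\|_{L^2}$ and the outer part by $\rho^{-2}\|gw\|_{L^2}^2$, and optimizing over $\rho$, yields the clean relation
\begin{align*}
\|w\|_{L^2}^2\lesssim \|gw\|_{L^2}\,\|w'\|_{L^2}.
\end{align*}
Substituting the two identities above (which give $\nu\|w'\|_{L^2}^2\lesssim\|F\|_{L^2}\|w\|_{L^2}$ and a bound $\|gw\|_{L^2}\lesssim|k|^{-1}\|F\|_{L^2}+|k|^{-\frac12}\nu^{\frac14}\|F\|_{L^2}^{\frac14}\|w\|_{L^2}^{\frac34}$, modulo the $\epsilon$-term) and solving the resulting algebraic inequality in $\|w\|_{L^2}$ closes in every case with exactly $\|w\|_{L^2}\lesssim(\nu k^2)^{-\frac13}\|F\|_{L^2}$. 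The $\epsilon$-term is then absorbed: since $\epsilon\nu^{\frac13}|k|^{\frac23}=\epsilon(\nu k^2)^{\frac13}$ sits precisely at the coercivity scale just produced, taking $\epsilon_0$ below the implicit constant moves all $\epsilon$-contributions to the left. Making this bootstrap close uniformly on the whole window $10\nu\le|k|<1$ is the step I expect to be the main obstacle, together with the bookkeeping when the critical layer $\{|g|\lesssim(\nu/|k|)^{\frac13}\}$ meets the boundary $y=\pm1$; the condition $w(\pm1)=0$ at least keeps every integration by parts clean regardless of the location of $\la$.

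Once the resolvent bound is in hand, the remaining $L^2$-data quantities follow. The real-part identity gives $\nu^{\frac23}|k|^{\frac13}\|w'\|_{L^2}\lesssim\|F\|_{L^2}$, and the $\overline{gw}$-identity then gives $|k|\|gw\|_{L^2}\lesssim\|F\|_{L^2}$. For the $L^1$ bound I reuse the split to get $\|w\|_{L^1}\lesssim\big(\|w\|_{L^2}\|gw\|_{L^2}\big)^{\frac12}$, which with the two bounds just proved equals $\nu^{-\frac16}|k|^{-\frac56}\|F\|_{L^2}$, i.e.\ $\nu^{\frac16}|k|^{\frac56}\|w\|_{L^1}\lesssim\|F\|_{L^2}$. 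Finally the velocity bound uses $\phi=(\pa_y^2-k^2)^{-1}w$ with $\phi(\pm1)=0$: since $0<|k|<1$ the Green's function of $\pa_y^2-k^2$ and its $y$-derivative are uniformly bounded, so $\|u\|_{L^\infty}\lesssim\|w\|_{L^1}$ and hence $\nu^{\frac16}|k|^{\frac56}\|u\|_{L^2}\lesssim\|F\|_{L^2}$.

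For the $H^{-1}$ estimate \eqref{lemma: wL2,FH-1} I would rerun the same identities, replacing each $\langle F,\cdot\rangle$ by the duality pairing ($|\langle F,w\rangle|\le\|F\|_{H^{-1}}\|w\|_{H^1}$, $|\langle F,gw\rangle|\le\|F\|_{H^{-1}}\|gw\|_{H^1}$) and absorbing the resulting $\|w'\|_{L^2}$, $\|w\|_{L^2}$ factors by Young's inequality; this yields $\nu^{\frac23}|k|^{\frac13}\|w\|_{L^2}+\nu\|w'\|_{L^2}\lesssim\|F\|_{H^{-1}}$. Equivalently these two bounds follow by duality from the $L^2$ estimates applied to the adjoint $\mathcal{L}^{*}=-\nu(\pa_y^2-k^2)-ik(y-\la)-\epsilon\nu^{\frac13}|k|^{\frac23}$, which obeys the identical estimates. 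The velocity term $\nu^{\frac12}|k|^{\frac12}\|u\|_{L^2}$ is the subtlest here, since the crude bound $\|u\|_{L^2}\lesssim\|w\|_{L^1}$ is no longer sharp once $w$ is controlled only through $H^{-1}$ data; for it I would test the stream-function formulation against $\bar\phi$, using $\|u\|_{L^2}^2=-\langle w,\phi\rangle$ and $|\langle F,\phi\rangle|\le\|F\|_{H^{-1}}\|\phi\|_{H^1}\lesssim\|F\|_{H^{-1}}\|u\|_{L^2}$, combined with the weighted bound on $\|gw\|_{L^2}$. The same smallness, now $\epsilon\le\epsilon_1$, absorbs the shift term throughout.
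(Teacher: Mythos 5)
Your treatment of the $L^2$-data estimate \eqref{lemma: wL2FL2} is sound and is essentially the paper's own route: the paper likewise runs the multiplier scheme of \cite{Li-Wei-Zhang,Li-Wei-Zhang1}, deferring the bulk of the work to Proposition 3.1--Lemma 3.5 of \cite{Chen-Li-Wei-Zhang}, treats the $\epsilon$-shift as a forcing absorbed at the coercivity scale $(\nu k^2)^{\frac13}$, and its one new point --- dropping a $|k|^{\frac12}$ from the velocity terms because for $|k|<1$ one has $\|\phi\|_{L^2}\lesssim\|\pa_y\phi\|_{L^2}$, i.e.\ $\|\pa_y\phi\|_{L^2}\|\phi\|_{L^2}\leq\|u\|_{L^2}^2$ --- is the same observation as your Green's-function bound $\|u\|_{L^\infty}\lesssim\|w\|_{L^1}$ (compare \eqref{esti: psiL2omL2}). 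Your algebra does close: $\|w\|_{L^2}^2\lesssim\|gw\|_{L^2}\|w'\|_{L^2}$ combined with $\nu\|w'\|_{L^2}^2\lesssim\|F\|_{L^2}\|w\|_{L^2}$ and the $gw$-identity yields $(\nu k^2)^{\frac13}\|w\|_{L^2}\lesssim\|F\|_{L^2}$ in both branches, uniformly on $10\nu\leq|k|<1$, and the $L^1$, weighted and velocity bounds then follow as you indicate.

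The genuine gap is in \eqref{lemma: wL2,FH-1}, and specifically in the velocity term. First, note your ``rerun with duality pairings'' is lossy for $\nu^{\frac23}|k|^{\frac13}\|w\|_{L^2}$: in $|\langle F,gw\rangle|\leq\|F\|_{H^{-1}}\|gw\|_{H^1}$ the contribution $\|F\|_{H^{-1}}\|gw'\|_{L^2}\lesssim\|F\|_{H^{-1}}\|w'\|_{L^2}\lesssim\nu^{-1}\|F\|_{H^{-1}}^2$ only gives $\|gw\|_{L^2}\lesssim(\nu|k|)^{-\frac12}\|F\|_{H^{-1}}$, and then $\|w\|_{L^2}^2\lesssim\|gw\|_{L^2}\|w'\|_{L^2}$ closes at $\|w\|_{L^2}\lesssim\nu^{-\frac34}|k|^{-\frac14}\|F\|_{H^{-1}}$, missing the target $\nu^{-\frac23}|k|^{-\frac13}$ by the factor $(|k|/\nu)^{\frac{1}{12}}\geq 10^{\frac{1}{12}}$; your duality fallback does rescue this term, since $\langle w,G\rangle=\langle F,(\mathcal{L}^{*})^{-1}G\rangle$ and \eqref{lemma: wL2FL2} applied to the adjoint bounds $\|((\mathcal{L}^{*})^{-1}G)'\|_{L^2}\lesssim\nu^{-\frac23}|k|^{-\frac13}\|G\|_{L^2}$. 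But $\nu^{\frac12}|k|^{\frac12}\|u\|_{L^2}\leq C\|F\|_{H^{-1}}$ is established by nothing you wrote. Testing against $\bar\phi$ produces in the imaginary part only the signed quantity $|k|\int_{-1}^{1}(y-\la)\bigl(|\phi'|^2+k^2|\phi|^2\bigr)dy\leq\|F\|_{H^{-1}}\|u\|_{L^2}$, which has no coercivity once the critical layer sits inside, $\la\in(-1,1)$ (it works only for $|\la|$ large); closing instead via $\|u\|_{L^2}^2=-\langle w,\phi\rangle\lesssim(\|w\|_{L^2}\|gw\|_{L^2})^{\frac12}\|u\|_{L^2}$ would require $\|gw\|_{L^2}\lesssim\nu^{-\frac13}|k|^{-\frac23}\|F\|_{H^{-1}}$, which your identities miss by $(|k|/\nu)^{\frac16}$ because of the same $\|gw'\|_{L^2}$ term; and duality cannot reduce it to \eqref{lemma: wL2FL2}, since dualizing $\langle u,G\rangle$ leads back to the adjoint problem with stream-function ($H^1$-type) data, so the velocity estimate is essentially self-dual. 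This is precisely the step for which the paper, through Lemma 3.5 of \cite{Chen-Li-Wei-Zhang}, invokes the dedicated weak-resolvent multiplier machinery built from a cut-off of the form \eqref{fun: chi} adapted to the critical layer (compare Proposition \ref{lemma: weak resolvent estimate}); without such a multiplier your outline does not close on $10\nu\leq|k|<1$.
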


\textbf{Sketch of the proof:} The main difficulty in deriving the resolvent estimate stems from the fact that the operator $\mathcal{L}$ may not be invertible at $y=0$. By a scaling analysis with $y-\la\sim \delta$, we have
\begin{align*}
\nu \delta^{-2} w\sim k\delta w\sim k(y-\la)w\sim F,
\end{align*}
which yields $\delta\sim(\nu k^{-1})^{\frac13}$. Noting that for $10\nu\leq |k|<1$, the fact $\delta<1$ still holds true. By a similar proof of Proposition 3.1--Lemma 3.5 in \cite{Chen-Li-Wei-Zhang}, we can obtain
\begin{align*}
&\nu^{\frac16}|k|^{\frac56}\|w\|_{L^1}+\nu^{\frac23}|k|^{\frac13}\|w'\|_{L^2}
+(\nu|k|^2)^{\frac13}\|w\|_{L^2}+|k|\|(y-\la)w\|_{L^2}\leq C\|F\|_{L^2},\\
&\nu \|w'\|_{L^2}+\nu^{\frac23}|k|^{\frac13}\|w \|_{L^2}\leq C\|F \|_{H^{-1}},
\end{align*}

The first term in \eqref{lemma: wL2FL2} and \eqref{lemma: wL2,FH-1} does not share the same scale as the others. Denote
\begin{align*}
\phi=(\pa^2_y-k^2)^{-1}w.
\end{align*}
Thanks to $\|u\|_{L^2}=\|\pa_y\phi\|_{L^2}+\|k\phi\|_{L^2}$, it is easy to check that $\|\pa_y \phi\|_{L^2}$ is the main part of $\|u\|_{L^2}$ when $10\nu\leq |k|<1$. Consequently, in contrast to \cite{Chen-Li-Wei-Zhang}, we can eliminate $|k|^{\frac12}$-factor for the terms including $\|u\|_{L^2}$ by employing the inequality $ \|\pa_y\phi\|_{L^2}\|\phi\|_{L^2}\leq \|u\|^2_{L^2}$, rather than $ \|\pa_y\phi\|_{L^2}\|k\phi\|_{L^2}\leq \|u\|^2_{L^2}$ used in \cite{Chen-Li-Wei-Zhang}.

\vspace{0.2cm}
In a similar way, we can extend the weak resolvent estimate in \cite{Chen-Li-Wei-Zhang} to the frequency range $10\nu\leq |k|<1$, which plays an important role in deriving the resolvent estimate under the non-slip boundary condition in Proposition \ref{lemma:non-slip boundary,resolvent}.

\begin{proposition}\label{lemma: weak resolvent estimate}
Let $w$ be as in Proposition \ref{pro: reso esti FL2}. For $10\nu\leq |k|<1$, $f\in H^1(I)$ and $f(-j)=0$ with $j\in\{\pm 1\}$, there holds
\begin{align*}
|\lan w,f\ran|\leq &C| k|^{-1}\|F\|_{H^{-1}}\Big(\delta^{-\frac32}\|f\|_{L^\infty((-1,1)\cap(\la-\delta,\la+\delta))}\\
&+|f(j)|(|j-\la|+\delta)^{-\frac34}\delta^{-\frac34} +\|f\chi\|_{H^1}+\delta^{-1}\|f\chi\|_{L^2} \Big),
\end{align*}
where $\delta=\nu^{\frac13}|k|^{-\frac13}$ and $\chi$ is a cut-off function defined by
\begin{equation}\label{fun: chi}
\chi(y)=\left\{
\begin{aligned}
&\frac{1}{y-\la}, \qquad  y\in(-1,1)\cap [(-1,\la-\delta)\cup (\la+\delta,1)],\\
&2\frac{y-\la}{\delta^2}-\frac{(y-\la)^3}{\delta^4} \qquad y\in (-1,1)\cap(\la-\delta,\la+\delta).
\end{aligned}
\right.
\end{equation}
\end{proposition}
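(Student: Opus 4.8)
The plan is to test the equation \eqref{equ: psiNa} against the weighted test function $\chi f$ and rearrange so that $\langle w,f\rangle$ is isolated, reducing the estimate to the strong resolvent bounds of Proposition \ref{pro: reso esti FL2} together with the refined Airy estimates of Section \ref{sec: reso esti}. Concretely, since $\chi\in W^{1,\infty}(I)$ and $f\in H^1(I)$, the product $\chi f\in H^1(I)$ is admissible; pairing \eqref{equ: psiNa} with $\chi f$ and using that $(y-\la)\chi\equiv 1$ on $I\cap[(-1,\la-\delta)\cup(\la+\delta,1)]$ by \eqref{fun: chi}, a direct rearrangement gives, with $\delta=\nu^{\frac13}|k|^{-\frac13}$,
\begin{align*}
\langle w,f\rangle=\frac{1}{ik}\Big(\langle F,\chi f\rangle+\nu\langle(\pa_y^2-k^2)w,\chi f\rangle+\epsilon\nu^{\frac13}|k|^{\frac23}\langle w,\chi f\rangle\Big)+\int_{I\cap(\la-\delta,\la+\delta)}\big(1-(y-\la)\chi\big)\,w f\,dy.
\end{align*}
The point of the cutoff is precisely this: away from the critical layer the singular factor $(y-\la)^{-1}$ is absorbed into $\chi f$, while the remainder integral is supported in the layer $(\la-\delta,\la+\delta)$, where $1-(y-\la)\chi$ is bounded and vanishes at the endpoints by the $C^1$-matching built into \eqref{fun: chi}.

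The four contributions are then estimated separately, all uniformly in the range $10\nu\leq|k|<1$ (so that $\delta<1$ and the layer genuinely lies inside $I$). For the layer remainder, Cauchy--Schwarz and the bound $|1-(y-\la)\chi|\leq C$ give $\big|\int_{I\cap(\la-\delta,\la+\delta)}(1-(y-\la)\chi)wf\big|\leq C\delta^{\frac12}\|w\|_{L^2}\|f\|_{L^\infty(I\cap(\la-\delta,\la+\delta))}$; inserting $\nu^{\frac23}|k|^{\frac13}\|w\|_{L^2}\leq C\|F\|_{H^{-1}}$ from \eqref{lemma: wL2,FH-1} and using $\delta^{\frac12}\nu^{-\frac23}|k|^{-\frac13}=|k|^{-1}\delta^{-\frac32}$ produces the first term $|k|^{-1}\delta^{-\frac32}\|f\|_{L^\infty}$. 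For the $F$-pairing, $|k|^{-1}|\langle F,\chi f\rangle|\leq C|k|^{-1}\|F\|_{H^{-1}}\|f\chi\|_{H^1}$ gives the $\|f\chi\|_{H^1}$ term. For the dissipation term, integrating by parts once yields an interior piece $-\nu(ik)^{-1}\langle w',(\chi f)'\rangle$, controlled by $|k|^{-1}\|f\chi\|_{H^1}\cdot\nu\|w'\|_{L^2}\leq C|k|^{-1}\|F\|_{H^{-1}}\|f\chi\|_{H^1}$ again via \eqref{lemma: wL2,FH-1}, together with the lower-order piece $\nu|k|\,|\langle w,\chi f\rangle|$, which is dominated by the $\delta^{-1}\|f\chi\|_{L^2}$ term since $\nu^{\frac13}|k|^{\frac23}\leq\nu^{-\frac13}|k|^{-\frac23}$. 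The $\epsilon$-term obeys $\epsilon\nu^{\frac13}|k|^{-\frac13}\|w\|_{L^2}\|f\chi\|_{L^2}\leq C|k|^{-1}\delta^{-1}\|F\|_{H^{-1}}\|f\chi\|_{L^2}$ by \eqref{lemma: wL2,FH-1}, which accounts exactly for the last term.

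The remaining, and decisive, contribution is the boundary term from the integration by parts in the dissipation integral, namely $\nu(ik)^{-1}[\chi f\,w']_{-1}^{1}$. Here the hypothesis $f(-j)=0$ kills the flux at $y=-j$, leaving only the flux at $y=j$, which equals $\nu(ik)^{-1}(j-\la)^{-1}f(j)\,w'(j)$ since $\chi(j)=(j-\la)^{-1}$. The hard part is to bound the boundary derivative $w'(j)$: this is exactly where the refined asymptotics of the Airy function $A_0$ from Section \ref{sec: reso esti} (and the gain of $\sigma$ recorded in Lemma \ref{A3}) enter, producing a flux bound that scales like $\nu^{-1}(|j-\la|+\delta)(|j-\la|+\delta)^{-\frac34}\delta^{-\frac34}\|F\|_{H^{-1}}$, the factor $(|j-\la|+\delta)$ measuring the distance from the critical layer to the boundary. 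Combining this with $\chi(j)=(j-\la)^{-1}$ and the prefactor $\nu|k|^{-1}$ yields the boundary term $|k|^{-1}|f(j)|(|j-\la|+\delta)^{-\frac34}\delta^{-\frac34}\|F\|_{H^{-1}}$, completing the estimate. The overall structure mirrors the weak resolvent estimate of \cite{Chen-Li-Wei-Zhang}; the new content is that every scaling above, and in particular the Airy-based boundary flux bound, remains valid uniformly down to the lowered threshold $|k|=10\nu$ rather than only for $|k|\geq1$, which is precisely what the refined interval estimates of Section \ref{sec: reso esti} are designed to guarantee.
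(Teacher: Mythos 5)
Your reduction is structurally the same argument the paper has in mind: the paper gives no self-contained proof of Proposition \ref{lemma: weak resolvent estimate}, deferring to the weak resolvent estimate of \cite{Chen-Li-Wei-Zhang} with the remark that it extends to $10\nu\leq|k|<1$, and that argument is exactly your decomposition $f=(y-\la)\chi f+(1-(y-\la)\chi)f$, the substitution $ik(y-\la)w=F+\nu(\pa_y^2-k^2)w+\epsilon\nu^{\frac13}|k|^{\frac23}w$ from \eqref{equ: psiNa}, one integration by parts in the dissipation term, and the strong bounds \eqref{lemma: wL2,FH-1}. Your routine scalings all check: $\delta^{\frac12}\nu^{-\frac23}|k|^{-\frac13}=|k|^{-1}\delta^{-\frac32}$ for the layer remainder, the $C^1$-matching of $\chi$ at $y=\la\pm\delta$ with $0\leq 1-(y-\la)\chi\leq 1$ on the layer, the absorption $\nu^{\frac13}|k|^{\frac23}\leq\nu^{-\frac13}|k|^{-\frac23}$ for the $\nu k^2$ piece, the $\epsilon$-term landing exactly on $|k|^{-1}\delta^{-1}\|F\|_{H^{-1}}\|f\chi\|_{L^2}$, and $|\chi(j)|(|j-\la|+\delta)\leq C$ in both regimes of $j$ relative to the layer.

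The genuine gap is the one step you leave unproved, which is precisely the nontrivial content of the proposition: the boundary flux bound $\nu|w'(j)|\leq C(|j-\la|+\delta)^{\frac14}\delta^{-\frac34}\|F\|_{H^{-1}}$. Your asserted scaling is the correct target (it is what produces the term $|f(j)|(|j-\la|+\delta)^{-\frac34}\delta^{-\frac34}$), but your proposed mechanism is wrong: the refined asymptotics of $A_0$ and the gain of $\sigma$ in Lemma \ref{A3} are used in this paper only for the \emph{homogeneous} non-slip correctors $W_1,W_2$ and the coefficients $C_{ij}$ (Lemmas \ref{lemma: W1W2}--\ref{lemma: Cij}); the function $w$ in Proposition \ref{lemma: weak resolvent estimate} solves the \emph{inhomogeneous} problem \eqref{equ: psiNa} with $w(\pm1)=0$ and general $F\in H^{-1}$, so $w'(j)$ cannot be read off from $A_0$ asymptotics without constructing and estimating the full Green's function, which neither your sketch nor this paper carries out. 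In \cite{Chen-Li-Wei-Zhang}, on which the proposition is modelled, the analogous flux estimate is obtained instead by a direct multiplier/test-function argument, pairing the equation with a cut-off localized at scale $\delta$ near $y=j$ and re-using the strong resolvent bounds of Proposition \ref{pro: reso esti FL2}; verifying that this argument stays uniform down to $|k|=10\nu$ is the actual content to be checked, and your proof is incomplete without it. Two smaller points in the same step: for $F\in H^{-1}$ only, $w''\in H^{-1}$ and the pointwise trace $w'(j)$ is not classically defined (one should define the flux through the equation, e.g.\ writing $F=g_1+\pa_y g_2$ so that $\nu w'+g_2\in H^1$, or argue by density); and the duality bound $|\lan F,\chi f\ran|\leq\|F\|_{H^{-1}}\|\chi f\|_{H^1}$ requires $\chi f$ to be admissible for the chosen definition of $H^{-1}$ even though $\chi f$ need not vanish at $y=j$ — a point glossed equally by the paper, so minor, but worth a sentence.
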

\subsection{Resolvent estimates with non-slip boundary condition}\label{sec: Non slip}
 Denote
\begin{align*}
w=(\pa^2_y-k^2)\phi, \quad u=(\pa_y\phi, -ik\phi).
\end{align*}
The stream function satisfies
\begin{equation}\label{equ: psij}
\left\{
\begin{aligned}
&-\nu(\pa^2_y-k^2)^2\phi+i k(y-\la)(\pa^2_y-k^2)\phi-\epsilon\nu^{\frac13}| k|^{\frac23}(\pa^2_y-k^2)\phi=F,\\
&\phi(\pm 1)=0,\qquad \phi'(\pm 1)=0,
\end{aligned}
\right.
\end{equation}
where $\la\in \mathbb{R}$ and $\epsilon\geq 0$ is sufficiently small and independent of $\nu, k, \la$.
Define
\begin{equation}\label{def:rhok}
\rho_k(y)=\left\{
\begin{aligned}
&\quad 1, \qquad\quad  |y|\leq 1-\delta, \\
&\frac{1-|y|}{\delta},\quad 1-\delta\leq |y|\leq1,
\end{aligned}
\right. \qquad \delta=\nu^{\frac13}|k|^{-\frac13}.
\end{equation}
Now, we give the resolvent estimate with the non-slip boundary condition.
\begin{proposition}\label{lemma:non-slip boundary,resolvent}
Let $\phi$ be a solution of \eqref{equ: psij} with $10\nu\leq |k|<1$. If $F\in L^2(I)$, then we have
\begin{align}\label{wL1+wL2}
\nu^{\frac13}|k|^{\frac23}\|\rho^{\frac12}_k w\|_{L^2}+\nu^{\frac16}|k|^{\frac56}\|u\|_{L^2}+\nu^{\frac{5}{12}}|k|^{\frac{7}{12}}\|w\|_{L^2}\leq C\|F\|_{L^2}.
\end{align}
If $F\in H^{-1}$, then we have
\begin{align}\label{wL2+rhoL2+uL2}
\nu^{\frac23}|k|^{\frac13}\|\rho^{\frac12}_k w\|_{L^2}+\nu^{\frac12}|k|^{\frac12}\|u\|_{L^2}+\nu^{\frac34}|k|^{\frac14}\|w\|_{L^2}\leq C\|F\|_{H^{-1}}.
\end{align}
\end{proposition}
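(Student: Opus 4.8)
The plan is to reduce the non-slip problem to the Navier-slip resolvent estimate of Proposition \ref{pro: reso esti FL2} via a boundary-corrector decomposition. Write $\phi = \phi_{Na} + c_1\phi_1 + c_2\phi_2$, where $\phi_{Na}$ solves \eqref{equ: psij} with the Navier-slip condition $\phi_{Na}(\pm1)=w_{Na}(\pm1)=0$ (so that $w_{Na}=(\pa_y^2-k^2)\phi_{Na}$ is exactly the object estimated in Proposition \ref{pro: reso esti FL2}), and $\phi_1,\phi_2$ are homogeneous correctors: each $w_j=(\pa_y^2-k^2)\phi_j$ solves the homogeneous Orr-Sommerfeld equation $-\nu(\pa_y^2-k^2)w_j+ik(y-\la)w_j-\epsilon\nu^{1/3}|k|^{2/3}w_j=0$, with $\phi_j=(\pa_y^2-k^2)^{-1}w_j$ taken to satisfy $\phi_j(\pm1)=0$. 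With this choice $\phi(\pm1)=0$ holds automatically, and the two free constants $c_1,c_2$ are fixed by imposing the remaining non-slip conditions $\phi'(\pm1)=0$, i.e. by solving the $2\times2$ linear system $c_1\phi_1'(j)+c_2\phi_2'(j)=-\phi_{Na}'(j)$, $j\in\{\pm1\}$.

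First I would record the Navier-slip contribution: Proposition \ref{pro: reso esti FL2} directly supplies the bounds for $\|u_{Na}\|_{L^2}$ and $\|w_{Na}\|_{L^2}$ with the stated weights, and since $\rho_k\le1$ the term $\|\rho_k^{1/2}w_{Na}\|_{L^2}$ is controlled a fortiori. The right-hand side of the coefficient system requires bounding the boundary traces $\phi_{Na}'(j)$. I would express $\phi_{Na}'(j)=\langle w_{Na},G_j\rangle$, where $G_j$ is the $y$-derivative at $y=j$ of the Dirichlet Green's function of $\pa_y^2-k^2$ on $I$; explicitly $G_j$ is proportional to the $\sinh$-type homogeneous solution vanishing at the opposite endpoint $-j$, so that $G_j(-j)=0$ and $G_j$ is admissible in the weak resolvent estimate of Proposition \ref{lemma: weak resolvent estimate}. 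Applying that estimate with $f=G_j$ converts $|\phi_{Na}'(j)|$ into a bound in terms of $\|F\|_{H^{-1}}$ (or $\|F\|_{L^2}$) and the $\delta$-dependent boundary factors, with $\delta=\nu^{1/3}|k|^{-1/3}$.

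The homogeneous correctors are where the Airy analysis enters. Up to the $k^2$- and $\epsilon$-corrections, each $w_j$ is an Airy function in the rescaled variable $z=(k/\nu)^{1/3}e^{i\pi/6}(y-\la)$, decaying away from the turning point $y=\la$ and concentrating in a boundary layer of width $\delta$ near $y=\pm1$. Using the refined Airy estimates of Section \ref{sec: reso esti} (in particular the asymptotics of $A_0$ and the maximized $\sigma$-gain of Lemma \ref{A3}), I would compute the leading behaviour of $\phi_j'(\pm1)$, establish a lower bound for the determinant of the coefficient matrix (the delicate point, since the two correctors nearly align when $\la$ approaches a boundary), and thereby bound $|c_1|,|c_2|$ by the already-controlled traces $|\phi_{Na}'(\pm1)|$. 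The weighted norms $\|\rho_k^{1/2}w_j\|_{L^2}$, $\|u_j\|_{L^2}$, $\|w_j\|_{L^2}$ are then read off from the Airy profiles; the degenerate weight $\rho_k^{1/2}$ is tailored precisely to the layer of width $\delta$ where $w_1,w_2$ concentrate, and it is this degeneration (a factor $\delta^{1/4}$) that accounts for the better power of $\nu,|k|$ carried by $\|\rho_k^{1/2}w\|_{L^2}$ relative to $\|w\|_{L^2}$ in \eqref{wL1+wL2}--\eqref{wL2+rhoL2+uL2}. Assembling $\|\rho_k^{1/2}w\|_{L^2}\le\|\rho_k^{1/2}w_{Na}\|_{L^2}+\sum_j|c_j|\,\|\rho_k^{1/2}w_j\|_{L^2}$ and the analogous triangle inequalities for $u$ and $w$ then yields the two claimed estimates.

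The main obstacle I anticipate is the simultaneous control, uniform down to $|k|=10\nu$, of the determinant lower bound and of the boundary-layer norms of $w_1,w_2$: since $\delta=\nu^{1/3}|k|^{-1/3}$ is no longer small relative to the domain at the bottom of the intermediate band, the Airy asymptotics must be used in their refined (non-leading-order) form rather than the cruder high-frequency asymptotics of \cite{Chen-Li-Wei-Zhang}, and it is exactly the maximized gain of Lemma \ref{A3} that keeps the corrector estimates sharp enough to push the lower endpoint of the intermediate regime down to $10\nu$.
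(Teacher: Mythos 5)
Your architecture coincides with the paper's: the same decomposition $\phi=\phi_{Na}+c_1\phi_1+c_2\phi_2$, Navier-slip bounds for $\phi_{Na}$ from Proposition \ref{pro: reso esti FL2}, Airy-based estimates for the homogeneous correctors, coefficient bounds, and assembly by the triangle inequality. Two remarks on where your sketch diverges. First, the paper normalizes the correctors by $\phi_1'(1)=1$, $\phi_1'(-1)=0$ (and symmetrically for $\phi_2$), so your $2\times 2$ system becomes the identity and the $c_j$ admit the explicit $\sinh$-kernel representation \eqref{def: c1c2}; the determinant you worry about does not disappear but migrates into the construction of $w_1,w_2$ themselves, namely the lower bound \eqref{claim App D} for $|A_1A_2-B_1B_2|$. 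Note also that the degeneracy there is not ``correctors aligning as $\la$ approaches the boundary'': the leading term of that determinant is $|(e^{4k}-1)(1-\eta_1(2L)\eta_2(2L))|\gtrsim k$, so it is the limit $k\to 0$ that is delicate, and the choice $a_\sigma=0.47$ in Lemma \ref{A3} is precisely what keeps $1-\frac{1}{a_\sigma L-k}>0$ down to $k=10\nu$ --- consistent with your closing paragraph, but with the degeneracy attributed to the wrong parameter.

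The genuine gap is in your treatment of the coefficients for $F\in L^2$. Proposition \ref{lemma: weak resolvent estimate} only yields $(1+|\la\mp 1|)^{3/4}|c_j|\lesssim \nu^{-\frac12}|k|^{-\frac12}\|F\|_{H^{-1}}$; if, as your parenthetical suggests, you feed in $\|F\|_{H^{-1}}\lesssim\|F\|_{L^2}$, then multiplying by $\nu^{\frac16}|k|^{\frac56}\|w_j\|_{L^1}$ leaves a factor $\nu^{-\frac13}|k|^{\frac13}=L$, which is unbounded as $\nu\to 0$ with $|k|\sim 1$, so the middle term of \eqref{wL1+wL2} does not close (the same loss occurs for the $\|w\|_{L^2}$ term). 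For $L^2$ data the coefficients must be bounded directly from Proposition \ref{pro: reso esti FL2}: for $|\la\mp 1|\leq 3$ use $|c_j|\leq C\|w_{Na}\|_{L^1}\lesssim \nu^{-\frac16}|k|^{-\frac56}\|F\|_{L^2}$ (the $\sinh$ kernel is bounded), and for $|\la\mp 1|\geq 3$ pair the kernel divided by $(y-\la)$ in $L^2$ with the damping term $|k|\|(y-\la)w_{Na}\|_{L^2}\leq C\|F\|_{L^2}$ of \eqref{lemma: wL2FL2}; this supplies the decay $(1+|\la\mp 1|)^{-1}$ needed to absorb the $(1+|1\mp\la|^{\frac12})^{\frac12}$ growth of $\|w_j\|_{L^2}$ in \eqref{esti: w1L2}--\eqref{esti: w2L2}. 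This is exactly Lemma \ref{lemma: c1,c2 FL2}; with it, your assembly goes through, and your observation that the weight $\rho_k^{\frac12}$ buys a factor $\delta^{\frac14}$ on the corrector norms is correct.
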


\noindent\textbf{Reformulation of the problem.}
We decompose the solution $\phi$ of \eqref{equ: psij} as follows:
\begin{align*}
\phi=\phi_{Na}+c_{1}\phi_{1}+c_{2}\phi_{2},
\end{align*}
where $\phi_{Na}$ solves the inhomogeneous OS equation with the Navier-Slip boundary condition
\begin{equation}\label{equ: psiNa2}
\left\{
\begin{aligned}
&-\nu(\pa^2_y-k^2)^2\phi_{Na}+i k(y-\la)(\pa^2_y-k^2)\phi_{Na}-\epsilon\nu^{\frac13}| k|^{\frac23}(\pa^2_y-k^2)\phi_{Na}=F,\\
&\phi_{Na}(\pm 1)=0,\qquad \phi''_{Na}(\pm 1)=0,
\end{aligned}
\right.
\end{equation}
and $\phi_1$, $\phi_2$ solve the following homogeneous OS equations
\begin{equation}\label{equ: psi1}
\left\{
\begin{aligned}
&-\nu(\pa^2_y-k^2)^2\phi_{1}+i k(y-\la)(\pa^2_y-k^2)\phi_{1}-\epsilon\nu^{\frac13}| k|^{\frac23}(\pa^2_y-k^2)\phi_{1}=0,\\
&\phi_{1}(\pm 1)=0,\qquad \phi'_{1}( 1)=1,\quad \phi'_{1}(-1)=0,
\end{aligned}
\right.
\end{equation}
and
\begin{equation}\label{equ: psi2}
\left\{
\begin{aligned}
&-\nu(\pa^2_y-k^2)^2\phi_{2}+i k(y-\la)(\pa^2_y-k^2)\phi_{2}-\epsilon\nu^{\frac13}| k|^{\frac23}(\pa^2_y-k^2)\phi_{2}=0,\\
&\phi_{2}(\pm 1)=0,\qquad \phi'_{2}(-1)=1,\quad \phi'_{2}(1)=0.
\end{aligned}
\right.
\end{equation}
Let $w_{Na}=(\pa_y^2- k^2)\phi_{Na}$ and $w_{i}=(\pa^2_y- k^2)\phi_i$ with $i\in\{1,2\}$. Then we have
\begin{align*}
w=w_{Na}+c_{1} w_{1}+c_{2}w_{2}=(\pa^2_y- k^2)\phi.
\end{align*}
Utilizing the boundary conditions in \eqref{equ: psi1} and \eqref{equ: psi2}, we obtain
\begin{equation}\label{def: c1c2}
\begin{aligned}
c_{1}(\la)=&\int^1_{-1}\frac{\sinh  k(y+1)}{\sinh 2 k}w_{Na}(y)dy,\\
c_{2}(\la)=&\int^1_{-1}\frac{\sinh  k(1-y)}{\sinh 2 k}w_{Na}(y)dy.
\end{aligned}
\end{equation}
The detailed calculations of $c_1$ and $c_2$ can be seen in \cite[P142-143]{Chen-Li-Wei-Zhang}.

The proof of Proposition \ref{lemma:non-slip boundary,resolvent} relies on not only the resolvent estimate under the Navier-slip boundary condition in Proposition \ref{pro: reso esti FL2}, but also the subsequent estimates on $w_{j}$ and the coefficients $c_{j}$ with $j=1,2$.

\vspace{0.2cm}
\noindent\textbf{Estimates of $w_1$ and $w_2$.}
It is well known that
\begin{align*}
W_1(y)=&Ai(e^{i\frac{\pi}{6}}(L(y-\la-ik\nu)+i\epsilon)),\\
W_2(y)=&Ai(e^{i\frac{5\pi}{6}}(L(y-\la-ik\nu)+i\epsilon))
\end{align*}
are two linearly independent solutions of the homogeneous OS equation
\begin{align*}
-\nu(w''-k^2w)+ik(y-\la)w-\epsilon\nu^{\frac13}|k|^{\frac23}w=0,
\end{align*}
where $Ai(y)$ is the Airy function given by a nontrivial solution of $f''-yf=0$.
In terms of the boundary conditions, the solutions of \eqref{equ: psi1} and \eqref{equ: psi2} can be rewritten as
\begin{align}\label{expre: w1,w2}
w_1=C_{11}W_1+C_{12}W_2, \quad w_2=C_{21}W_1+C_{22}W_2,
\end{align}
where the coefficients $C_{ij}$ $(1\leq i,j\leq 2)$ satisfy
\begin{equation*}
\left(\begin{aligned}
&C_{11}\\
&C_{12}
\end{aligned}
\right)=\frac{\left(
\begin{aligned}
&A_2e^{ k}-B_2e^{- k}\\
&-B_1e^{ k}+A_1e^{- k}
\end{aligned}
\right)}{A_1A_2-B_1B_2},\qquad
\left(\begin{aligned}
&C_{21}\\
&C_{22}
\end{aligned}
\right)=\frac{\left(
\begin{aligned}
&-A_2e^{- k}+B_2e^{ k}\\
&B_1e^{- k}-A_1e^{ k}
\end{aligned}
\right)}{A_1A_2-B_1B_2}
\end{equation*}
with
\begin{equation}\label{A12B12}
\begin{aligned}
&A_1=\int^1_{-1}e^{ky}W_1(y)dy,\qquad A_2=\int^1_{-1} e^{-ky}W_2(y)dy,\\
&B_1=\int^1_{-1}e^{-ky}W_1(y)dy,\qquad B_2=\int^1_{-1}e^{ky}W_2(y)dy.
\end{aligned}
\end{equation}
Next, we give the estimates of $C_{ij}$ and $W_j$ with $1\leq i,j\leq 2$ and $10\nu\leq |k|<1$. Denote
\begin{align*}
A_0(z)=\int^{\infty}_{e^{i\pi/6}z} Ai(t) dt,\quad  d=-1-\la-i k \nu,\quad  \widetilde{d}=-1+\la-i k \nu,\quad L=({k}/{\nu})^{\frac13}.
\end{align*}
\begin{lemma}\label{lemma: W1W2}
For $10\nu\leq |k|<1$, there exists $\epsilon_0$ such that for $ \epsilon\leq \epsilon_0$, it holds
\begin{align}
&\frac{L}{|A_0(Ld+i\epsilon)|}\|W_1\|_{L^\infty}\leq CL(1+|L(1+\la)|^{\frac12}),\label{W1Linfy}\\
&\frac{L}{|A_0(L\widetilde{d}+i\epsilon)|}\|W_2\|_{L^\infty}\leq CL(1+|L(1-\la)|^{\frac12}),\label{W2Linfty}\\
&\frac{L}{|A_0(Ld+i\epsilon)|}\|W_1\|_{L^1}+\frac{L}{|A_0(L\widetilde{d}+i\epsilon)|}\|W_2\|_{L^1}\leq C,\label{W1+W2L1}\\
&\frac{L}{|A_0(Ld+i\epsilon)|}\|\rho^{\frac12}_k W_1\|_{L^2}+\frac{L}{|A_0(L\tilde{d}+i\epsilon)|}\|\rho^{\frac12}_k W_2\|_{L^2}\leq CL^{\frac12}.\label{we-W1+W2}
\end{align}
\end{lemma}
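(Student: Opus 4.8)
The plan is to reduce each of the four bounds to a pointwise comparison between $W_1$ (resp. $W_2$) and the Airy integral $A_0$. The starting point is the identity $A_0'(z)=-e^{i\pi/6}Ai(e^{i\pi/6}z)$, obtained by differentiating the lower limit in the definition of $A_0$; writing $\zeta(y)=L(y-\lambda-ik\nu)+i\epsilon$ we therefore have $W_1(y)=-e^{-i\pi/6}A_0'(\zeta(y))$ with $\zeta(-1)=Ld+i\epsilon=:X$, so that $|W_1(-1)|=|A_0'(X)|$. Everything then hinges on the refined estimates of the Airy function established in this section (in particular Lemma \ref{A3}), which I will invoke in three forms: (i) the logarithmic–derivative bound $|A_0'(z)|\leq C(1+|z|^{\frac12})|A_0(z)|$, valid uniformly in $|z|$; (ii) the fact that $|Ai|$ is monotone along $e^{i\pi/6}\R$, so that $y\mapsto|W_1(y)|$ is decreasing on $[-1,1]$ with $\|W_1\|_{L^\infty}=|W_1(-1)|$ and $|W_1|$ concentrated near $y=-1$ on the scale $(L(1+|X|^{\frac12}))^{-1}$; and (iii) a one–dimensional comparison $\int|Ai(e^{i\pi/6}(s+ic))|\,ds\leq C|A_0(X)|$ over the relevant $s$–segment. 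The delicate point is that all of this must hold uniformly for $10\nu\leq|k|<1$, i.e. for $L=(|k|/\nu)^{\frac13}$ ranging from $O(1)$ (when $|k|\sim\nu$) up to $\nu^{-\frac13}$; this is exactly where the sharpened bounds with the maximized gain of $\sigma$ are used, since the purely asymptotic (large–argument) Airy estimates do not cover $L=O(1)$.

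Granting (i)--(iii), the proof of \eqref{W1Linfy} is immediate: $\|W_1\|_{L^\infty}=|W_1(-1)|=|A_0'(X)|\leq C(1+|X|^{\frac12})|A_0(X)|$, and since $\operatorname{Re}X=-L(1+\lambda)$ while $|\operatorname{Im}X|=|Lk\nu-\epsilon|\leq C$ on this frequency range, one has $|X|\leq L|1+\lambda|+C$ and hence $1+|X|^{\frac12}\leq C(1+|L(1+\lambda)|^{\frac12})$; multiplying by $L/|A_0(X)|$ gives \eqref{W1Linfy}. For the $L^1$ part of \eqref{W1+W2L1} I change variables $s=\operatorname{Re}\zeta=L(y-\lambda)$, so that $\|W_1\|_{L^1}=L^{-1}\int|Ai(e^{i\pi/6}(s+ic))|\,ds$ with $c=\epsilon-Lk\nu$; by (ii) the integrand is largest at the left endpoint $s=\operatorname{Re}X$, and (iii) bounds the integral by $C|A_0(X)|$, whence $\|W_1\|_{L^1}\leq CL^{-1}|A_0(X)|$, which is the asserted uniform constant after dividing by $|A_0(X)|/L$.

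The only estimate requiring genuine work is the weighted $L^2$ bound \eqref{we-W1+W2}, where the role of $\rho_k$ is to cancel a surplus factor $(1+|X|^{\frac12})^{\frac12}$ that would otherwise appear: unweighted, the concentration scale from (ii) gives $\|W_1\|_{L^2}^2\approx|W_1(-1)|^2\,(L(1+|X|^{\frac12}))^{-1}$, and inserting $|A_0'(X)|^2\leq C(1+|X|^{\frac12})^2|A_0(X)|^2$ leaves one power of $(1+|X|^{\frac12})$ too many. Since $\rho_k(y)=(1+y)/\delta$ with $\delta=1/L$ (see \eqref{def:rhok}) vanishes exactly at the point $y=-1$ where $|W_1|$ peaks, and $|W_1(y)|^2\approx|W_1(-1)|^2e^{-cL(1+|X|^{\frac12})(1+y)}$ there, the boundary-layer contribution is
\begin{align*}
\int_{-1}^{-1+\delta}\rho_k|W_1|^2\,dy\ \approx\ L\,|A_0'(X)|^2\int_0^\infty u\,e^{-cL(1+|X|^{\frac12})u}\,du\ \approx\ \frac{|A_0'(X)|^2}{L(1+|X|^{\frac12})^2}\ \leq\ \frac{C}{L}|A_0(X)|^2,
\end{align*}
while the region $\{\rho_k\equiv1\}$ contributes no more, by the exponential decay in (ii). Hence $\|\rho_k^{\frac12}W_1\|_{L^2}\leq CL^{-\frac12}|A_0(X)|$, which is \eqref{we-W1+W2} after multiplying by $L/|A_0(X)|$.

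Finally, the bound \eqref{W2Linfty} and the $W_2$–halves of \eqref{W1+W2L1} and \eqref{we-W1+W2} follow from the reflection identity $W_2(y)=\overline{W_1^{(-\lambda)}(-y)}$ (using $e^{-i5\pi/6}=-e^{i\pi/6}$ and the reality of $Ai$), where $W_1^{(-\lambda)}$ denotes $W_1$ with $\lambda$ replaced by $-\lambda$. Under this symmetry $|W_2|$ peaks at $y=1$, the normalizing constant becomes $A_0(L\widetilde{d}+i\epsilon)$, the factor $|1+\lambda|$ becomes $|1-\lambda|$, and $\rho_k$ is invariant under $y\mapsto-y$, so the $W_2$ estimates are exactly the $W_1$ estimates with $\lambda\mapsto-\lambda$. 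I expect the main obstacle to be precisely the uniform validity of (i)--(iii) down to $L=O(1)$: proving the logarithmic-derivative bound and the $L^1/L^2$ comparison for $A_0$ at arbitrary argument size (rather than only in the WKB regime) is what permits the intermediate-frequency window to reach $10\nu$, and the weighted $L^2$ computation above is where the vanishing of $\rho_k$ must be matched exactly against the Airy concentration scale $(L(1+|X|^{\frac12}))^{-1}$.
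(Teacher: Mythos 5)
Your proposal is correct and takes essentially the same route as the paper: the paper proves this lemma simply by observing that $L=(k/\nu)^{1/3}>1$ still holds for $10\nu\leq|k|<1$ and then invoking \cite[Lemma 5.2]{Chen-Li-Wei-Zhang} verbatim, and your reconstruction via $W_1(y)=-e^{-i\pi/6}A_0'(\zeta(y))$, the two-sided bounds $c(1+|z|^{\frac12})|A_0(z)|\leq|A_0'(z)|\leq(1+|z|^{\frac12})|A_0(z)|$ of Lemma \ref{A2}, the decay of Lemma \ref{A3}, and the reflection symmetry $\la\mapsto-\la$ for $W_2$ is precisely the mechanism behind that cited proof. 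Two cosmetic caveats only: exact monotonicity of $|Ai|$ along $e^{i\pi/6}\R$ is neither stated nor needed, since the endpoint-sup and the concentration scale $(L(1+|X|^{\frac12}))^{-1}$ follow, up to constants, from $\operatorname{Re}(A_0'/A_0)\leq-c(1+|z|^{\frac12})$ (which also yields your comparison (iii) by integrating $(1+|X+x|^{\frac12})e^{-c\int_0^x(1+|X+t|^{\frac12})dt}$ exactly); and the ``maximized gain of $\sigma$'' ($a_\sigma=0.47$) is actually needed in Lemma \ref{lemma: Cij} rather than here, where any $a_\sigma>0$ suffices.
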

\begin{proof}
Noting that $L=(k/\nu)^{1/3}>1$ in the case of $10\nu \leq |k| < 1$, we can
obtain the estimates \eqref{W1Linfy}-\eqref{we-W1+W2} for the frequency range
 $ 10\nu\leq|k|\le1$ in exactly same way
as in \cite[Lemma 5.2]{Chen-Li-Wei-Zhang}.
\end{proof}

Now we consider the estimates of $C_{ij}$.  Although the proof strategy is close to the one in \cite[Lemma 5.2]{Chen-Li-Wei-Zhang},  we encounter  different scenarios and difficulties, so that we require to exploit more structure, and more employ  refined estimates of the Airy function for the frequencies $10\nu \leq |k| < 1$.
 \begin{lemma}\label{lemma: Cij}
For $10\nu\leq |k|<1$, there exist $\epsilon_0$ such that for $ \epsilon \leq \epsilon_0$, it holds that
\begin{align*}
&|C_{11}|\leq \frac{C L }{|A_0(Ld+i\epsilon)|},\quad |C_{12}|\leq \frac{CL}{|A_0(L\widetilde{d}+i\epsilon)|},\\
&|C_{21}|\leq \frac{CL}{|A_0(Ld+i\epsilon)|},\quad |C_{22}|\leq \frac{CL }{|A_0(L\widetilde{d}+i\epsilon)|}.
\end{align*}
\end{lemma}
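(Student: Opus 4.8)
The plan is to estimate each coefficient by an upper bound on its numerator divided by a lower bound on the common denominator $A_1A_2-B_1B_2$, the entire difficulty lying in the latter: since $|k|<1$ the weights $e^{\pm ky}$ are close to $1$, so $A_1\approx B_1$ and $A_2\approx B_2$, and the determinant nearly cancels. The guiding observation is that \emph{both} the numerators and the determinant carry a spurious factor $|k|$, and these factors cancel; tracking them is what makes the bounds uniform down to $|k|=10\nu$.

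For the numerators I would first re-expose the factor $|k|$ by rewriting them through hyperbolic sines. From the definitions \eqref{A12B12},
\[
A_2e^{k}-B_2e^{-k}=2\int_{-1}^1\sinh\!\big(k(1-y)\big)W_2(y)\,dy,\qquad
-B_1e^{k}+A_1e^{-k}=2\int_{-1}^1\sinh\!\big(k(y-1)\big)W_1(y)\,dy,
\]
and the two numerators of $C_{21},C_{22}$ take the same shape with $\sinh(k(y+1))$. As $|k|<1$ and $|y|\le1$ we have $|\sinh(k(1\mp y))|\le C|k|$, so Lemma \ref{lemma: W1W2} (the $L^1$ bounds) gives
\[
|A_2e^{k}-B_2e^{-k}|\le C|k|\,\|W_2\|_{L^1}\le \frac{C|k|}{L}\,|A_0(L\widetilde d+i\epsilon)|,\qquad
|A_1e^{-k}-B_1e^{k}|\le \frac{C|k|}{L}\,|A_0(Ld+i\epsilon)|.
\]

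The core is therefore the lower bound $|A_1A_2-B_1B_2|\ge c\,|k|\,L^{-2}\,|A_0(Ld+i\epsilon)|\,|A_0(L\widetilde d+i\epsilon)|$. Writing the determinant as a double integral,
\[
A_1A_2-B_1B_2=2\iint_{[-1,1]^2}\sinh\!\big(k(y_1-y_2)\big)\,W_1(y_1)\,W_2(y_2)\,dy_1\,dy_2,
\]
and using $\sinh(k(y_1-y_2))=k(y_1-y_2)\big(1+O(k^2)\big)$ reduces the leading part to $2k\,(J_1I_2-I_1J_2)$ with $I_j=\int_{-1}^1W_j$, $J_j=\int_{-1}^1 yW_j$. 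Since $W_1$ and $W_2$ concentrate, at the Airy scale $L^{-1}$, near the opposite endpoints $y=-1$ and $y=+1$, one has $J_1=-I_1+O(L^{-1}I_1)$ and $J_2=I_2+O(L^{-1}I_2)$, hence $J_1I_2-I_1J_2=-2I_1I_2\big(1+O(L^{-1})\big)$, which does not cancel. To identify $I_j$ with $A_0$ I would integrate $A_0'(z)=-e^{i\pi/6}Ai(e^{i\pi/6}z)$, obtaining the exact identity
\[
I_1=\frac{e^{-i\pi/6}}{L}\Big(A_0(Ld+i\epsilon)-A_0\big(L(1-\la-ik\nu)+i\epsilon\big)\Big),
\]
and the reflected computation relates $I_2$ to $A_0(L\widetilde d+i\epsilon)$. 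The refined decay of $A_0$ recorded in Lemma \ref{A3} (and in Section \ref{sec: reso esti}) shows the ``wrong-endpoint'' value is negligible for $10\nu\le|k|<1$, giving $|I_j|\sim L^{-1}|A_0(\cdot)|$ and the matching control of $J_j$. Combining yields the claimed determinant bound; dividing the numerator estimates by it, the factors $|k|$ cancel and produce $|C_{11}|,|C_{21}|\le CL/|A_0(Ld+i\epsilon)|$ and $|C_{12}|,|C_{22}|\le CL/|A_0(L\widetilde d+i\epsilon)|$.

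The main obstacle is the denominator estimate in the present \emph{moderate}-$L$ regime. Unlike the range $|k|\ge1$ of \cite{Chen-Li-Wei-Zhang}, where $L=(k/\nu)^{1/3}\ge\nu^{-1/3}$ is enormous and the concentration of $W_1,W_2$ is overwhelming, here $L$ is only bounded below by $10^{1/3}$, so the $O(L^{-1})$ corrections to the non-cancelling leading term $-2I_1I_2$ must be controlled by genuinely quantitative Airy asymptotics rather than soft localization. This is exactly where the division point $10\nu$ is used: the constant is chosen so that the leading term strictly dominates the remainder, which is the analytic heart of extending the intermediate-frequency estimates down to $|k|=10\nu$.
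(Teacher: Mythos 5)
Your overall architecture is the paper's: bound each numerator by $C|k|L^{-1}|A_0(\cdot)|$ via the $\sinh$ rewriting and the $L^1$ bounds of Lemma \ref{lemma: W1W2}, then prove the determinant lower bound $|A_1A_2-B_1B_2|\gtrsim |k|L^{-2}|A_0(Ld+i\epsilon)||A_0(L\widetilde{d}+i\epsilon)|$ (this is exactly the paper's claim \eqref{claim App D}), so that the factors of $|k|$ cancel; your numerator estimates and the exact identity for $I_1$ are correct and match the paper. The genuine gap is in the determinant bound, at precisely the point you flag but do not resolve. You linearize $\sinh(k(y_1-y_2))$ and reduce to $J_1I_2-I_1J_2$ with claimed relative errors $O(L^{-1})$. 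But here $L$ may be as small as $10^{1/3}\approx 2.15$, and the only quantitative decay available is Lemma \ref{A3} with rate $a_{\sigma}<|a(0)|=0.4843$. Implementing your localization by parts with the antiderivative $G(y)=-e^{-i\pi/6}L^{-1}A_0(L(y-\la-ik\nu)+i\epsilon)$ gives
\begin{align*}
|J_1+I_1|=\Big|2G(1)-\int_{-1}^{1}G\,dy\Big|\leq |I_1|\Big(\frac{1}{a_{\sigma}L}+\frac{2e^{-2a_{\sigma}L}}{1-e^{-2a_{\sigma}L}}\Big),
\end{align*}
and at $L=10^{1/3}$ the bracket is at least $0.95+0.28>1.2$ even with $a_{\sigma}$ at its maximal value. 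The same loss occurs in $J_2$, so in $J_1I_2-I_1J_2=-2I_1I_2+E_1I_2-I_1E_2$ each relative error would need to be $<1$ while the available bounds give $>1.2$: the triangle inequality yields no positive lower bound, and ``the leading term strictly dominates the remainder'' cannot be justified by the tools you invoke. (Secondarily, the linearization $\sinh x=x(1+O(x^2))$ is itself not a small perturbation for $|k|$ near $1$, with relative error up to $\sinh 2/2-1\approx 0.8$, and the remainder couples to $\|W_1\|_{L^1}\|W_2\|_{L^1}$ rather than $|I_1I_2|$, losing further constants.)

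The paper closes this by avoiding the moment expansion altogether: it keeps the exponentials exact, expands $A_i,B_i$ through $\eta(z,x)=A_0(z+x)/A_0(z)$, and extracts the main term algebraically,
\begin{align*}
B_{11}B_{21}-A_{11}A_{21}=(e^{4k}-1)\big(1-\eta_1(2L)\eta_2(2L)\big),\qquad |B_{11}B_{21}-A_{11}A_{21}|\geq 4k\big(1-e^{-4a_{\sigma}L}\big),
\end{align*}
while the cross terms $\textrm{II}$, $\textrm{III}$ carry the single critical ratio $\frac{1}{a_{\sigma}L-k}$ with coefficient exactly one. Positivity then reduces to $a_{\sigma}L-k>1$, which is what forces the two quantitative choices your proposal leaves implicit: $a_{\sigma}=0.47$, so that $a_{\sigma}\cdot 10^{1/3}=1.0126>1$, and the division point $10\nu$ itself (together with $\nu_0$ small, so that $k=10\nu$ is small exactly where $L$ attains its minimum). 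In short, the exact $\eta$-algebra produces a relative error constant strictly below $1$ in the moderate-$L$ regime, where your first-moment expansion produces one above $1$; without that structure the argument does not close at the bottom of the intermediate range.
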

\begin{proof}
According to the same calculation as in \cite[(8.3),(8.4),(8.6),(8.7)]{Chen-Li-Wei-Zhang}, we have
\begin{equation*}
\begin{aligned}
&A_1=A_0(Ld+i\epsilon)\frac{e^{- k-i\pi/6}}{L}\left[1-e^{2 k}\eta(Ld+i\epsilon,2L)+\frac{ k}{L}\int^{2L}_0 e^{\frac{ k t}{L}}\eta (Ld+i\epsilon,t)dt\right],\\
&\overline{A_2}=A_0(L\widetilde{d}+i\epsilon)\frac{e^{- k-i\pi/6}}{L}\left[1-e^{2 k}\eta(L\widetilde{d}+i\epsilon,2L)+\frac{ k}{L}\int^{2L}_0 e^{\frac{ k t}{L}}\eta (L\widetilde{d}+i\epsilon,t)dt\right],\\
&B_1=A_0(Ld+i\epsilon)\frac{e^{ k-i\pi/6}}{L}\left[1-e^{-2 k}\eta(Ld+i\epsilon,2L)-\frac{ k}{L}\int^{2L}_0 e^{-\frac{ k t}{L}}\eta (Ld+i\epsilon,t)dt\right],\\
& \overline{B_2}=A_0(L\widetilde{d}+i\epsilon)\frac{e^{ k-i\pi/6}}{L}\left[1-e^{-2 k}\eta(L\widetilde{d}+i\epsilon,2L)-\frac{ k}{L}\int^{2L}_0 e^{-\frac{ k t}{L}}\eta (L\widetilde{d}+i\epsilon,t)dt\right].
\end{aligned}
\end{equation*}
For $10\nu \leq |k|< 1$, by \eqref{A12B12} and Lemma \ref{lemma: W1W2}, we arrive at
\begin{align*}
|A_2e^{k}-B_2e^{-k}|=&\Big|\int^1_{-1}(e^{k(1-y)}-e^{-k(1-y)})W_2(y)dy\Big|\\
\leq& \int^1_{-1} C|k|(1-y)|W_2(y)|dy\leq C|k|L^{-1}|A_0(L\tilde{d}+i\epsilon)|.
\end{align*}
We claim
\begin{align}\label{claim App D}
|A_1A_2-B_1B_2|\geq  |k| L^{-2} |A_0(Ld+i\epsilon)| |A_0(L\widetilde{d}+i\epsilon)|.
\end{align}
This yields
\begin{align*}
|C_{11}|\leq \frac{CL}{ |A_0(Ld+i\epsilon)|}.
\end{align*}

Next, we prove the claim \eqref{claim App D}. It suffice to deal with the case $10\nu\leq k<1$, as the proof for the case $-1<k\leq -10\nu$ follows a similar argument.
Due to
\begin{align}\label{A1A2-B1B2}
|A_1A_2-B_1B_2|=\left|\frac{A_1A_2}{B_1B_2}-1\right||B_1B_2|,
\end{align}
it remains to give the low bound of $|\frac{A_1A_2}{B_1B_2}-1|$ and $|B_1B_2|$.
From Lemma \ref{A3}, we deduce that
\begin{equation}\label{B1B2 geq}
\begin{aligned}
|B_1|\geq& L^{-1}|A_0(Ld+i\epsilon)|e^{k}\Big(1-e^{-2 k}e^{-2a_{\sigma}L}-\frac{ k}{L}\int^{2L}_0 e^{-\frac{ k t}{L}-a_{\sigma}t} dt\Big)\gtrsim L^{-1}|A_0(Ld+i\epsilon)|,\\
|B_2|\geq&L^{-1}|A_0(L\widetilde{d}+i\epsilon)|e^{k}\Big(1-e^{-2 k}e^{-2a_{\sigma}L}-\frac{ k}{L}\int^{2L}_0 e^{-\frac{ k t}{L}-a_{\sigma}t} dt\Big)\gtrsim L^{-1}|A_0(L\widetilde{d}+i\epsilon)|.
\end{aligned}
\end{equation}

In order to estimate $|\frac{A_1A_2}{B_1B_2}-1|$, we divide $A_1$, $A_2$, $B_1$, $B_2$  as follows:
\begin{align*}
A_1=&A_0(Ld+i\epsilon)\frac{e^{- k-i\pi/6}}{L}(A_{11}+A_{12}),\quad A_2=\overline{A_0(L\widetilde{d}+i\epsilon)}\frac{e^{- k+i\pi/6}}{L}(A_{21}+A_{22}),\\
B_1=&A_0(Ld+i\epsilon)\frac{e^{ -k-i\pi/6}}{L}(B_{11}+B_{12}),\quad
B_2=\overline{A_0(L\widetilde{d}+i\epsilon)}\frac{e^{ -k+i\pi/6}}{L}(B_{21}+B_{22}),
\end{align*}
where
\begin{align*}
A_{11}&=:1-e^{2 k}\eta_1(2L),\quad A_{12}=:\frac{ k}{L}\int^{2L}_0 e^{\frac{ k t}{L}}\eta_1 (t)dt,\\
A_{21}&=:1-e^{2 k}\eta_2(2L),\quad A_{22}=:\frac{ k}{L}\int^{2L}_0 e^{\frac{ k t}{L}}\eta_2 (t)dt,\\
B_{11}&=:e^{2k}-\eta_1(2L),\quad B_{12}=:-\frac{ ke^{2k}}{L}\int^{2L}_0 e^{-\frac{ k t}{L}}\eta_1(t)dt,\\
B_{21}&=:e^{2k}-\eta_2(2L),\quad B_{22}=:-\frac{ ke^{2k}}{L}\int^{2L}_0 e^{-\frac{ k t}{L}}\eta_2(t)dt,
\end{align*}
with $\eta_1(t)=\eta(Ld+i\epsilon,t)$ and $\eta_2(t)=\overline{\eta (L\widetilde{d}+i\epsilon,t)}$.
It is easy to see that $A_{i2}$, $B_{i2}$ $(1\leq i\leq 2)$ are error terms. Denote
\begin{align*}
\left|\frac{A_1A_2}{B_1B_2}-1\right|=\left|\frac{(B_{11}+B_{12})(B_{21}+B_{22})-(A_{11}+A_{12})(A_{21}+A_{22})}{(B_{11}+B_{12})(B_{21}+B_{22})} \right|=:\left|\frac{D_1}{D_2}\right|.
\end{align*}

We first give the bound of $A_{ij}$ and $B_{ij}$ with $1\leq i,j\leq 2$. Thanks to Lemma \ref{A3}, we have
\begin{align*}
|A_{11}|+|A_{21}|\leq& 1+e^{2k-2a_{\sigma}L},\quad |A_{12}|+|A_{22}|\leq \frac{k}{L}\int^{2L}_{0}e^{\frac{ k t}{L}}e^{a_{\sigma} t}dt\leq \frac{k(1-e^{2k-2a_{\sigma}L})}{a_{\sigma}L-k},\\
|B_{11}|+|B_{21}|\leq& e^{2k}+e^{-2a_{\sigma}L}, \quad |B_{12}|+|B_{22}|\leq \frac{ ke^{2k}}{L}\int^{2L}_0 e^{-\frac{ k t}{L}}e^{a_{\sigma} t}dt \leq \frac{k(e^{2k}-e^{-2a_{\sigma}L})}{a_{\sigma}L+k}.
\end{align*}

For  $D_2$, in light of Lemma \ref{A3} and $10\nu\leq  k< 1$, we obtain
\begin{equation}\label{1-e2k}
\begin{aligned}
|D_2|=|(B_{11}+B_{12})(B_{21}+B_{22})|\leq (e^{2k}+e^{-2a_{\sigma}L})^2\left(\frac{k}{a_{\sigma}L+k}(e^{2k}-e^{-2a_{\sigma}L})\right)^2\leq e^8.
\end{aligned}
\end{equation}

We divide $D_1$ into
\begin{align*}
D_1=:\textrm{I}+\textrm{II}+\textrm{III},
\end{align*}
where
\begin{align*}
\textrm{I}=&B_{11}B_{21}-A_{11}A_{21},\\
\textrm{II}=&B_{11}B_{22}+B_{21}B_{12}-A_{11}A_{22}-A_{21}A_{12},\\
\textrm{III}=&B_{12}B_{22}-A_{12}A_{22}.
\end{align*}

 A direct computation shows
\begin{align*}
|\textrm{I}|=&|(e^{2k}-\eta_1(2L))(e^{2k}-\eta_2(2L))-(1-e^{2k}\eta_1(2L))(1-e^{2k}\eta_2(2L))|\\
=&|(e^{4k}-1)(1-\eta_1(2L)\eta_2(2L))|\\
\geq &4k(1-e^{-4a_{\sigma}L}),\\
|\textrm{II}|
\leq&\frac{2k(e^{2k}+e^{-2a_{\sigma}L})(e^{2k}-e^{-2a_{\sigma}L})}{a_{\sigma}L+k}+\frac{2k(1+e^{2k}e^{-2a_{\sigma}L})(1-e^{2k-2a_{\sigma}L})} {a_{\sigma}L-k} \\
= & \frac{2k(e^{4k}-e^{-4a_{\sigma}L})}{a_{\sigma}L+k}+\frac{2k(1-e^{4k-4a_{\sigma}L})}{a_{\sigma}L-k}\\
\leq& \frac{2k(e^{4k}-1+1-e^{-4a_{\sigma}L})}{a_{\sigma}L+k}+\frac{2k(1-e^{-4a_{\sigma}L})}{a_{\sigma}L-k}
\leq \frac{4k(1-e^{-4a_{\sigma}L})}{a_{\sigma}L-k}+\frac{8k^2}{a_{\sigma}L+k},
\end{align*}
and
\begin{align*}
|\textrm{III}|\leq & \frac{e^{4k}k^2(1-e^{-2a_{\sigma}L-2k})^2}{(a_{\sigma}L+k)^2}+\frac{k^2(1-e^{2k-2a_{\sigma}L})^2}{(a_{\sigma}L-k)^2}
\leq \frac{(e^{4k}+1)k^2}{(a_{\sigma}L-k)^2}.
\end{align*}
Inserting the estimates of $\textrm{I}$, $\textrm{II}$, $\textrm{III}$ into $D_1$, we obtain
\begin{align*}
|D_1|\geq 4k(1-e^{-4a_{\sigma}L})\left(1-\frac{1}{a_{\sigma}L-k}\right)-\frac{28k^2}{a_{\sigma}L+k}-\frac{(e^{4k}+1)k^2}{(a_{\sigma}L-k)^2}.
\end{align*}
It is clear that the maximum of $\frac{1}{a_{\sigma}L-k}$, $\frac{28k^2}{a_{\sigma}L+k}$ and $\frac{(e^{4k}+1)k^2}{(a_{\sigma}L-k)^2}$ are attained at $k=10\nu$. We choose $a_{\sigma}=0.47$ such that $1-\frac{1}{a_{\sigma}L-k}>0$ to ensure
\begin{align*}
|D_1|\geq 0.02k.
\end{align*}
This inequality together with \eqref{1-e2k} implies
\begin{align}\label{fracA1A2B1B2-1}
\left|\frac{A_1A_2}{B_1B_2}-1\right|=\left|\frac{D_1}{D_2}\right|\geq 0.002e^{-4}k.
\end{align}
Inserting \eqref{fracA1A2B1B2-1} and \eqref{B1B2 geq} into \eqref{A1A2-B1B2}, we arrive at
\begin{align*}
|A_1A_2-B_1B_2|\geq 0.002e^{-4}|k||B_1B_2|\gtrsim  |k| L^{-2} |A_0(Ld+i\epsilon)| |A_0(L\widetilde{d}+i\epsilon)|.
\end{align*}

In a similar way, we can obtain the bound of $C_{12}$, $C_{21}$ and $C_{22}$.
\end{proof}

Now, we estimate  $w_1$ and $w_2$, which is a direct result of Lemmas \ref{lemma: W1W2}--\ref{lemma: Cij} and interpolation.
\begin{proposition}\label{Lemma: Nonslip wLinfty wL1}
Let $w_1$ and $w_2$ be defined as in \eqref{expre: w1,w2}. For $10\nu\leq |k|<1$, there holds
\begin{align}
&\|w_1\|_{L^1}+\|w_2\|_{L^1}\leq C,\label{esti: w1+w2L1}\\
&\|w_1\|_{L^\infty}\leq C\nu^{-\frac12}| k|^{\frac12}(1+|1-\la|^{\frac12}),\label{w1Linfty}\\
&\|w_2\|_{L^\infty}\leq C\nu^{-\frac12}| k|^{\frac12}(1+|\la+1|^{\frac12}),\label{w2Linfty}\\
&\|w_1\|_{L^2}\leq C\nu^{-\frac14}| k|^{\frac14}(1+|1-\la|^{\frac12})^{\frac12},\label{esti: w1L2}\\
&\|w_2\|_{L^2}\leq C\nu^{-\frac14}| k|^{\frac14}(1+|1+\la|^{\frac12})^{\frac12}.\label{esti: w2L2}
\end{align}
\end{proposition}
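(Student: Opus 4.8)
The plan is to substitute the representation \eqref{expre: w1,w2}, namely $w_1=C_{11}W_1+C_{12}W_2$ and $w_2=C_{21}W_1+C_{22}W_2$, and then simply combine the coefficient bounds of Lemma \ref{lemma: Cij} with the norm bounds of Lemma \ref{lemma: W1W2}; the only genuine point in the whole argument is a bookkeeping inequality that folds one of the two boundary distances into the other.

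First I would dispatch the $L^1$ estimate \eqref{esti: w1+w2L1}. By the triangle inequality, $\|w_1\|_{L^1}\leq|C_{11}|\,\|W_1\|_{L^1}+|C_{12}|\,\|W_2\|_{L^1}$, and inserting $|C_{11}|\leq CL/|A_0(Ld+i\epsilon)|$ and $|C_{12}|\leq CL/|A_0(L\widetilde{d}+i\epsilon)|$ from Lemma \ref{lemma: Cij} turns the right-hand side into precisely the left side of \eqref{W1+W2L1}, which is bounded by $C$. The same computation gives $\|w_2\|_{L^1}\leq C$.

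Next come the $L^\infty$ bounds. Again $\|w_1\|_{L^\infty}\leq|C_{11}|\,\|W_1\|_{L^\infty}+|C_{12}|\,\|W_2\|_{L^\infty}$, and feeding the $C_{ij}$-bounds into \eqref{W1Linfy} and \eqref{W2Linfty} yields
\[
\|w_1\|_{L^\infty}\leq CL\big(1+|L(1+\la)|^{\frac12}\big)+CL\big(1+|L(1-\la)|^{\frac12}\big).
\]
Here I would use $L=(|k|/\nu)^{1/3}$ together with $|k|\geq 10\nu$ to absorb the bare $L$ into $\nu^{-1/2}|k|^{1/2}$ (since $|k|/\nu\geq 1$ forces $L\leq(|k|/\nu)^{1/2}=\nu^{-1/2}|k|^{1/2}$), and the scaling identity $L^{3/2}=\nu^{-1/2}|k|^{1/2}$ to rewrite $L\,|L(1\pm\la)|^{1/2}=\nu^{-1/2}|k|^{1/2}|1\pm\la|^{1/2}$. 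The remaining issue, and the only nontrivial step, is that the $C_{11}W_1$ contribution carries $|1+\la|^{1/2}$ whereas the target \eqref{w1Linfty} is phrased purely in terms of $|1-\la|^{1/2}$. This is resolved by the elementary inequality $|1+\la|\leq|1-\la|+2$, valid for every $\la\in\R$ (the two quantities differ by exactly $2$ once $|\la|\geq 1$, while $|1+\la|\leq 2$ when $|\la|\leq 1$), whence $|1+\la|^{1/2}\leq C(1+|1-\la|^{1/2})$ and \eqref{w1Linfty} follows. The bound \eqref{w2Linfty} is identical after the reflection $\la\mapsto-\la$, which swaps $W_1\leftrightarrow W_2$ and $|1-\la|\leftrightarrow|1+\la|$, so that the stray $|1-\la|^{1/2}$ is now absorbed through $|1-\la|\leq|1+\la|+2$.

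Finally, the $L^2$ estimates \eqref{esti: w1L2} and \eqref{esti: w2L2} follow by interpolation, using $\|f\|_{L^2}\leq\|f\|_{L^1}^{1/2}\|f\|_{L^\infty}^{1/2}$ (from $\int|f|^2\leq\|f\|_{L^\infty}\|f\|_{L^1}$). Applying this to $w_1$ with $\|w_1\|_{L^1}\leq C$ and the $L^\infty$ bound just obtained gives $\|w_1\|_{L^2}\leq C\nu^{-1/4}|k|^{1/4}(1+|1-\la|^{1/2})^{1/2}$, and likewise for $w_2$. I expect no serious obstacle beyond assembling the two preceding lemmas; the only care needed is the absorption of the ``wrong'' boundary distance via $|1+\la|\leq|1-\la|+2$ and the repeated use of the scaling relation $L^{3/2}=\nu^{-1/2}|k|^{1/2}$, both of which hinge on the standing assumption $10\nu\leq|k|<1$.
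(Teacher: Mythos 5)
Your proposal is correct and matches the paper's (essentially unwritten) argument: the paper declares the proposition ``a direct result of Lemmas \ref{lemma: W1W2}--\ref{lemma: Cij} and interpolation,'' which is precisely your combination of the $C_{ij}$ bounds with the Airy norm bounds, followed by $\|f\|_{L^2}\le\|f\|_{L^1}^{1/2}\|f\|_{L^\infty}^{1/2}$. The bookkeeping steps you single out --- $L\le L^{3/2}=\nu^{-1/2}|k|^{1/2}$ (valid since $|k|\ge 10\nu$ gives $L>1$) and the absorption $|1+\la|\le|1-\la|+2$ --- are exactly the routine details the paper omits, and they hold as you state.
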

We also have the following weighted version, which plays an important role in the proof of space-time estimates in Propositions \ref{Prop: inhomo} and \ref{Prop homo pro}.
\begin{proposition}\label{lemma: Nonslip weighted version}
Let $\rho_k$ be defined by \eqref{def:rhok}. Under the same conditions as in Proposition \ref{Lemma: Nonslip wLinfty wL1}, we have
\begin{align}
&\|\rho^{\frac12}_k w_1\|_{L^2}+\|\rho^{\frac12}_k w_2\|_{L^2}\leq CL^{\frac12},\label{rho12wL2}\\
&\|\rho^{-\frac14}_kw_1\|_{L^2}\leq C\nu^{-\frac{7}{24}}| k|^{\frac{7}{24}}(1+|1-\la|^{\frac12})^{\frac{3}{4}},\label{rho-14w1L2}\\
&\|\rho^{-\frac14}_kw_2\|_{L^2}\leq C\nu^{-\frac{7}{24}}| k|^{\frac{7}{24}}(1+|1+\la|^{\frac12})^{\frac{3}{4}}.\label{rho-14w2L2}
\end{align}
\end{proposition}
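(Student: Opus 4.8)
The plan is to reduce everything to the decomposition $w_i=C_{i1}W_1+C_{i2}W_2$ from \eqref{expre: w1,w2} and to feed in the bounds on the coefficients $C_{ij}$ from Lemma \ref{lemma: Cij} together with the Airy estimates of Lemma \ref{lemma: W1W2}. The estimate \eqref{rho12wL2} is then immediate: by the triangle inequality $\|\rho_k^{1/2}w_1\|_{L^2}\le |C_{11}|\,\|\rho_k^{1/2}W_1\|_{L^2}+|C_{12}|\,\|\rho_k^{1/2}W_2\|_{L^2}$, and since Lemma \ref{lemma: Cij} gives $|C_{11}|\le CL/|A_0(Ld+i\epsilon)|$ and $|C_{12}|\le CL/|A_0(L\widetilde d+i\epsilon)|$, the denominators $|A_0|$ cancel exactly against the weighted profile bound \eqref{we-W1+W2}, leaving $\|\rho_k^{1/2}w_1\|_{L^2}\le CL^{1/2}$; the bound for $w_2$ is identical.

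The substance lies in \eqref{rho-14w1L2}--\eqref{rho-14w2L2}, where the weight $\rho_k^{-1/4}$ now blows up like $(1-|y|)^{-1/4}$ at the wall. I would split $\|\rho_k^{-1/4}w_1\|_{L^2}^2=\int_{-1}^1\rho_k^{-1/2}|w_1|^2\,dy$ into the interior $|y|\le 1-\delta$ and the two boundary layers $1-\delta\le |y|\le1$, where $\delta=\nu^{1/3}|k|^{-1/3}=L^{-1}$. On the interior $\rho_k\equiv1$, so that piece equals $\int_{|y|\le1-\delta}|w_1|^2=\int_{|y|\le1-\delta}\rho_k|w_1|^2\le\|\rho_k^{1/2}w_1\|_{L^2}^2\le CL$ by \eqref{rho12wL2}, comfortably below the target square $\nu^{-7/12}|k|^{7/12}(1+|1-\la|^{1/2})^{3/2}=L^{7/4}(1+|1-\la|^{1/2})^{3/2}$. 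In the boundary layer I would interpolate the pointwise bound \eqref{w1Linfty} against \eqref{rho12wL2}: writing $|w_1|^2=|w_1|^{3/2}\cdot|w_1|^{1/2}$, bounding $|w_1|^{3/2}\le\|w_1\|_{L^\infty}^{3/2}$ and pairing the remaining factor with $(\rho_k|w_1|^2)^{1/4}$ via Hölder, one is led to the scale $\|w_1\|_{L^\infty}^{3/2}\,\|\rho_k^{1/2}w_1\|_{L^2}^{1/2}\,\delta^{3/4}$. Inserting \eqref{w1Linfty}, \eqref{rho12wL2} and $\delta=L^{-1}$, the powers of $\nu$ and $|k|$ combine to exactly $\nu^{-7/12}|k|^{7/12}(1+|1-\la|^{1/2})^{3/2}$, the square of the claimed bound; the estimate for $w_2$ follows verbatim from \eqref{w2Linfty}.

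The main obstacle is precisely this boundary-layer term: the exponents land on the target on the nose, which forces the Hölder split to its critical value, since the residual weight is exactly $\rho_k^{-1}$, whose integral over a layer of width $\delta$ diverges logarithmically. A naive interpolation therefore loses a factor $(\log L)^{3/4}$. To remove it I would not estimate $w_1$ crudely by $\|w_1\|_{L^\infty}$ in the layer, but instead work at the level of the Airy profiles $W_1,W_2$: away from the critical point $y=\la$ the profiles $W_j$ decay rapidly, so when $\la$ sits in the interior their contribution to the boundary layer is exponentially small, while when $\la$ approaches a wall the accompanying factor $(1+|L(1\pm\la)|^{1/2})$ is of order one and compensates. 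This yields the clean weighted profile estimate $\|\rho_k^{-1/4}W_1\|_{L^2}\le CL^{-1/2}|A_0(Ld+i\epsilon)|(1+|L(1+\la)|^{1/2})^{3/4}$ together with its $W_2$ analogue. Combining these with $|C_{ij}|\le CL/|A_0|$ from Lemma \ref{lemma: Cij} cancels the $|A_0|$ factors, and the elementary inequalities $1+L^{1/2}m\le L^{1/2}(1+m)$ and $(1+|1+\la|^{1/2})\sim(1+|1-\la|^{1/2})$ turn $CL^{1/2}(1+|L(1\pm\la)|^{1/2})^{3/4}$ into the stated $C\nu^{-7/24}|k|^{7/24}(1+|1-\la|^{1/2})^{3/4}$.
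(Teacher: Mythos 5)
Your treatment of \eqref{rho12wL2} coincides with the paper's: the coefficient bounds of Lemma \ref{lemma: Cij} are paired with the weighted profile bound \eqref{we-W1+W2} so that the $|A_0|$ factors cancel, and nothing more is needed there.

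For \eqref{rho-14w1L2}--\eqref{rho-14w2L2} you arrive at the right place by a genuinely different route. The paper's proof is the argument of Proposition 4.4 in \cite{Chen-Li-Wei-Zhang} run with the modified adaptive scale $\delta_*=\nu^{\frac12}|k|^{-\frac12}(1+|1-\la|^{\frac12})^{-1}$: split the boundary layer $[1-\delta,1]$ at $1-\delta_*$, bound the innermost piece by $\|w_j\|_{L^\infty}^2\int_0^{\delta_*}(\delta/s)^{1/2}\,ds\sim\|w_j\|_{L^\infty}^2\delta^{1/2}\delta_*^{1/2}$ using \eqref{w1Linfty}, and on the transition region $[1-\delta,1-\delta_*]$ use $\rho_k^{-1/2}=\rho_k\,(\delta/(1-|y|))^{3/2}\le(\delta/\delta_*)^{3/2}\rho_k$ together with \eqref{rho12wL2}; both pieces give exactly $L^{7/4}(1+|1\mp\la|^{1/2})^{3/2}$, the square of the target. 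This is the clean repair of precisely the obstruction you diagnosed: it replaces your critical H\"older split, whose conjugate weight $\rho_k^{-1}$ is non-integrable over the layer, by a sharp two-region decomposition that uses only the two inputs \eqref{w1Linfty} and \eqref{rho12wL2} you already have. Your alternative repair --- proving the weighted profile estimate $\|\rho_k^{-1/4}W_1\|_{L^2}\le CL^{-1/2}|A_0(Ld+i\epsilon)|(1+|L(1+\la)|^{1/2})^{3/4}$ and cancelling $|A_0|$ against Lemma \ref{lemma: Cij} --- is also viable, and your numerology (the factor $1+|L(1\pm\la)|^{1/2}\le L^{1/2}(1+|1\pm\la|^{1/2})$ and the uniform equivalence $(1+|1+\la|^{1/2})\sim(1+|1-\la|^{1/2})$) checks out. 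But the profile estimate is asserted rather than proved, and the heuristic you offer for it is not quite right: $W_1$ does not decay away from the critical point $y=\la$; it grows monotonically toward the wall $y=-1$, and the uniform decay $|\eta(z,x)|\le e^{-a_\sigma x}$ of Lemma \ref{A3} alone produces only the power $1$ of $(1+|L(1+\la)|^{1/2})$, not the needed $3/4$. The correct mechanism is the enhanced rate $\operatorname{Re}(A_0'/A_0)\le -c(1+|z|^{1/2})$ of Lemma \ref{A2}, which shrinks the effective width of the $W_1$ boundary layer to $\sim L^{-1}(1+|L(1+\la)|^{1/2})^{-1}$ --- exactly the paper's $\delta_*$. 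With that ingredient made explicit your profile-level argument closes; without it, the exponent $3/4$ is out of reach.
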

The estimate \eqref{rho12wL2} is a direct result of Lemmas \ref{lemma: W1W2} and \ref{lemma: Cij}. The proof of \eqref{rho-14w1L2} and \eqref{rho-14w2L2} is close to that of Proposition 4.4 in \cite{Chen-Li-Wei-Zhang}, with the distinction that we employ $\delta_{*}=\nu^{\frac12}| k|^{-\frac12}(1+|1-\la|^{\frac12})^{-1}$ instead of $\nu^{\frac12}(1+|1-\la|^{\frac12})^{-1}$.

\vspace{0.2cm}
\noindent\textbf{Bounds on $c_1$ and $c_2$.}
We now provide the bounds for $c_1$ and $c_2$ in the intermediate-frequency $10\nu\leq |k|<1$.
\begin{lemma}\label{lemma: c1,c2 FL2}
Let $c_1$ and $c_2$ be defined as in \eqref{def: c1c2}, $\la\in \R$, $F\in L^2(I)$. For $10\nu\leq |k|<1$, it holds that
\begin{align*}
(1+|\la-1|)|c_1|+(1+|\la+1|)|c_2|\leq C\nu^{-\frac16}| k|^{-\frac56}\|F\|_{L^2}.
\end{align*}
\end{lemma}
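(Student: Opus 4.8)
The plan is to establish the bound for $c_1$; the estimate for $c_2$ then follows from the symmetric argument, since its kernel $\frac{\sinh k(1-y)}{\sinh 2k}$ peaks at $y=-1$ and the weight $(1+|\la+1|)$ plays the role of $(1+|\la-1|)$. Without loss of generality I would take $k>0$, because the kernel in \eqref{def: c1c2} is even in $k$ (replacing $k$ by $-k$ flips the sign of both $\sinh k(y+1)$ and $\sinh 2k$). The two inputs I would record at the outset are the consequences of the Navier-slip resolvent estimate \eqref{lemma: wL2FL2} applied to $w_{Na}$,
\begin{align*}
\|w_{Na}\|_{L^1}\leq C\nu^{-\frac16}|k|^{-\frac56}\|F\|_{L^2}, \qquad \|(y-\la)w_{Na}\|_{L^2}\leq C|k|^{-1}\|F\|_{L^2},
\end{align*}
together with the elementary pointwise bound $0\leq \frac{\sinh k(y+1)}{\sinh 2k}\leq 1$ on $[-1,1]$, valid because $\sinh$ is increasing and $k(y+1)\leq 2k$ there.

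I would then split according to the size of $|1-\la|$. When $|1-\la|\leq C_0$ for a fixed constant $C_0$, the factor $1+|1-\la|$ is bounded, and the crude estimate $|c_1|\leq \bn{\tfrac{\sinh k(y+1)}{\sinh 2k}}_{L^\infty}\|w_{Na}\|_{L^1}\leq \|w_{Na}\|_{L^1}$ already yields $(1+|1-\la|)|c_1|\leq C\nu^{-1/6}|k|^{-5/6}\|F\|_{L^2}$ via the first display. The decay must instead be extracted in the regime $|1-\la|>C_0$, where I would fix $C_0$ large enough (say $C_0=4$) that $\la$ lies well outside $[-1,1]$ and one has the uniform lower bound $|y-\la|\gtrsim |1-\la|$ for every $y\in[-1,1]$; this is immediate when $\la>1+C_0$, and for $\la<1-C_0$ it follows from $|y-\la|\geq -1-\la\geq \tfrac12|1-\la|$ once $C_0$ is large. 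Writing $c_1=\int_{-1}^1\frac{\sinh k(y+1)}{\sinh 2k}\,\frac{1}{y-\la}\,(y-\la)w_{Na}\,dy$ and applying Cauchy--Schwarz,
\begin{align*}
|c_1|\leq \Big(\int_{-1}^1\frac{1}{(y-\la)^2}\,dy\Big)^{\frac12}\|(y-\la)w_{Na}\|_{L^2}\leq \frac{C}{|1-\la|}\,|k|^{-1}\|F\|_{L^2},
\end{align*}
where I have folded the pointwise kernel bound and $|y-\la|\gtrsim|1-\la|$ into the integral. Hence $(1+|1-\la|)|c_1|\leq C|k|^{-1}\|F\|_{L^2}$, and since $10\nu\leq|k|$ gives $|k|^{-1}=|k|^{-5/6}|k|^{-1/6}\leq C\nu^{-1/6}|k|^{-5/6}$, the claimed bound follows in this regime as well.

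I do not expect a deep obstacle here: the argument is bookkeeping layered on top of \eqref{lemma: wL2FL2}. The one point requiring care is the quantitative choice of the threshold $C_0$ guaranteeing $|y-\la|\gtrsim|1-\la|$ uniformly on $[-1,1]$, and in particular the regime of $\la$ just below $-1$, where $1+|1-\la|$ is still bounded and so must be absorbed into the crude $L^1$ bound rather than the weighted one (this is why the weighted estimate is only invoked for $|1-\la|>C_0$, which forces $\la$ genuinely far from the interval and removes the otherwise-present singularity of $(y-\la)^{-1}$). The conceptual content is that the $L^1$ bound controls $c_1$ when $\la$ sits near the peak $y=1$ of the kernel, while the weighted $L^2$ bound supplies the decay $|1-\la|^{-1}$ when $\la$ is far; both come from the single resolvent estimate \eqref{lemma: wL2FL2}, and the conversion $|k|^{-1}\leq C\nu^{-1/6}|k|^{-5/6}$ is precisely where the intermediate-frequency constraint $|k|\geq 10\nu$ is used.
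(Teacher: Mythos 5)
Your proposal is correct and follows essentially the same route as the paper: the paper also splits into $|\la-1|\leq 3$ (where $|c_1|\leq C\|w_{Na}\|_{L^1}$ is bounded directly via \eqref{lemma: wL2FL2}) and $|\la-1|\geq 3$ (where $|y-\la|\geq \tfrac13|\la-1|$ on $(-1,1)$ allows Cauchy--Schwarz against $\|(y-\la)w_{Na}\|_{L^2}\leq C|k|^{-1}\|F\|_{L^2}$), and closes with the same conversion $|k|^{-1}\leq C\nu^{-\frac16}|k|^{-\frac56}$ from $|k|\geq 10\nu$. Your choice of threshold $C_0=4$ versus the paper's $3$, and your $L^\infty$ kernel bound versus the paper's Lemma \ref{lemma sinh}, are immaterial variations of the identical argument.
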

\begin{proof}
For $|\la-1|\leq 3$, by \eqref{lemma: wL2FL2}, we have
\begin{align*}
|c_1|\leq C\|w_{Na}\|_{L^1}\leq C\nu^{-\frac{1}{6}}| k|^{-\frac56}\|F\|_{L^2}.
\end{align*}

For $|\la-1|\geq3$, we have
\begin{align}\label{la-y}
|\la-y|\geq \frac13|\la-1|, \quad  y\in(-1,1).
\end{align}
Then, we use \eqref{lemma: wL2FL2} and \eqref{1} to get
\begin{align*}
|c_1|=\Big|\int^{1}_{-1}\frac{\sinh  k(1+y)}{\sinh 2 k} w_{Na} dy\Big|
\leq&\Big(\int^1_{-1}\Big(\frac{\sinh  k(1+y)}{(y-\la)\sinh 2 k}\Big)^2dy\Big)^{\frac12}\|(y-\la)w_{Na}\|_{L^2}\\
\leq& (\la-1)^{-1}\|(y-\la)w_{Na}\|_{L^2}\leq | k|^{-1}(\la-1)^{-1}\|F\|_{L^2}\\
\leq &\nu^{-\frac16}| k|^{-\frac56}|\la-1|^{-1}\|F\|_{L^2},
\end{align*}
where we have used the condition $10\nu\leq|k|<1$ in the last inequality.

The estimate of $c_2$ can be obtained by a similar way. Thus, we complete the proof of Lemma \ref{lemma: c1,c2 FL2}.
\end{proof}

\begin{lemma}\label{lemma: c1,c2 FH-1}
Let $c_1$ and $c_2$ be defined as in \eqref{def: c1c2}, $\la\in \R$, $F\in H^{-1}(I)$. For $10\nu\leq |k|<1$, it holds
\begin{align*}
(1+|(\la-1)|)^{\frac34}|c_1|+(1+|(\la+1)|)^{\frac34}|c_2|\leq C\nu^{-\frac12}| k|^{-\frac12}\|F\|_{H^{-1}}.
\end{align*}
\end{lemma}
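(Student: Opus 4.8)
The plan is to apply the weak resolvent estimate of Proposition \ref{lemma: weak resolvent estimate} directly to the two integrals in \eqref{def: c1c2}. Write $f_1(y)=\frac{\sinh k(1+y)}{\sinh 2k}$ and $f_2(y)=\frac{\sinh k(1-y)}{\sinh 2k}$, so that $c_1=\langle w_{Na},f_1\rangle$ and $c_2=\langle w_{Na},f_2\rangle$ with $w_{Na}$ as in Proposition \ref{pro: reso esti FL2}. These kernels are smooth and solve $f_i''-k^2f_i=0$; a direct check gives $0\leq f_i\leq 1$ and $\|f_i'\|_{L^\infty}\leq C$ uniformly for $|k|<1$ (the ratios stay bounded even as $k\to 0$). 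Since $f_1(-1)=0$, $f_1(1)=1$ and $f_2(1)=0$, $f_2(-1)=1$, the kernels meet the hypotheses of Proposition \ref{lemma: weak resolvent estimate} with $j=1$ for $f_1$ and $j=-1$ for $f_2$. I will carry out the estimate for $c_1$; the bound for $c_2$ then follows from the reflection $y\mapsto -y$.

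First I would treat the near regime $|\la-1|\leq 3$, where the weight $(1+|\la-1|)^{\frac34}$ is bounded and it suffices to prove $|c_1|\leq C\nu^{-\frac12}|k|^{-\frac12}\|F\|_{H^{-1}}$. Inserting $f_1$ into Proposition \ref{lemma: weak resolvent estimate} and using $\|f_1\|_{L^\infty}\leq 1$, $|f_1(1)|=1$, together with the elementary bounds $\|f_1\chi\|_{L^2}\lesssim\delta^{-\frac12}$ and $\|f_1\chi\|_{H^1}\lesssim\delta^{-\frac32}$ (which come from $|\chi|\lesssim|y-\la|^{-1}$ in the outer region and $|\chi|+\delta|\chi'|\lesssim\delta^{-1}$ in the inner region, with $|f_1'|\lesssim 1$), every bracket term is dominated by $\delta^{-\frac32}$. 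Since $\delta=\nu^{\frac13}|k|^{-\frac13}$ gives $|k|^{-1}\delta^{-\frac32}=\nu^{-\frac12}|k|^{-\frac12}$, this settles the near regime.

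The decisive regime is $|\la-1|\geq 3$, where the weight grows. The key point is that $\delta<1$ (because $\nu<|k|$), so the critical point $y=\la$ has left the closed channel and $(-1,1)\cap(\la-\delta,\la+\delta)=\emptyset$; hence the dangerous $\delta^{-\frac32}$ sup-term of Proposition \ref{lemma: weak resolvent estimate} vanishes. On the surviving terms I would use $|y-\la|\geq\frac13|\la-1|$ for $y\in(-1,1)$ as in \eqref{la-y}, which forces $\chi=(y-\la)^{-1}$ throughout and gives $\|f_1\chi\|_{H^1}+\|f_1\chi\|_{L^2}\lesssim|\la-1|^{-1}$, while the boundary term equals $(|1-\la|+\delta)^{-\frac34}\delta^{-\frac34}\approx|\la-1|^{-\frac34}\delta^{-\frac34}$. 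Multiplying through by $(1+|\la-1|)^{\frac34}\approx|\la-1|^{\frac34}$ turns the boundary term into $\lesssim\delta^{-\frac34}$ and the $\chi$-terms into $\lesssim\delta^{-1}$, so after the prefactor $|k|^{-1}$ they are bounded by $\nu^{-\frac14}|k|^{-\frac34}$ and $\nu^{-\frac13}|k|^{-\frac23}$ respectively, both of which are $\leq\nu^{-\frac12}|k|^{-\frac12}$ under $10\nu\leq|k|<1$. This yields the weighted bound for $c_1$, and the symmetric computation with $f_2$, $j=-1$ gives that for $c_2$.

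The hard part is the bookkeeping in the far regime: one must recognize that the growth of the weight $(1+|\la-1|)^{\frac34}$ is exactly absorbed by the $(|j-\la|+\delta)^{-\frac34}$ decay of the boundary term, which is precisely what pins the admissible weight exponent at $\frac34$ (in contrast to the exponent $1$ in Lemma \ref{lemma: c1,c2 FL2}, where the available $\|(y-\la)w\|_{L^2}$ bound supplies one full power), while simultaneously the $\delta^{-\frac32}$ sup-term must be seen to drop out once $y=\la$ exits the channel. The remaining ingredients—uniform-in-$k$ control of the exponential kernels $f_i$ and the routine $L^2$/$H^1$ estimates of $f_i\chi$—are straightforward.
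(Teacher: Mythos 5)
Your proposal is correct and follows essentially the same route as the paper: both apply Proposition \ref{lemma: weak resolvent estimate} with $f(y)=\frac{\sinh(k(1+y))}{\sinh 2k}$ and $j=1$, split into $|\la-1|\leq 3$ (where every term is dominated by $\delta^{-\frac32}$ and $|k|^{-1}\delta^{-\frac32}=\nu^{-\frac12}|k|^{-\frac12}$) and $|\la-1|\geq 3$ (where $(-1,1)\cap(\la-\delta,\la+\delta)=\varnothing$ kills the sup-term and the $(|1-\la|+\delta)^{-\frac34}$ decay absorbs the weight), with the symmetric argument for $c_2$. Your observation that the exponent $\frac34$ is pinned by the boundary term's decay, versus the full power available in Lemma \ref{lemma: c1,c2 FL2}, is accurate but the mechanics coincide with the paper's proof.
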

\begin{proof}
We divide the proof into two cases. 
\vspace{0.1cm}

\textbf{Case 1.} $\boldsymbol{|\la-1|\geq 3}$. Choosing $\delta=\nu^{\frac13}|k|^{-\frac13}$, we have $(-1,1)\cap(\la-\delta,\la+\delta)=\varnothing$. Applying Proposition \ref{lemma: weak resolvent estimate} with $f(y)=\frac{\sinh( k(1+y))}{\sinh(2 k)}$ and $j=1$, we obtain
\begin{equation}\label{c1H-1}
\begin{aligned}
|c_1|=&|\lan w_{Na},f\ran|\\
\leq&C| k|^{-1}\|F\|_{H^{-1}}\Big((|1-\la|+\delta)^{-\frac34}\delta^{-\frac34}+\Big\|\frac{f(y)}{y-\la}\Big\|_{H^1}+\delta^{-1} \Big\|\frac{f(y)}{y-\la}\Big\|_{L^2}\Big).
\end{aligned}
\end{equation}
It follows from Lemma \ref{lemma sinh} that
\begin{align*}
\|f(y)\|_{L^2}\leq \Big\|\frac{\sinh( k(1+y))}{\sinh(2 k)}\Big\|_{L^\infty}\leq C,\quad \|f'(y)\|_{L^2}=\Big\|k\frac{\cosh( k(1+y))}{\sinh(2 k)}\Big\|_{L^2}\leq C,
\end{align*}
which together with \eqref{la-y} yields
\begin{equation}\label{fyy-laL2}
\begin{aligned}
\Big\|\frac{f(y)}{y-\la}\Big\|_{L^2}\leq& \|f\|_{L^2}\Big\|\frac{1}{y-\la}\Big\|_{L^\infty}\lesssim |1-\la|^{-1},\\
\Big\|\frac{f(y)}{y-\la}\Big\|_{H^1}\leq& \|f'\|_{L^2}\Big\|\frac{1}{y-\la}\Big\|_{L^\infty}+\|f\|_{L^\infty}\Big\|\frac{1}{(y-\la)^2}\Big\|_{L^2} +\|f\|_{L^2}\Big\|\frac{1}{y-\la}\Big\|_{L^\infty}\\
\lesssim &|1-\la|^{-1}.
\end{aligned}
\end{equation}
Inserting \eqref{fyy-laL2} into \eqref{c1H-1}, we deduce
\begin{align*}
|c_1|\lesssim &| k|^{-1}\|F\|_{H^{-1}}\big(\delta^{-\frac34}(\delta+|1-\la|)^{-\frac34}+\delta^{-1}(1+|1-\la|)^{-1}\big) \\
\lesssim &\nu^{-\frac12}| k|^{-\frac12}(1+|\la-1|)^{-\frac34}\|F\|_{H^{-1}},
\end{align*}
where we used $|1-\la|^{-1}< 2(1+|1-\la|)^{-1}$.

\textbf{Case 2.} $\boldsymbol{|\la-1|\leq 3}$. Using Proposition \ref{lemma: weak resolvent estimate} with $f(y)=\frac{\sinh( k(1+y))}{\sinh(2 k)}$ and $j=1$, we have
\begin{align*}
|c_1|=&|\lan w_{Na},f\ran|\\
\lesssim &| k|^{-1}\|F\|_{H^{-1}}\Big(\delta^{-\frac32}\|f\|_{L^\infty(E)}+(|1-\la|+\delta)^{-\frac34}\delta^{-\frac34}+\|f\chi\|_{H^1}+\delta^{-1} \|f\chi\|_{L^2}\Big),
\end{align*}
where $E=(-1,1)\cap(\la-\delta, \la+\delta)$.
Thanks to Lemma \ref{lemma sinh},  we obtain
\begin{equation}\label{f esti}
\begin{aligned}
&\|f\|_{L^\infty(E)}\leq C,\\
&\|f\chi\|_{H^1}\leq \|f'\|_{L^2}\|\chi\|_{L^\infty}+\|f\|_{L^\infty}\|\chi'\|_{L^2}+\|f\|_{L^\infty}\|\chi\|_{L^2}
\leq C\delta^{-\frac32},\\
&\|f\chi\|_{L^2}\leq \|f\|_{L^\infty}\|\chi\|_{L^2}\leq C\delta^{-\frac12}.
\end{aligned}
\end{equation}
The condition $0\leq |\la-1|\leq 3$ gives
\begin{align*}
\frac14\leq (1+|\la-1|)^{-1}\leq 1.
\end{align*}
This fact together with \eqref{f esti} implies
\begin{align*}
|c_1|\leq C| k|^{-1}\|F\|_{H^{-1}}\delta^{-\frac32}(1+|(\la-1)|)^{-\frac34}=C\nu^{-\frac12}| k|^{-\frac12}(1+|\la-1|)^{-\frac34}\|F\|_{H^{-1}}.
\end{align*}

Similarly, we can deduce the estimate of $c_2$ and hence complete the proof of Lemma \ref{lemma: c1,c2 FH-1}.
\end{proof}
\begin{proof}[Proof of Proposition \ref{lemma:non-slip boundary,resolvent}]
 Using \eqref{lemma: wL2FL2}, Propositions \ref{Lemma: Nonslip wLinfty wL1}--\ref{lemma: Nonslip weighted version} and Lemma \ref{lemma: c1,c2 FL2}, we obtain
\begin{align*}
&\|\rho^{\frac12}_k w\|_{L^2}\leq \|w_{Na}\|_{L^2}+|c_1|\|\rho^{\frac12}_k w_1\|_{L^2}+|c_2|\|\rho^{\frac12}_k w_2\|_{L^2}\lesssim \nu^{-\frac13}|k|^{-\frac23}\|F\|_{L^2},\\
&\|u\|_{L^2}\leq \|u_{Na}\|_{L^2}+|c_1|\|w_1\|_{L^1}+|c_2|\|w_2\|_{L^1}\lesssim \nu^{-\frac16}|k|^{-\frac56}\|F\|_{L^2},\\
&\|w\|_{L^2}\leq \|w_{Na}\|_{L^2}+|c_1|\|w_1\|_{L^2}+|c_2|\|w_2\|_{L^2}\lesssim  \nu^{-\frac{5}{12}}|k|^{-\frac{7}{12}}\|F\|_{L^2},
\end{align*}
which yields \eqref{wL1+wL2}.

For \eqref{wL2+rhoL2+uL2}, using \eqref{lemma: wL2,FH-1}, \eqref{esti: w1L2}, \eqref{esti: w2L2} and Lemma \ref{lemma: c1,c2 FH-1}, we deduce
\begin{align*}
\|w\|_{L^2}\leq &\|w_{Na}\|_{L^2}+|c_1|\|w_1\|_{L^2}+|c_2|\|w_2\|_{L^2}\\
\lesssim & \nu^{-\frac23}|k|^{-\frac13}\|F\|_{H^{-1}}+\nu^{-\frac{3}{4}}|k|^{-\frac{1}{4}}\|F\|_{H^{-1}}\\
\lesssim & \nu^{-\frac{3}{4}}|k|^{-\frac{1}{4}}\|F\|_{H^{-1}},
\end{align*}
where we have used that $\nu^{\frac{1}{12}}\leq |k|^{\frac{1}{12}}$.
Similarly, by \eqref{lemma: wL2,FH-1}, \eqref{rho12wL2} and Lemma \ref{lemma: c1,c2 FH-1}, we have
\begin{align*}
\|\rho^{\frac12}_k w\|_{L^2}\leq &\|\rho^{\frac12}_kw_{Na}\|_{L^2}+|c_1|\|\rho^{\frac12}_kw_1\|_{L^2}+|c_2|\|\rho^{\frac12}_kw_2\|_{L^2}
\lesssim  \nu^{-\frac{2}{3}}|k|^{-\frac{1}{3}}\|F\|_{H^{-1}}.
\end{align*}
Thanks to \eqref{esti: psiL2omL2}, \eqref{lemma: wL2,FH-1}, \eqref{esti: w1+w2L1} and Lemma \ref{lemma: c1,c2 FH-1}, we obtain
\begin{align*}
\|u\|_{L^2}\leq &\|u_{Na}\|_{L^2}+|c_1|\|w\|_{L^1}+|c_2|\|w_2\|_{L^1}
\lesssim \nu^{-\frac12}|k|^{-\frac12}\|F\|_{H^{-1}}.
\end{align*}
Combining the above three estimates, we complete the proof of \eqref{wL2+rhoL2+uL2}.
\end{proof}
\section{Space-Time Estimates of the Linearized Navier-Stokes Equations}\label{sec: space-time estimate}
In order to derive the nonlinear stability and enhanced dissipation, we first set up the space-time estimates for the linearized Navier-Stokes system
based on the resolvent estimate established in Section \ref{sec: reso esti}. Consider
\begin{align}\label{equ: om}
\pa_t\om+\mathcal{L}\om=-i k f_1-\pa_y f_2, \qquad \om(0)=\om^{in}( k,y),
\end{align}
where $\om=(\pa^2_y - k^2)\psi$ with $\psi(\pm 1)=\psi'(\pm 1)=0$. 
Denote
\begin{align*}
\|f\|_{L^p H^s}=\bn{\|f(t,y)\|_{H^s_y(I)}}_{L^p_t(\R^{+})}, \quad \|f\|_{L^pL^q}=\bn{\|f(t,y)\|_{L^q_y(I)}}_{L^p_t(\R^{+})}.
\end{align*}

The space-time estimates of the solution to \eqref{equ: om} will be established separately on the following three intervals:
\begin{itemize}
\item $\{k: 0\leq |k|<10\nu\}$,
\item $\{k: 10\nu\leq |k|<1\}$,
\item $\{k:  |k|\geq 1\}$.
\end{itemize}

For $|k|\geq 1$, the space-time estimates of the solution to \eqref{equ: om} are proved in \cite{Chen-Li-Wei-Zhang} as follows:
\begin{equation}
\begin{aligned}\label{Th nuxi2leq1 k>1}
&\|(1-|y|)^{\frac12}\om\|^2_{L^\infty L^2}+(\nu k^2)^{\frac12}\|\om\|^2_{L^2L^2}+|k|^2\|u\|^2_{L^2 L^2}+| k|\|u\|^2_{L^\infty L^\infty}\\
\leq& C\min\{\nu^{-\frac13}|k|^{\frac43},\nu^{-1}\}\|f_1\|^2_{L^2 L^2}+C\nu^{-1}\|f_2\|^2_{L^2 L^2}+\|\om^{in}\|^2_{L^2}+|k|^{-2}\|\pa_y\om^{in}\|^2_{L^2},
\end{aligned}
\end{equation}
where $u=(\pa_y,-ik)(\pa^2_y-k^2)^{-1}\om$ and $f_1, f_2\in L^2 L^2$.
\vspace{0.1cm}

Now, we address the remaining cases in Subsections \ref{subsec: 0-10nu} and \ref{subsec: 10nu-1}.
\subsection{Space-time estimates of low frequencies $0\leq |k|<10\nu$}\label{subsec: 0-10nu}
We shall utilize energy method to obtain the space-time estimates in frequency range $0\leq |k|<10\nu$. Decomposing the stream function and using the optimal Sobolev constant from Wirtinger's inequality, we obtain the maximal frequency range $0\leq |k|<\frac{\pi^3}{3}\nu$. In this case, we gain the the space-time estimates uniformly with respect to $k$.
\begin{theorem}\label{Th xi small}
Let $0\leq| k|< 10\nu$ and $\om$ be a solution of \eqref{equ: om} with $\om^{in}\in L^2(I)$ and $f_1, f_2\in L^2 L^2$. Then there exists a constant $C>0$ independent of $\nu,  k$ such that
\begin{align*}
&\|\om\|^2_{L^\infty L^2}+\nu\|u\|^2_{L^2 L^2}+\|u\|^2_{L^\infty L^\infty}+\nu\|\om\|^2_{L^2L^2}\\
\leq &C\nu^{-\frac13}| k|^{\frac43}\|f_1\|^2_{L^2 L^2}+C\nu^{-1}\|f_2\|^2_{L^2 L^2}+\|\om^{in}\|^2_{L^2},
\end{align*}
where $u=(\pa_y,-ik)(\pa^2_y-k^2)^{-1}\om$.
\end{theorem}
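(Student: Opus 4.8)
The plan is to run a direct energy estimate for the velocity, pairing the vorticity equation \eqref{equ: om} with the stream function $\bar\psi$ in $L^2_y(I)$. Since $\psi$ and $\psi'$ both vanish at $y=\pm1$, every boundary term produced by integration by parts cancels, and one is left with the clean identity
\begin{align}\label{plan:en}
\tfrac12\tfrac{d}{dt}\|u\|_{L^2}^2 + \nu\|\om\|_{L^2}^2 = k\,\im\lan\pa_y\psi,\psi\ran + \re\lan ik f_1+\pa_y f_2,\psi\ran ,
\end{align}
in which the dissipation is the full $\nu\|\om\|_{L^2}^2$ (because $\lan(\pa_y^2-k^2)^2\psi,\psi\ran=\|\om\|_{L^2}^2$), the transport collapses to the single term $k\,\im\lan\pa_y\psi,\psi\ran$ (the pairing $\lan\pa_y\psi,\psi\ran$ is purely imaginary, as $\re\lan\pa_y\psi,\psi\ran=\tfrac12[|\psi|^2]_{-1}^1=0$), and the $f_2$-forcing is moved onto $\psi'$ with no boundary contribution.

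The heart of the argument, and the step I expect to be hardest, is absorbing the transport term into the dissipation with a constant sharp enough to reach $|k|<10\nu$. The crude bound $|k\,\im\lan\pa_y\psi,\psi\ran|\le |k|\,\|\pa_y\psi\|_{L^2}\|\psi\|_{L^2}$ followed by two applications of Wirtinger's inequality ($\|\psi\|\le\tfrac{2}{\pi}\|\pa_y\psi\|$ and $\|\pa_y\psi\|\le\tfrac{2}{\pi}\|\pa_y^2\psi\|$, each legitimate because $\psi$ and $\psi'$ vanish at $\pm1$) only gives $\|\pa_y\psi\|\|\psi\|\le\tfrac{8}{\pi^3}\|\pa_y^2\psi\|^2\le\tfrac{8}{\pi^3}\|\om\|^2$, i.e. absorption for $|k|<\tfrac{\pi^3}{8}\nu\approx3.88\nu$, which is short of $10\nu$. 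The two Wirtinger equalities cannot be attained simultaneously, so the honest \emph{joint} extremal problem $\sup_\psi \|\pa_y\psi\|\,\|\psi\|/\|\pa_y^2\psi\|^2$ has a strictly smaller value; expanding $\psi$ in the $L^2(-1,1)$ eigenbasis and solving the associated Euler--Lagrange problem one computes the sharp constant $\tfrac{3}{\pi^3}$, whence $|k\,\im\lan\pa_y\psi,\psi\ran|\le\tfrac{3|k|}{\pi^3}\|\om\|^2\le\tfrac{30}{\pi^3}\,\nu\|\om\|^2$ with $\tfrac{30}{\pi^3}<1$. This is precisely the division point $\tfrac{\pi^3}{3}\nu\approx10.33\nu$, so for $0\le|k|<10\nu$ the transport is strictly dominated and a positive fraction $\big(1-\tfrac{30}{\pi^3}\big)\nu\|\om\|^2$ of the dissipation survives.

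For the forcing in \eqref{plan:en} I would use Young's inequality together with the Poincaré bounds $\|\psi\|_{L^2},\|\pa_y\psi\|_{L^2}\lesssim\|\om\|_{L^2}$: the $f_2$-term gives $|\lan f_2,\pa_y\psi\ran|\le\tfrac{\va}{2}\nu\|\om\|^2+C\nu^{-1}\|f_2\|^2$, and the $f_1$-term gives $|k|\,\|f_1\|\,\|\psi\|\le\tfrac{\va}{2}\nu\|\om\|^2+C\nu^{-1}|k|^2\|f_1\|^2$; in the low-frequency range $|k|<10\nu$ one has $\nu^{-1}|k|^2\le 10^{2/3}\,\nu^{-1/3}|k|^{4/3}$, which converts this into the stated $C\nu^{-1/3}|k|^{4/3}\|f_1\|^2$. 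Taking $\va$ small and integrating \eqref{plan:en} in time yields $\|u\|_{L^\infty L^2}^2+\nu\|\om\|_{L^2L^2}^2\lesssim \|u^{in}\|_{L^2}^2+\nu^{-1/3}|k|^{4/3}\|f_1\|_{L^2L^2}^2+\nu^{-1}\|f_2\|_{L^2L^2}^2$, with $\|u^{in}\|_{L^2}\lesssim\|\om^{in}\|_{L^2}$ by Poincaré; the velocity dissipation $\nu\|u\|_{L^2L^2}^2\le C\nu\|\om\|_{L^2L^2}^2$ then comes for free.

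It remains to control $\|\om\|_{L^\infty L^2}$ and $\|u\|_{L^\infty L^\infty}$. For the enstrophy I would pair \eqref{equ: om} with $\bar\om$: here the transport term $\re\lan iky\om,\om\ran=0$ vanishes identically, and the only delicate point is the vorticity boundary term $\nu[\pa_y\om\,\bar\om]_{-1}^1$ created by integration by parts, since $\om$ carries no boundary condition of its own. The same $L^2$-expansion resolves this: writing $\om=\sum_n c_n\om_n$ in the vorticities $\om_n=(\pa_y^2-k^2)\psi_n$ of the Stokes eigenfunctions (satisfying $-\nu(\pa_y^2-k^2)\om_n=\nu\mu_n\om_n$), which are mutually orthogonal in $L^2$ because the clamped conditions on the $\psi_n$ annihilate every boundary term in the relevant integrations by parts, one obtains $\re\lan-\nu(\pa_y^2-k^2)\om,\om\ran=\nu\sum_n\mu_n|c_n|^2\|\om_n\|^2\ge \nu\pi^2\|\om\|_{L^2}^2\ge0$, so the hidden boundary term is in fact harmless and the enstrophy estimate closes, giving $\|\om\|_{L^\infty L^2}^2\lesssim\|\om^{in}\|^2+(\text{same forcing})$ after the analogous Young/Poincaré treatment of $f_1,f_2$. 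Finally, Agmon's inequality $\|u\|_{L^\infty_y}^2\lesssim\|u\|_{L^2_y}\|u\|_{H^1_y}\lesssim\|u\|_{L^2_y}\|\om\|_{L^2_y}$ (using $\|u\|_{H^1_y}\lesssim\|\om\|_{L^2}$) combined with the two bounds already established gives $\|u\|_{L^\infty L^\infty}^2\lesssim\|u\|_{L^\infty L^2}\|\om\|_{L^\infty L^2}\lesssim$ the right-hand side, completing the estimate.
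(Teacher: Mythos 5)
Your Step 1 is essentially the paper's own argument: the same energy identity obtained by testing \eqref{equ: om} against $\bar\psi$, the same expansion in the sine basis $\varphi_j(y)=\sin(\pi j(y+1)/2)$, the same constant $\tfrac{3}{\pi^3}$ identifying the division point near $10\nu$, and the same Young/Poincar\'e treatment of $f_1,f_2$ (the paper likewise converts $\nu^{-1}|k|^2$ into $\nu^{-\frac13}|k|^{\frac43}$ on $|k|<10\nu$). One caveat: the paper never bounds the product $\|\psi'\|_{L^2}\|\psi\|_{L^2}$; it bounds the pairing $\int_{-1}^{1}\psi'\bar\psi\,dy$ itself, exploiting that the diagonal contribution $\int_{-1}^{1}(a_1\varphi_1)'\overline{a_1\varphi_1}\,dy=0$ vanishes before any Cauchy--Schwarz is applied. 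Your stronger assertion that the extremal problem $\sup_\psi\|\psi'\|\,\|\psi\|/\|\psi''\|^2$ has sharp value $\tfrac{3}{\pi^3}$ is not established by this computation and is not needed; you should bound $\im\lan\psi',\psi\ran$ directly through the basis decomposition, as you in fact set up.

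The genuine gap is Step 2. The direct enstrophy estimate fails precisely because of the boundary term $\nu\,\re\big[\pa_y\om\,\bar\om\big]_{-1}^{1}$, and the eigenfunction expansion does not repair it. Your orthogonality observation is correct: integrating by parts against the clamped $\psi_m$ gives $\lan\om_n,\om_m\ran=\mu_n\lan u_n,u_m\ran$, so the Stokes vorticities are $L^2$-orthogonal. But the identity $\re\lan-\nu(\pa^2_y-k^2)\om,\om\ran=\nu\sum_n\mu_n|c_n|^2\|\om_n\|^2$ requires differentiating the series term by term, i.e.\ $\om(t)$ must lie in the \emph{domain} of the vorticity Stokes operator, which demands not only the constraints $\lan\om,e^{\pm ky}\ran=0$ but the additional boundary compatibility $\big[\pa_y\om\,e^{\pm ky}\mp k\,\om\,e^{\pm ky}\big]_{-1}^{1}=0$. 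The actual solution of \eqref{equ: om} does not satisfy this: the transport term $iky\om$ and the forcing generate vorticity at the wall at each instant --- this is exactly the boundary-layer mechanism that makes the non-slip problem hard. Concretely, $\lan-\nu(\pa^2_y-k^2)\om,\om_m\ran$ and $\nu\mu_m c_m\|\om_m\|^2$ differ by $\nu\big[\pa_y\om\,\bar\om_m-\om\,\pa_y\bar\om_m\big]_{-1}^{1}$, which is neither zero nor sign-definite, and these discrepancies do not cancel upon summation; so the ``hidden boundary term'' is not harmless, it is the whole obstruction. A second uncontrolled boundary term $\big[f_2\,\bar\om\big]_{-1}^{1}$ appears when you move $\pa_y$ off $f_2$, since $f_2\in L^2L^2$ carries no trace information. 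The paper circumvents all of this with a different idea: it tests the equation against $F_1=\pa_t\psi+iky\psi$, which is \emph{clamped} ($F_1(\pm1)=\pa_yF_1(\pm1)=0$), so every integration by parts is boundary-term-free, and $\tfrac{\nu}{2}\tfrac{d}{dt}\|\om\|^2_{L^2}$ emerges from the term $\nu\lan\om,\pa_t\om+iky\om+2ik\pa_y\psi\ran$; the condition $|k|<10\nu$ is then used once more to absorb $32(\nu k+|k|^2)\|\om\|^2_{L^2}$ via the $\nu\|\om\|^2_{L^2L^2}$ bound from Step 1. Without this (or an equivalent resolvent-based substitute), your $\|\om\|_{L^\infty L^2}$ bound does not close; your final Agmon step for $\|u\|_{L^\infty L^\infty}$ would be fine once it does.
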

\begin{proof}
For $k=0$, from \cite{Chen-Li-Wei-Zhang}, we obtain
\begin{align*}
\|\om\|^2_{L^\infty L^2}+\nu\|u\|^2_{L^2 L^2}+\|u\|^2_{L^\infty L^\infty}+\nu\|\om\|^2_{L^2L^2}
\leq C\nu^{-1}\|f_2\|^2_{L^2 L^2}+\|\om^{in}\|^2_{L^2}.
\end{align*}
For $0<|k|<10\nu$, we divide the proof into two steps.
\vspace{0.1cm}

\textbf{Step 1.} The estimates of $\|u\|^2_{L^2 L^2}$ and $\|\om\|^2_{L^2 L^2}$.
Taking $L^2$ inner product of \eqref{equ: om}, we get
\begin{align}\label{inte psi}
\lan (\pa_t-\nu(\pa^2_y- k^2)+i k y)\om, -\psi \ran=\lan-i k f_1-\pa_y f_2,-\psi\ran.
\end{align}
Integrating by parts, we have
\begin{align*}
&\lan \pa_t u,u\ran+\nu\|\om\|^2_{L^2}+i k \int^1_{-1}\psi'\overline{\psi} dy +i k \int^1_{-1}y|\psi|^2 dy+i k^3\int^1_{-1}y|\psi|^2 dy\\
=&\lan -i k f_1-\pa_y f_2,-\psi\ran=\lan i k f_1,\psi\ran-\lan f_2,\pa_y \psi\ran.
\end{align*}
It follows by taking the real part of the above equality that
\begin{equation}\label{esti: uL2 nusmall}
\begin{aligned}
    \frac12\frac{d}{dt}\|u\|^2_{L^2}+\nu\|\om\|^2_{L^2}=-\operatorname{Re}\left(i k \int^1_{-1}\psi'\overline{\psi} dy\right)+\operatorname{Re}(\lan i k f_1,\psi\ran-\lan f_2,\pa_y \psi\ran).
\end{aligned}
\end{equation}

We first deal with the term $-\operatorname{Re}\left(i k \int^1_{-1}\psi'\overline{\psi} dy\right)$. Let
\begin{align}\label{basis}
\varphi_j(y)=\sin(\pi j(y+1)/2),\quad j\in \Z_{+}.
\end{align}
$\{\varphi_j\}_{j\geq 1}$ forms an orthogonal basis in $L^2([-1,1])$, and so we have
$$\psi=\sum_{j=1}^{+\infty} \lan \varphi_j, \psi\ran\varphi_j=:\sum_{j=1}^{+\infty}a_j \varphi_j.$$
We decompose $\int^1_{-1}\psi'\overline{\psi} dy$ into
\begin{equation}\label{intpsipsi'}
\begin{aligned}
    \int_{-1}^{1}\psi' \overline{\psi} dy=&\int_{-1}^{1} \Big(a_1\varphi_1+\sum_{j\geq 2}a_j\varphi_j\Big)' \Big(\overline{a}_1\varphi_1+\sum_{j\geq 2}\overline{a}_j\varphi_j\Big) dy\\
    =&\int_{-1}^{1}\Big(\sum_{j\geq 2}a_j\varphi_j\Big)'\sum_{j\geq 2}\overline{a}_j\varphi_j
    +(a_1\varphi_1)' \sum_{j\geq 2}\overline{a}_j\varphi_j+
    \Big(\sum_{j\geq 2}a_j\varphi_j\Big)' \overline{a}_1\varphi_1dy,
\end{aligned}
\end{equation}
where we have used that
\begin{align*}
\int^1_{-1}(a_1\varphi_1)' \overline{a}_1\varphi_1 dy =\int^1_{-1}|a_1|^2 \sin(\pi (y+1)/2)\cos(\pi (y+1)/2)dy=0.
\end{align*}
Due to
\begin{align*}
\psi''=\sum_{j=1}^{+\infty}\Big\lan \psi'', \sin\frac{\pi j(y+1)}{2}\Big\ran \sin\frac{\pi j(y+1)}{2}=:\sum_{j=1}^{+\infty}\psi''_j,
\end{align*}
it follows from the boundary condition $\psi(\pm1)=\psi'(\pm1)=0$ that
\begin{align*}
\Big\lan \psi'', \sin\frac{\pi j(y+1)}{2}\Big\ran=&\psi'\sin\frac{\pi j(y+1)}{2}\Big|^{1}_{-1}-\frac{\pi j}{2}\Big\lan \psi', \cos\frac{\pi j(y+1)}{2}\Big\ran\\
=& -\frac{\pi j}{2}\psi\cos\frac{\pi j(y+1)}{2}\Big|^{1}_{-1}-\Big(\frac{\pi j}{2}\Big)^2\Big\lan \psi, \sin\frac{\pi j(y+1)}{2}\Big\ran\\
=& -\Big(\frac{\pi j}{2}\Big)^2\Big\lan \psi, \sin\frac{\pi j(y+1)}{2}\Big\ran.
\end{align*}
This equality implies
\begin{align*}
\psi''=-\sum^{+\infty}_{j=1}\Big(\frac{\pi j}{2}\Big)^2a_j\varphi_j ,\quad \Big\|\sum_{j\geq 2}\psi''_j\Big\|_{L^2}=\Big(\sum_{j\geq 2}^{+\infty}\Big|\Big(\frac{\pi j}{2}\Big)^2a_j \Big|^2\Big)^{\frac{1}{2}},\quad \|\psi''_1\|_{L^2}=\frac{\pi^2}{4}|a_1|.
\end{align*}
Using H\"older's inequality, we have
\begin{equation}\label{esti: Pgeq2P1}
\begin{aligned}
   \int_{-1}^{1} \psi_1' \sum_{j\geq 2}\overline{\psi}_j dy\leq&  \Big\|\frac{\pi}{2}a_1 \cos\frac{\pi(y+1)}{2}\Big\|_{L^2} \Big\|\sum_{j\geq 2}\overline{a}_j \varphi_j\Big\|_{L^2}= \frac{\pi}{2}|a_1 |\Big(\sum_{j\geq 2}|a_j|^2\Big)^{\frac12}\\
\leq &   \frac{1}{4}\left(\frac{2}{\pi}\right)^{3}\|(\psi_1)''\|_{L^2}
   \Big\|\Big(\sum_{j\geq 2}\psi_j\Big)''\Big\|_{L^2}\\
   \leq&\frac{1}{4}\left(\frac{2}{\pi}\right)^{3}\left(\frac{1}{2}\|\psi''_1\|_{L^2}^{2}+\frac{1}{2}\Big\|\sum_{j\geq 2}\psi''_j\Big\|_{L^2}^{2}\right)
   =\frac{1}{\pi^{3}}\|\psi''\|^2_{L^2}.
\end{aligned}
\end{equation}
Integrating by parts, we directly get
\begin{equation}\label{esti: P1Pgeq2}
\begin{aligned}
   \int_{-1}^{1} \Big(\sum_{j\geq 2}\psi_j\Big)' \overline{\psi}_1dy
   = - \int_{-1}^{1} \sum_{j\geq 2}\psi_j(\overline{\psi}_1)'dy
   \leq\frac{1}{\pi^{3}}   \|\psi''\|^2_{L^2}.
\end{aligned}
\end{equation}
Similarly, we derive
\begin{equation}
\begin{aligned}\label{esti: Pgeq2geq2}
   \int_{-1}^{1} \Big(\sum_{j\geq 2}\psi_j\Big)' \sum_{j\geq 2}\overline{\psi}_jdy\leq& \Big\|\Big(\sum_{j\geq 2}a_j \varphi_j\Big)'\Big\|_{L^2} \Big\|\sum_{j\geq 2}\overline{a}_j \varphi_j\Big\|_{L^2} \\
   =&\Big\|\sum_{j\geq 2}a_j \frac{\pi j}{2}\cos(\frac{\pi j(y+1)}{2})\Big\|_{L^2}\Big(\sum_{j\geq 2}|a_j|^2\Big)^{\frac12}\\
   \leq&\Big(\sum_{j\geq 2}|\frac{\pi j}{2}a_j|^2\Big)^{\frac12}\Big(\sum_{j\geq 2}|a_j|^2\Big)^{\frac12}\leq \frac{1}{\pi^{3}}
   \|P^{y}_{j\geq 2}(\psi'')\|^2_{L^2}.
\end{aligned}
\end{equation}
Inserting \eqref{esti: Pgeq2P1}, \eqref{esti: P1Pgeq2} and \eqref{esti: Pgeq2geq2} into \eqref{intpsipsi'}, we obtain
\begin{align*}
    \int_{-1}^{1}\psi' \overline{\psi} dy\leq \frac{3}{\pi^3}\|\psi''\|^2_{L^2}.
\end{align*}
This inequality together with $0<| k|< 10\nu$ and \eqref{psi2L2} implies
\begin{align}\label{ineq: intpsi}
\left|\operatorname{Re}\left(i k \int^1_{-1}\psi'\overline{\psi} dy\right)\right|\leq \frac{3}{\pi^3}| k|\|\psi''\|^2_{L^2}\leq \frac{30}{\pi^3}\nu\|\om\|^2_{L^2}.
\end{align}

For the term $\operatorname{Re}(\lan i k f_1,\psi\ran-\lan f_2,\pa_y \psi\ran)$, we use $0< | k|< 10\nu$ and \eqref{esti: psiL2omL2} to get
\begin{equation}
\begin{aligned}\label{ineq: psiL2 xismall}
&|\operatorname{Re}(\lan i k f_1,\psi\ran-\lan f_2,\pa_y \psi\ran)|\\
\leq &C\nu^{-\frac13}| k|^{\frac43}\|f_1\|^2_{L^2}+C\nu^{-1}\|f_2\|^2_{L^2} + 10^{-3}\nu^{\frac13} | k|^{\frac23}\|\psi\|^2_{L^2}+10^{-3}\nu\|\psi'\|^2_{L^2}\\
\leq &C\nu^{-\frac13}| k|^{\frac43}\|f_1\|^2_{L^2}+C\nu^{-1}\|f_2\|^2_{L^2}+0.02\nu \|\om\|^2_{L^2}.
\end{aligned}
\end{equation}

Inserting \eqref{ineq: intpsi} and \eqref{ineq: psiL2 xismall} into \eqref{esti: uL2 nusmall}, we deduce
\begin{align}\label{dtuL2}
\frac{d}{dt}\|u\|^2_{L^2}+\nu\|\om\|^2_{L^2}\lesssim \nu^{-\frac13}| k|^{\frac43}\|f_1\|^2_{L^2}+\nu^{-1}\|f_2\|^2_{L^2}.
\end{align}
Integrating with respect to $t$, we further obtain
\begin{align}\label{nuxi2geq1 uL2omL2}
\|u\|^2_{L^\infty L^2}+\nu \|\om\|^2_{L^2L^2}\leq C\nu^{-\frac13}| k|^{\frac43}\|f_1\|^2_{L^2L^2}+C\nu^{-1}\|f_2\|^2_{L^2L^2}+\|u^{in}\|^2_{L^2}.
\end{align}
Due to \eqref{esti: psiL2omL2},
we arrive at
\begin{align}\label{nuxi2geq1 uLtinftyuLt2}
\nu\|u\|^2_{L^2L^2}+\nu\|\om\|^2_{L^2L^2}\leq C\nu^{-\frac13}| k|^{\frac43}\|f_1\|^2_{L^2L^2}+C\nu^{-1}\|f_2\|^2_{L^2L^2}+\|u^{in}\|^2_{L^2}.
\end{align}

\textbf{Step 2.} The estimates of $\|\om\|_{L^\infty L^2}$ and $\|u\|_{L^\infty L^\infty}$.
Let $F_1=\pa_t \psi+i k y \psi$. It satisfies
\begin{align*}
(\pa^2_y- k^2)F_1 =\pa_t \om+iky\om+2i k \pa_y \psi,\quad F_1(\pm 1)=\pa_y F_1(\pm 1)=0.
\end{align*}
Integrating by parts, we have
\begin{align*}
&\lan i k f_1, F_1\ran-\lan f_2,\pa_y F_1\ran=\lan -i k f_1-\pa_y f_2, -F_1\ran \\
=&\lan (\pa_t-\nu(\pa^2_y- k^2)+i k y)\om,-F_1\ran\\
=&\lan (\pa^2_y- k^2)F_1-2i k\pa_y \psi-\nu(\pa^2_y- k^2)\om,-F_1\ran\\
=&\|\pa_y F_1\|^2_{L^2}+| k|^2\|F_1\|_{L^2}+\lan 2i k\pa_y \psi, F_1\ran+\nu \lan\om,\pa_t\om+i k y \om+2i k\pa_y \psi \ran.
\end{align*}
Taking the real part and then using the Cauchy-Schwarz inequality, we obtain
\begin{equation}\label{dtomL2+FL2}
\begin{aligned}
&\frac{\nu}{2}\frac{d}{dt}\|\om\|^2_{L^2}+\|\pa_y F_1\|^2_{L^2}+| k|^2\|F_1\|^2_{L^2}\\
\leq& 2\nu | k||\lan \om, \pa_y \psi\ran|+2| k||\lan \pa_y \psi, F_1\ran|+|\lan i k f_1, F_1\ran|+|\lan f_2,\pa_y F_1\ran|\\
\leq & \nu | k|\|\om\|^2_{L^2}+\nu  k\|\pa_y \psi\|^2_{L^2}+4| k|^2\|\pa_y\psi\|^2_{L^2}+\frac{1}{4}\|F_1\|^2_{L^2}\\
&+| k|^2\|f_1\|^2_{L^2}+\frac{1}{4}\|F_1\|^2_{L^2}+\frac12\|f_2\|^2_{L^2}+\frac12\|\pa_y F_1\|^2_{L^2}.
\end{aligned}
\end{equation}
Noting that $F_1(\pm1)=0$, we get by Wirtinger's inequality
\begin{align*}
\|F_1\|^2_{L^2}\leq \frac{4}{\pi^2}\|\pa_y F_1\|^2_{L^2}.
\end{align*}
Inserting this inequality into \eqref{dtomL2+FL2} and using \eqref{esti: psiL2omL2}, we deduce
\begin{align*}
\nu \frac{d}{dt}\|\om\|^2_{L^2}\leq | k|^2 \|f_1\|^2_{L^2}+\|f_2\|^2_{L^2}+32(\nu k+| k|^2)\|\om\|^2_{L^2},
\end{align*}
which, together with $0\leq |k|<10\nu$, further implies
\begin{align*}
\|\om\|^2_{L^\infty L^2}\lesssim \nu^{-1}| k|^2\|f_1\|^2_{L^2L^2}+\nu^{-1}\|f_2\|^2_{L^2 L^2}+C\nu \|\om\|^2_{L^2L^2}+ \|\om^{in}\|^2_{L^2}.
\end{align*}
By appealing to \eqref{nuxi2geq1 uL2omL2}, we immediately get
\begin{align*}
\|\om\|^2_{L^\infty L^2}\lesssim& \nu^{-\frac13}| k|^{\frac43}\|f_1\|^2_{L^2L^2}+\nu^{-1}\|f_2\|^2_{L^2 L^2}+ \|\om^{in}\|^2_{L^2}.
\end{align*}
Using \eqref{esti: psiLinftywL2}, we have
\begin{align*}
\|\om\|^2_{L^\infty L^2}+\|u\|^2_{L^\infty L^\infty}\lesssim& \nu^{-\frac13}| k|^{\frac43}\|f_1\|^2_{L^2L^2}+\nu^{-1}\|f_2\|^2_{L^2 L^2}+ \|\om^{in}\|^2_{L^2}.
\end{align*}
This together with \eqref{nuxi2geq1 uLtinftyuLt2} completes the proof of Theorem \ref{Th xi small}.
\end{proof}
\subsection{Space-time estimates of intermediate frequencies $10\nu\leq |k|<1$}\label{subsec: 10nu-1}
Compared to the case $|k|\geq1$,  in the energy functional, the terms  including $\|u\|_{L^\infty L^\infty}$, $\|u\|_{L^2 L^2}$,$\|\om\|_{L^2 L^2}$  and the  term  $\|\om\|_{L^\infty L^2}$ eliminate factor $|k|^{\frac12}$ and factor $|k|^{\frac34}$, respectively,  so it leads to the space-time estimates more available for $10\nu\leq |k|<1$.
\begin{theorem}\label{Th: linear problem}
Let $10\nu\leq |k|<1$ and $\om$ be a solution of \eqref{equ: om} with $\om^{in}\in H^1$ and $f_1, f_2 \in L^2L^2$. It holds that
\begin{align*}
&\|(1-|y|)^{1/2}\om\|_{L^\infty L^2}+\nu^{\frac12}|k|^{-\frac12}\|\om\|^2_{L^\infty L^2}+\|u\|^2_{L^\infty L^\infty}+| k|\|u\|^2_{L^2 L^2}+\nu^{\frac12}  k^{\frac12}\|\om\|^2_{L^2 L^2}\\
\lesssim &\|\om^{in}\|^2_{L^2}+\nu^{\frac23}| k|^{-\frac23}\|\pa_y\om^{in}\|^2_{L^2}+\nu^{-\frac13}|k|^{\frac43}\|f_1\|^2_{L^2L^2}+\nu^{-1}\|f_2\|^2_{L^2L^2},
\end{align*}
where $u=(\pa_y,-ik)(\pa^2_y-k^2)^{-1}\om$.
\end{theorem}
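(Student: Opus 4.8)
The plan is to split the five quantities on the left into the two genuine space-time ($L^2_t$) terms $|k|\|u\|^2_{L^2L^2}$ and $\nu^{1/2}|k|^{1/2}\|\om\|^2_{L^2L^2}$, which I reduce to the resolvent estimates of Proposition \ref{lemma:non-slip boundary,resolvent} through a Fourier transform in time, and the three $L^\infty_t$ terms, which I obtain from energy identities and then close using the $L^2_t$ bounds. Extending $\om$ by zero to $t<0$, the Duhamel relation turns \eqref{equ: om} into $\pa_t\om+\mathcal L\om=-ikf_1-\pa_yf_2+\om^{in}\delta_{t=0}$; taking the Fourier transform in $t$ and setting $\tau=-k\la$ converts $\mathcal L+i\tau$ into the resolvent operator $-\nu(\pa_y^2-k^2)+ik(y-\la)$ of \eqref{equ: psij1} with $\epsilon=0$ and source $F[\la]=-ik\widehat{f_1}(-k\la)-\pa_y\widehat{f_2}(-k\la)+\om^{in}$. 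By Plancherel and $d\tau=|k|\,d\la$, each $L^2_t$ norm becomes $|k|\int_{\R}(\cdots)[\la]\,d\la$, into which I insert the pointwise-in-$\la$ resolvent bounds.

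For the forcing I use \eqref{wL1+wL2} (the $F\in L^2$ estimate) on the $f_1$-part, since $ikf_1$ is naturally an $L^2$ source, and \eqref{wL2+rhoL2+uL2} (the $F\in H^{-1}$ estimate) on the $f_2$-part, which is forced because only $f_2\in L^2L^2$ is assumed, so $\pa_yf_2$ lives in $H^{-1}$. Unwinding Plancherel, these two choices produce exactly $\nu^{-1/3}|k|^{4/3}\|f_1\|^2_{L^2L^2}$ and $\nu^{-1}\|f_2\|^2_{L^2L^2}$ for \emph{both} $|k|\|u\|^2_{L^2L^2}$ and $\nu^{1/2}|k|^{1/2}\|\om\|^2_{L^2L^2}$; one verifies that using \eqref{wL2+rhoL2+uL2} on $f_1$ instead would lose a factor $L^2=(|k|/\nu)^{2/3}$, so the $L^2$/$H^{-1}$ split is the efficient one in the regime $|k|\ge10\nu$.

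The initial data contributes $w[\la]=(\mathcal L-ik\la)^{-1}\om^{in}$, which as a source is independent of $\la$, so the uniform resolvent bound alone is not $\la$-integrable. I split $\int_{\R}\|w[\la]\|^2\,d\la$ into a tail $|\la|\ge2$ and a core $|\la|\le2$. On the tail $|y-\la|\ge|\la|/2$, whence the bound $|k|\|(y-\la)w\|_{L^2}\lesssim\|\om^{in}\|_{L^2}$ from \eqref{lemma: wL2FL2} gives $\|w[\la]\|_{L^2}\lesssim(|k||\la|)^{-1}\|\om^{in}\|_{L^2}$, square-integrable and, after multiplying by $\nu^{1/2}|k|^{3/2}$ and using $\nu\le|k|$, yielding the $\|\om^{in}\|^2$ term. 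On the core the uniform estimate of Proposition \ref{lemma:non-slip boundary,resolvent} is lossy by a factor $L$; the gain comes from the fact that $w[\la]$ concentrates in an $O(\delta)$-neighbourhood of the critical layer $y=\la$, with $\delta=\nu^{1/3}|k|^{-1/3}$, so that integrating in $\la$ trades the global resolvent loss for local information on $\om^{in}$ and produces the refined coefficient $\nu^{2/3}|k|^{-2/3}=\delta^2$ in front of $\|\pa_y\om^{in}\|^2$. Establishing this $\la$-averaged (local-smoothing) refinement of the worst-case resolvent bound is the main obstacle; it is the intermediate-frequency analogue of the initial-data analysis behind \eqref{Th nuxi2leq1 k>1}, with the factor $|k|^{-2}$ there replaced by $\delta^2$ here.

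For the $L^\infty_t$ terms I argue by energy. Pairing \eqref{equ: om} with $-\psi$ and taking the real part gives $\tfrac12\tfrac{d}{dt}\|u\|^2_{L^2}+\nu\|\om\|^2_{L^2}$ equal to the forcing and a stretching term, so $\|u\|^2_{L^\infty L^2}$ is controlled by the right-hand side once the $L^2_t$ bounds above are used on the forcing pairings. For $\|(1-|y|)^{1/2}\om\|^2_{L^\infty L^2}$ I pair with $(1-|y|)\overline{\om}$: the weight annihilates the boundary term at $y=\pm1$ (where only $\psi,\psi'$ vanish, not $\om$), the transport term $iky$ drops from the real part, and the dissipation leaves $\nu\int(1-|y|)|\pa_y\om|^2$ plus controllable lower-order pieces, so integration in time closes this norm; the weighted norm $\nu^{1/2}|k|^{-1/2}\|\om\|^2_{L^\infty L^2}$ comes from the energy identity for $F_1=\pa_t\psi+iky\psi$ exactly as in Step 2 of Theorem \ref{Th xi small}, with the intermediate-frequency powers tracked. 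Finally $\|u\|^2_{L^\infty L^\infty}$ follows from the one-dimensional Gagliardo--Nirenberg inequality $\|u\|^2_{L^\infty_y}\lesssim\|u\|_{L^2_y}\|\om\|_{L^2_y}$ (valid since $u(\pm1)=0$) combined with the velocity and weighted-vorticity bounds; here the time-pointwise trade-off between $\|u(t)\|_{L^2}$ and $\|\om(t)\|_{L^2}$ dictated by the enhanced-dissipation structure is what averts a lossy power of $\nu$, so together with the core initial-data estimate this is the most delicate point of the argument.
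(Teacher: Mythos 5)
Your reduction of the $L^2_t$ forcing terms is sound and matches the paper: both take the time Fourier transform, use the $F\in L^2$ resolvent bound \eqref{wL1+wL2} for the $-ikf_1$ source and the $F\in H^{-1}$ bound \eqref{wL2+rhoL2+uL2} for $-\pa_yf_2$, and unwind by Plancherel. But the initial-data contribution is a genuine gap, and you concede it yourself: your ``core'' region $|\la|\leq 2$ rests on an unproved $\la$-averaged (local-smoothing) refinement of the resolvent estimate, asserted to trade the loss $L=(|k|/\nu)^{1/3}$ for the coefficient $\delta^2=\nu^{2/3}|k|^{-2/3}$ on $\|\pa_y\om^{in}\|^2_{L^2}$. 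No such refinement is available in the paper, and the paper does not need one: it splits $\om=\om_I+\om_H$ and, for the homogeneous part, subtracts the \emph{explicit} profile $\om^{(1)}_H(t)=e^{-(\nu k^2)^{1/3}t}e^{-itky}\om^{in}$, whose space-time norms are computed directly (this is exactly where $\|\om^{in}\|^2_{L^2}+\nu^{2/3}|k|^{-2/3}\|\pa_y\om^{in}\|^2_{L^2}$ comes from, via $\nu\|\pa_y\om^{(1)}_H\|^2_{L^2L^2}$). The remainder is then fed back into the inhomogeneous estimate (Proposition \ref{Prop: inhomo}) with forcing $(\nu k^2-(\nu k^2)^{\frac13})\om^{(1)}_H+\nu\pa^2_y\om^{(1)}_H$, plus a boundary corrector $\om^{(3)}_H$ with $\lan\om^{(3)}_H,e^{\pm ky}\ran=-\lan\om^{(1)}_H,e^{\pm ky}\ran$, estimated through the representation $w^{(3)}_H=-\widetilde c_1 w_1-\widetilde c_2 w_2$ and Lemma \ref{claim}. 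Your zero-extension/delta-source route never produces this structure, so the central coefficient of the theorem is not established. (A smaller issue in the same spirit: for the tail $|\la|\geq2$ you invoke the bound $|k|\|(y-\la)w\|_{L^2}\lesssim\|F\|_{L^2}$ from \eqref{lemma: wL2FL2}, but that is the \emph{Navier-slip} estimate and applies only to $w_{Na}$; for the full non-slip solution you must decompose $w=w_{Na}+c_1w_1+c_2w_2$ and use the $\la$-decay of $c_1,c_2$ from Lemmas \ref{lemma: c1,c2 FL2}--\ref{lemma: c1,c2 FH-1} together with Proposition \ref{Lemma: Nonslip wLinfty wL1}.)

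The $L^\infty_t$ part also has two concrete failures. First, pairing with $(1-|y|)\overline{\om}$ does not close: the cross term from the dissipation is $\nu\operatorname{Re}\int_{-1}^1\mathrm{sgn}(y)\,\pa_y\om\,\overline{\om}\,dy=\tfrac{\nu}{2}\big(|\om(1)|^2+|\om(-1)|^2-2|\om(0)|^2\big)$, i.e.\ boundary vorticity traces, which under the non-slip condition are exactly the uncontrolled boundary-layer quantity (only $\psi,\psi'$ vanish at $\pm1$, not $\om$). This is why the paper works with $\rho_k^{3/2}\om$: the weight $\rho_k^3$ vanishes to high enough order that $(\rho_k^3)'(\pm1)=0$ kills the trace, and its second derivative is supported in the $\delta$-layer with size $\delta^{-2}$, so the commutator is absorbed by $\nu\delta^{-2}\|\rho_k^{1/2}\om\|^2_{L^2L^2}=\nu^{\frac13}|k|^{\frac23}\|\rho_k^{1/2}\om\|^2_{L^2L^2}$ --- which is precisely why that weighted norm appears in \eqref{full problem L2L2}. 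Your weight $(1-|y|)$ vanishes only to first order and leaves the trace. Second, the Gagliardo--Nirenberg step $\|u\|^2_{L^\infty_y}\lesssim\|u\|_{L^2}\|\om\|_{L^2}$ combined with your available bounds $\|u\|_{L^\infty L^2}\lesssim(\mathrm{RHS})^{1/2}$ and $\|\om\|_{L^\infty L^2}\lesssim\nu^{-\frac14}|k|^{\frac14}(\mathrm{RHS})^{1/2}$ loses a factor $\nu^{-\frac14}|k|^{\frac14}$, up to $\nu^{-\frac14}$ near $|k|=1$; the ``time-pointwise trade-off'' you gesture at is not supplied. The paper instead proves the pointwise reduction $\|u\|^2_{L^\infty L^\infty}\lesssim\nu^{\frac12}|k|^{-\frac12}\|\om\|^2_{L^\infty L^2}+\|\rho_k^{3/2}\om\|^2_{L^\infty L^2}$ (following \cite[P167]{Chen-Li-Wei-Zhang}), so the $L^\infty L^\infty$ norm rides on the same two quantities, with no extra loss. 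In short: the architecture (resolvent for $L^2_t$, energy for $L^\infty_t$) is right, but the three hard points --- initial data at intermediate frequency, the boundary trace in the weighted energy, and the lossless $L^\infty L^\infty$ bound --- are exactly the ones left open.
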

Roughly speaking, we can seek a cut-off function $\varrho$ to substitute $(1-|y|)^{\frac12}$ such that $\operatorname{supp}(1-\varrho)$ is small since $(1-|y|)^{\frac12}\om$ and $\om$ are equivalent in regions far from the boundary $\pm1$. In fact, it suffices to ensure the measure of the region $\{y:\, \varrho\leq (1-|y|)^{\frac12}$\} does not exceed the coefficient of $\|\om\|_{L^\infty L^2}$. A similar analysis as in \cite[P167]{Chen-Li-Wei-Zhang}, we obtain
\begin{align*}
&\|(1-|y|)^{\frac12}\om\|^2_{L^\infty L^2}\leq \nu^{\frac12}|k|^{-\frac12}\|\om\|^2_{L^\infty L^2}+\|\rho^{\frac32}_k\om\|^2_{L^\infty L^2},\\
&\|u\|^2_{L^\infty L^\infty}\lesssim \nu^{\frac12}|k|^{-\frac12}\|\om\|^2_{L^\infty L^2}+\|\rho^{\frac32}_k\om\|^2_{L^\infty L^2}.
\end{align*}
Due to $L=(k/\nu)^{\frac13}>1$ for $10\nu\leq |k|<1$, by a similar argument as in \cite[Proposition 6.7]{Chen-Li-Wei-Zhang}, we deduce
\begin{align*}
\|\rho^{\frac32}_k\om\|^2_{L^\infty L^2}\leq &\|\om^{in}\|^2_{L^2}+\nu^{\frac23}| k|^{-\frac23}\|\pa_y \om^{in}\|^2_{L^2}\\
&+\nu^{\frac13} | k|^{\frac23}\|\rho^{\frac12}_k\om\|^2_{L^2 L^2}
+\nu^{-\frac13}| k|^{\frac43}\|f_1\|^2_{L^2L^2} +\nu^{-1}\|f_2\|^2_{L^2L^2}.
\end{align*}
Therefore, the proof of Theorem \ref{Th: linear problem} can be reduce to estimate
\begin{equation}\label{full problem L2L2}
\begin{aligned}
&\nu^{\frac13} | k|^{\frac23}\|\rho^{\frac12}_k \om\|^2_{L^2L^2}+| k|\|u\|^2_{L^2L^2} +\nu^{\frac12} | k|^{\frac12}\|\om\|^2_{L^2 L^2}+\nu^{\frac12} | k|^{-\frac12}\|\om\|^2_{L^\infty L^2}\\
\lesssim &\|\om^{in}\|^2_{L^2}+\nu^{\frac23}| k|^{-\frac23}\|\pa_y \om^{in}\|^2_{L^2}+\nu^{-\frac13}| k|^{\frac43}\|f_1\|^2_{L^2L^2} +\nu^{-1}\|f_2\|^2_{L^2L^2}.
\end{aligned}
\end{equation}

In order to estimate \eqref{full problem L2L2}, we decompose the solution of \eqref{equ: om} as $\om=\om_{I}+\om_{H}$, where $\om_{I}$ solves
\begin{equation}\label{equ: omI}
\left\{
\begin{aligned}
&(\pa_t-\nu(\pa^2_y- k^2)+i k y)\om_{I}=-i k f_1-\pa_y f_2, \\
&\om_{I}=(\pa^2_y- k^2)\psi_{I},\quad \psi_{I}(\pm1)=\psi'_{I}(\pm1)=0,\\
&\om_{I}(0, k,y)=0,
\end{aligned}
\right.
\end{equation}
and $\om_{H}$ solves
\begin{equation}\label{equ: omH}
\left\{
\begin{aligned}
&(\pa_t-\nu(\pa^2_y- k^2)+i k y)\om_{H}=0,\\
&\om_{H}=(\pa^2_y- k^2)\psi_{H},\quad \psi_{H}(\pm1)=\psi'_{H}(\pm1)=0,\\
&\om_{H}(0, k,y)=\om^{in}( k,y).
\end{aligned}
\right.
\end{equation}

We consider the equation \eqref{equ: omI} in Fourier space
\begin{equation*}\left\{
\begin{aligned}
&-\nu (\pa^2_y- k^2)w_{I}(\la, k,y)+i k (y+\la/ k)w_{I}(\la, k,y)=-i k F_1(\la, k,y)-\pa_y F_2(\la, k,y),\\
&\lan w_I(\la, k,y), e^{\pm  k y}\ran =0,\qquad \int^{+\infty}_{-\infty}w_I(\la,k,y)dy=0.
\end{aligned}
\right.
\end{equation*}
where
\begin{align*}
&w_I(\la, k,y)=:\int^{+\infty}_0 \om_{I}(t, k,y) e^{-it\la} dt,\quad F_{j}(\la, k,y)=:\int^{+\infty}_0 f_{j}(t, k,y)e^{-it\la} dt,\quad  j=1,2.
\end{align*}

Let $\phi_1$, $\phi_2$ solve \eqref{equ: psi1} and \eqref{equ: psi2} with $\epsilon=0$, $\la$ replaced by $\la'=-\la/ k$. We define $w_j=(\pa^2_y- k^2)\phi_j\, (j=1,2)$.
Then, $w_{I}$ can be decomposed into
\begin{align*}
w_{I}(\la, k,y)=w^{(1)}_{Na}+w^{(2)}_{Na}+(c^{(1)}_1(\la)+c^{(2)}_1(\la))w_1+(c^{(1)}_2(\la)+c^{(2)}_2(\la))w_2,
\end{align*}
where
\begin{align}
&(i\la-\nu(\pa^2_y- k^2)+i k y)w^{(1)}_{Na}(\la, k,y)=-i k F_1(\la, k,y),\,\, w_{Na}^{(1)}(\pm1)=0, \label{equ: w1Na}\\
&(i\la-\nu(\pa^2_y- k^2)+i k y)w^{(2)}_{Na}(\la, k,y)=-\pa_y F_2(\la, k,y),\,\, w_{Na}^{(2)}(\pm1)=0, \label{equ: w2Na}
\end{align}
and
\begin{align*}
c^{(j)}_l(\la)=&-\int^1_{-1}\frac{\sinh  k(1+(-1)^{l+1}y)}{\sinh 2 k} w^{(j)}_{Na}(\la, k,y) dy, \quad l=1,2.
\end{align*}
Let $\om_{Na}$ be the solution of
\begin{equation}\left\{
\begin{aligned}\label{equ:omNa}
&(\pa_t-\nu (\pa^2_y- k^2)+i  k y)\om_{Na}=-i k f_1-\pa_y f_2, \\
&\om_{Na}\big|_{t=0}=0, \,\,\, \om_{Na}\big|_{y=\pm1}=0.
\end{aligned}
\right.
\end{equation}
Thus, we can rewrite
\begin{align*}
\om_{I}=\om_{Na}+\om^{(1)}_1+\om^{(2)}_1+\om^{(1)}_2+\om^{(2)}_2,
\end{align*}
where
\begin{align*}
\om^{(j)}_l(t, k,y)=\frac{1}{2\pi}\int_{\R}c^{(j)}_l(\la)w_l(\la, k,y) e^{it\la} d\la,\quad t>0, \quad j,l\in\{1,2\}.
\end{align*}
Now, we first give the space-time estimates of $\om_{I}$.
\begin{proposition}\label{Prop: inhomo}
Let $10\nu\leq |k|<1$. Assume that $\rho_k$ is defined by \eqref{def:rhok} and $\om_{I}$ is a solution of \eqref{equ: omI} with $f_1, f_2\in L^2L^2$. Then there exists a constant $C>0$ independent of $\nu$ and $ k$, such that
\begin{equation}\label{esti: inhomo}
\begin{aligned}
&\nu^{\frac13}| k|^{\frac23}\|\rho^{\frac12}_k\om_{I}\|^2_{L^2 L^2}+| k|\|u_{I}\|^2_{L^2 L^2}+\nu^{\frac12} | k|^{\frac12}\|\om_{I}\|^2_{L^2L^2}+\nu^{\frac12} | k|^{-\frac12}\|\om_{I}\|^2_{L^\infty L^2}\\
\leq& C(\nu^{-\frac13}| k|^{\frac43}\|f_1\|^2_{L^2 L^2}+\nu^{-1}\|f_2\|^2_{L^2 L^2}).
\end{aligned}
\end{equation}
\end{proposition}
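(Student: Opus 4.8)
The plan is to pass to the temporal Fourier side, where the evolution \eqref{equ: omI} becomes, at each frequency $\la$, the inhomogeneous Orr--Sommerfeld resolvent problem with shifted spectral parameter $\la'=-\la/k$ already recorded above (since $ik(y+\la/k)=ik(y-\la')$), and then to split the forcing $-ikf_1-\pa_y f_2$ according to its regularity: the $f_1$-part is an $L^2$ source while $\pa_y f_2$ is an $H^{-1}$ source. Because $\om_I(0)=0$, Plancherel's theorem in $t$ identifies every $L^2_tL^2_y$ norm on the left of \eqref{esti: inhomo} with $\frac{1}{2\pi}\int_\R(\cdot)\,d\la$ of the corresponding resolvent quantity, and likewise $\|f_j\|_{L^2L^2}^2=\frac{1}{2\pi}\int_\R\|F_j(\la)\|_{L^2}^2\,d\la$. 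Using the decomposition $\om_I=\om_{Na}+\om_1^{(1)}+\om_1^{(2)}+\om_2^{(1)}+\om_2^{(2)}$ recorded above, it then suffices to estimate the Navier-slip part and the four boundary corrections separately.

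For the Navier-slip part $\om_{Na}$ I would apply the resolvent estimates of Proposition \ref{pro: reso esti FL2} frequency-by-frequency: estimate \eqref{lemma: wL2FL2} with source $-ikF_1$ (so that $\|F\|_{L^2}=|k|\,\|F_1\|_{L^2}$), and estimate \eqref{lemma: wL2,FH-1} with source $-\pa_y F_2$ (so that $\|F\|_{H^{-1}}\le\|F_2\|_{L^2}$). A direct bookkeeping of the powers of $\nu$ and $|k|$ shows that, after integrating in $\la$ and invoking Plancherel, the resulting bounds on $\nu^{1/3}|k|^{2/3}\|\rho_k^{1/2}\om_{Na}\|_{L^2L^2}^2$, $|k|\,\|u_{Na}\|_{L^2L^2}^2$ and $\nu^{1/2}|k|^{1/2}\|\om_{Na}\|_{L^2L^2}^2$ are exactly of the claimed form, the condition $10\nu\le|k|$ entering only to discard lower-order powers via $\nu^{1/12}\le|k|^{1/12}$, as in the proof of Proposition \ref{lemma:non-slip boundary,resolvent}.

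For the boundary corrections I would use the inverse temporal Fourier representation $\om_l^{(j)}(t)=\frac{1}{2\pi}\int_\R c_l^{(j)}(\la)\,w_l\,e^{it\la}\,d\la$; by Plancherel the $L^2_tL^2_y$ contributions equal $\frac{1}{2\pi}\int_\R|c_l^{(j)}(\la)|^2\,\|(\cdot)\,w_l\|_{L^2_y}^2\,d\la$, and after the change of variables $\la=-k\la'$ I would insert the pointwise bounds on $w_l$ from Propositions \ref{Lemma: Nonslip wLinfty wL1}--\ref{lemma: Nonslip weighted version} together with the coefficient bounds of Lemmas \ref{lemma: c1,c2 FL2}--\ref{lemma: c1,c2 FH-1} (for $\om_2^{(j)}$ one simply replaces $1-\la'$ by $1+\la'$). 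The decisive structural point is that the algebraic growth in $\la'$ of $\|w_l\|_{L^2}$, $\|\rho_k^{1/2}w_l\|_{L^2}$ and $\|w_l\|_{L^1}$ is strictly dominated by the decay $(1+|\la'\mp1|)^{-1}$ of $c_l^{(j)}$ in the $L^2$-source case (respectively $(1+|\la'\mp1|)^{-3/4}$ in the $H^{-1}$-source case), so that each $\la'$-integral converges and the weight $(1+|\la'\mp1|)^{-\theta}$ may be bounded by $1$; this is precisely where the intermediate-frequency refinement of the Airy estimates is needed to make the coefficient decay sharp enough for $10\nu\le|k|<1$.

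For the $L^\infty_tL^2_y$ norm I would treat $\om_{Na}$ and the corrections differently. For $\om_{Na}$, which vanishes on $y=\pm1$, an energy estimate on \eqref{equ:omNa} closes cleanly: testing against $\overline{\om_{Na}}$, the $iky$ term drops and $-\pa_y f_2$ is absorbed into the dissipation $\nu\|\pa_y\om_{Na}\|_{L^2}^2$, giving $\|\om_{Na}\|_{L^\infty L^2}^2\lesssim|k|\,\|f_1\|_{L^2L^2}\|\om_{Na}\|_{L^2L^2}+\nu^{-1}\|f_2\|_{L^2L^2}^2$, which combined with the $L^2_tL^2_y$ bound above reproduces the factor $\nu^{1/2}|k|^{-1/2}$ after using $\nu\le|k|$. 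For each correction I would use $\|\om_l^{(j)}\|_{L^\infty_tL^2_y}\le\frac{1}{2\pi}\|c_l^{(j)}w_l\|_{L^1_\la L^2_y}$ followed by Cauchy--Schwarz in $\la'$ against the decaying weight, the product of the decay of $c_l^{(j)}$ and the growth of $\|w_l\|_{L^2}$ having to furnish an $L^2_{\la'}$ weight. I expect this last step --- controlling $\|\om_I\|_{L^\infty L^2}$, and in particular verifying that the temporal-frequency integrals for the corrections genuinely converge and reproduce the small coefficient $\nu^{1/2}|k|^{-1/2}$ --- to be the main obstacle, since it is the borderline case in the exponent bookkeeping and relies on the sharp form of Lemmas \ref{lemma: c1,c2 FL2}--\ref{lemma: c1,c2 FH-1}.
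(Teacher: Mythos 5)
Your overall architecture coincides with the paper's: take the temporal Fourier transform reducing \eqref{equ: omI} to the resolvent problem with spectral parameter $\la'=-\la/k$, split the forcing into the $L^2$ source $-ikF_1$ and the $H^{-1}$ source $-\pa_yF_2$, use Plancherel for all $L^2_tL^2_y$ quantities (the paper packages this step as a direct application of Proposition \ref{lemma:non-slip boundary,resolvent} to $w_I^{(1)}$ and $w_I^{(2)}$, which is exactly what your pointwise-in-$\la$ bookkeeping re-derives), run an energy estimate on \eqref{equ:omNa} for $\|\om_{Na}\|_{L^\infty L^2}$, and represent the boundary corrections as $\om^{(j)}_l(t)=\frac{1}{2\pi}\int_\R c^{(j)}_l(\la)w_l e^{it\la}\,d\la$. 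For the $f_1$-driven corrections your plan closes exactly as in the paper: Lemma \ref{lemma: c1,c2 FL2} gives decay $(1+|\la'\mp1|)^{-1}$, \eqref{esti: w1L2}--\eqref{esti: w2L2} give growth $(1+|\la'\mp1|)^{1/4}$, and the net weight $(1+|\la'\mp1|)^{-3/4}$ lies in $L^2_{\la}$ with norm $\lesssim|k|^{1/2}$, yielding \eqref{om11L2f1L2}.

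However, the step you yourself flag as ``the main obstacle'' is a genuine gap, and the mechanism you propose for it fails. For the $f_2$-driven corrections $\om^{(2)}_1,\om^{(2)}_2$, Lemma \ref{lemma: c1,c2 FH-1} only provides decay $(1+|\la'\mp1|)^{-3/4}$, while $\|w_l\|_{L^2}$ grows like $(1+|\la'\mp1|)^{1/4}$; the product decays like $(1+|\la'\mp1|)^{-1/2}$, which is \emph{not} in $L^2_{\la'}$ (the integral $\int(1+s)^{-1}\,ds$ diverges logarithmically), so ``Cauchy--Schwarz in $\la'$ against the decaying weight'' cannot produce the required bound $\nu^{1/2}|k|^{-1/2}\|\om^{(2)}_l\|^2_{L^\infty L^2}\lesssim\nu^{-1}\|f_2\|^2_{L^2L^2}$. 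The missing idea, which the paper supplies, is a $y$-dependent two-weight Cauchy--Schwarz: pair the coefficient decay with the weighted bound $\|\rho_k^{1/2}w_l\|_{L^2}\le CL^{1/2}$ of \eqref{rho12wL2}, which has \emph{no} growth in $\la'$ at all, and control the boundary layer where $\rho_k$ degenerates via the second weighted bound \eqref{rho-14w1L2}--\eqref{rho-14w2L2} on $\|\rho_k^{-1/4}w_l\|_{L^2}$. Concretely (cf.\ \eqref{om21}) one estimates $|\om^{(2)}_1(t,y)|^2$ by the product of a weighted $\la$-integral controlled through \eqref{c21rhow1+c21rho-14w1L2} and the inverse-weight integral $\int_\R\big(|\la+k|^{3/2}\rho_k(y)+(\nu k^2)^{1/4}|\la+k|^{3/4}\rho_k^{-1/2}(y)\big)^{-1}d\la$, whose evaluation as $C\nu^{-1/6}|k|^{-1/3}$ \emph{uniformly in} $y$ is the decisive computation; integrating in $y$ then gives $\|\om^{(2)}_1\|^2_{L^\infty L^2}\lesssim\nu^{-3/2}|k|^{1/2}\|f_2\|^2_{L^2L^2}$, i.e.\ exactly \eqref{om21L2f2L2}. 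Without this interpolation between the $\rho_k^{1/2}$- and $\rho_k^{-1/4}$-weighted estimates, your exponent bookkeeping is borderline-divergent and the proof as proposed does not close.
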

\begin{proof}
Define
$$w^{(1)}_I=:w^{(1)}_{Na}+c^{(1)}_1(\la)w_1+c^{(1)}_2(\la)w_2,\quad w^{(2)}_I=:w^{(2)}_{Na}+c^{(2)}_1(\la)w_1+c^{(2)}_2(\la)w_2,$$
which satisfy
\begin{align*}
-\nu (\pa^2_y- k^2)w^{(1)}_I+i k (y+\la/ k)w^{(1)}_I(\la, k,y)=-i k F_1(\la, k,y), \quad \lan w^{(1)}_{I}(\la, k,y), e^{\pm  k y}\ran=0,\\
-\nu (\pa^2_y- k^2)w^{(2)}_I+i k (y+\la/ k)w^{(2)}_I(\la, k,y)=-\pa_y F_2(\la, k,y),\quad \lan w^{(2)}_{I}(\la, k,y), e^{\pm  k y}\ran=0.
\end{align*}
Let $u^{(1)}_{I}=(\pa_y,ik)(\pa_y^2-k^2)w^{(1)}_{I}$, $u^{(2)}_{I}=(\pa_y,ik)(\pa_y^2-k^2)w^{(2)}_{I}$.
By Plancherel's formula and Proposition \ref{lemma:non-slip boundary,resolvent}, we deduce
\begin{align*}
&\nu^{\frac13} | k|^{\frac23}\|\rho^{\frac12}_{k}\om_{I}\|^2_{L^2L^2}+| k|\|u_{I}\|^2_{L^2L^2}+\nu^{\frac12} | k|^{\frac12}\|\om_{I}\|^2_{L^2L^2}\\
\sim & \nu^{\frac13}|k|^{\frac23}\big\|\|\rho^{\frac12}_{k}w^{(1)}_{I}\|^2_{L^2_y}\big\|^2_{L^2_{\la}}+| k|\big\|\|u^{(1)}_{I}\|^2_{L^2_y}\big\|^2_{L^2_{\la}} +\nu^{\frac12} | k|^{\frac12}\big\|\|w^{(1)}_{I}\|^2_{L^2_y}\big\|^2_{L^2_{\la}}\\
&+ \nu^{\frac13}|k|^{\frac23}\big\|\|\rho^{\frac12}_{k}w^{(2)}_{I}\|^2_{L^2_y}\big\|^2_{L^2_{\la}}+| k|\big\|\|u^{(2)}_{I}\|^2_{L^2_y}\big\|^2_{L^2_{\la}} +\nu^{\frac12} | k|^{\frac12}\big\|\|w^{(2)}_{I}\|^2_{L^2_y}\big\|^2_{L^2_{\la}}\\
\lesssim&\nu^{-\frac13}|k|^{-\frac23}\|kF_1\|^2_{L^2_{\la}L^2_y}+\nu^{-1}\|F_2\|^2_{L^2_{\la}L^2_y}\\
\sim&\nu^{-\frac13}| k|^{\frac43}\|f_1\|^2_{L^2L^2}+\nu^{-1}\|f_2\|^2_{L^2L^2}.
\end{align*}

It remains to bound $\|\om_{I}\|_{L^\infty L^2}$. By Plancherel's theorem and Proposition \ref{pro: reso esti FL2}, we obtain
\begin{equation}\label{esti: wNaL2L2 f1f2}
\begin{aligned}
(\nu  k^2)^{\frac13}\|\om_{Na}\|^2_{L^2 L^2}\leq& (\nu k^2)^{\frac13}\big(\big\|\|w^{(1)}_{Na}(\la)\|^2_{L^2_y}\big\|^2_{L^2_{\la}} +\big\|\|w^{(2)}_{Na}(\la)\|^2_{L^2_y}\big\|^2_{L^2_{\la}}\big)\\
\lesssim & \nu^{-\frac13}| k|^{\frac43}\|F_1(\la)\|^2_{L^2_{\la}L^2_y}+\nu^{-1}\|F_2(\la)\|^2_{L^2_{\la}L^2_y}\\
\sim & \nu^{-\frac13}| k|^{\frac43}\|f_1\|^2_{L^2L^2}+\nu^{-1}\|f_2\|^2_{L^2 L^2}.
\end{aligned}
\end{equation}
For $\|\om_{Na}\|_{L^\infty L^2}$, we use the energy method for \eqref{equ:omNa} to deduce
\begin{align*}
&\pa_t\|\om_{Na}\|^2_{L^2}+\nu \|\pa_y \om_{Na}\|^2_{L^2}+2\nu  k^2\|\om_{Na}\|^2_{L^2}\\
\leq &\nu^{-\frac13}| k|^{\frac43}\|f_1\|^2_{L^2}+\nu^{-1}\|f_2\|^2_{L^2}+(\nu k^2)^{\frac13}\|\om_{Na}\|^2_{L^2}.
\end{align*}
Integrating with respect to time, we further use \eqref{esti: wNaL2L2 f1f2} to obtain
\begin{equation}\label{esti: omNaL2f1f2}
\begin{aligned}
\|\om_{Na}\|^2_{L^\infty L^2}\lesssim & \nu^{-\frac13}| k|^{\frac43}\|f_1\|^2_{L^2L^2}+\nu^{-1}\|f_2\|^2_{L^2L^2}+(\nu k^2)^{\frac13}\|\om_{Na}\|^2_{L^2L^2} \\
\lesssim &\nu^{-\frac13}| k|^{\frac43}\|f_1\|^2_{L^2L^2}+\nu^{-1}\|f_2\|^2_{L^2L^2}.
\end{aligned}
\end{equation}

Let $\la'=-\la/ k$. The equation \eqref{equ: w1Na} can be rewritten as
\begin{align*}
(-\nu (\pa^2_y- k^2)+i k (y-\la'))w^{(1)}_{Na}(\la, k,y)=-i k F_1(\la, k,y), \quad w^{(1)}_{Na}|_{y=\pm 1}=0.
\end{align*}
By Proposition \ref{Lemma: Nonslip wLinfty wL1} and Lemma \ref{lemma: c1,c2 FL2}, we conclude that
\begin{align*}
(1+|(-\la/ k-1)|)^{\frac34}|c^{(1)}_1(\la)|\|w_1\|_{L^2}\leq C\nu^{-\frac{5}{12}}| k|^{-\frac{7}{12}}\| k F_1(\la)\|_{L^2}.
\end{align*}
In light of $\|(1+|\la/ k+1|)^{-\frac34}\|_{L^2}\leq C| k|^{\frac12}$, we have
\begin{align*}
\|\om^{(1)}_1(t)\|_{L^2_y}\leq &\frac{1}{2\pi}\int_{\R}|c^{(1)}_{1}(\la)|\|w_1(\la)\|_{L^2_y}d\la\\
\leq &C\big\|(1+|\la/ k+1|)^{-\frac34}\nu^{-\frac{5}{12}}| k|^{-\frac{7}{12}}\| k F_1(\la)\|_{L^2}\big\|_{L^1_{\la}}\\
\leq & C\nu^{-\frac{5}{12}}| k|^{\frac{5}{12}}\|(1+|\la/ k+1|)^{-\frac34}\|_{L^2_{\la}}\|F_1(\la)\|_{L^2_{\la}L^2_y}\\
\leq& C\nu^{-\frac{5}{12}}| k|^{\frac{11}{12}}\|f_1\|_{L^2L^2}.
\end{align*}
This inequality yields
\begin{align}\label{om11L2f1L2}
\nu^{\frac14} | k|^{-\frac14}\|\om^{(1)}_1\|_{L^\infty L^2}\leq C\nu^{-\frac16}| k|^{\frac23}\|f_1\|_{L^2L^2}.
\end{align}
Similarly, we obtain
\begin{align}\label{omom12L2f1L2}
\nu^{\frac14} | k|^{-\frac14}\|\om^{(1)}_2\|_{L^\infty L^2}\leq C\nu^{-\frac16}| k|^{\frac23}\|f_1\|_{L^2L^2}.
\end{align}

By Proposition \ref{lemma: Nonslip weighted version} and Lemma \ref{lemma: c1,c2 FH-1}, we have
\begin{align*}
&(1+|(-\la/ k-1)|)^{\frac34}|c^{(2)}_1(\la)|\|\rho_k^{\frac12} w_{1}\|_{L^2_y}\leq C\nu^{-\frac23}| k|^{-\frac13}\|F_2(\la)\|_{L^2_{y}},\\
&(1+|(-\la/ k-1)|)^{\frac38}|c^{(2)}_1(\la)|\|\rho_k^{-\frac14} w_{1}\|_{L^2_y}\leq C\nu^{-\frac{19}{24}}| k|^{-\frac{5}{24}}\|F_2(\la)\|_{L^2_y},
\end{align*}
which give
\begin{align}\label{c21rhow1+c21rho-14w1L2}
|\la+ k|^{\frac32}|c^{(2)}_1(\la)|^2\|\rho_k^{\frac12}w_1\|^2_{L^2}+\nu^{\frac14}|k|^{\frac12}|\la+ k|^{\frac34}|c^{(2)}_1(\la)|^2\|\rho^{-\frac14}_k w_1\|^2_{L^2}\leq C\nu^{-\frac43}|k|^{\frac56}\|F_2\|^2_{L^2}.
\end{align}
Furthermore, we use H\"older's inequality to get
\begin{equation}\label{om21}
\begin{aligned}
|\om^{(2)}_1(t, k,y)|^2\leq& \Big|\int_{\R}|c^{(2)}_1(\la)w_1(\la, k,y)| d\la\Big|^2\\
\leq& \int_{\R}|\la+ k|^{\frac32}|c^{(2)}_1(\la)|^2\rho_k(y)|w_1(\la, k,y)|^2\\
&+\nu^{\frac14}| k|^{\frac12}|\la+ k|^{\frac34}|c^{(2)}_1(\la)|^2\rho^{-\frac12}_k (y)|w_1(\la, k,y)|^2 d\la\\
&\times \int_{\R} \big(|\la+ k|^{\frac32}\rho_k(y)+(\nu k^2)^{\frac14}|\la+ k|^{\frac34}\rho^{-\frac12}_k(y)\big)^{-1} d\la.
\end{aligned}
\end{equation}
It is known from \cite[P159]{Chen-Li-Wei-Zhang} that for $10\nu\leq |k|<1$, there still holds
\begin{align*}
\int_{\R}(|\la+ k|^{\frac32}\rho_k(y)+(\nu k^2)^{\frac14}|\la+ k|^{\frac34}\rho^{-\frac12}_k(y))^{-1} d\la
=C\nu^{-\frac16}| k|^{-\frac13}.
\end{align*}
In terms of \eqref{om21} and \eqref{c21rhow1+c21rho-14w1L2}, we have
\begin{align*}
\|\om^{(2)}_1(t)\|^2_{L^2_y}
\leq&C\nu^{-\frac16}| k|^{-\frac13}\int_{\R}\int^1_{-1}|\la+ k|^{\frac32}|c^{(2)}_1(\la)|^2\rho_k(y)|w_1(\la, k,y)|^2\\
&+\nu^{\frac14}| k|^{\frac12}|\la+ k|^{\frac34}|c^{(2)}_1(\la)|^2\rho^{-\frac12}_k (y)|w_1(\la, k,y)|^2dy d\la \\
=& C\nu^{-\frac16}| k|^{-\frac13}\int_{\R}|\la+ k|^{\frac32}|c^{(2)}_1(\la)|^2\|\rho^{\frac12}_k w_1\|^2_{L^2}
+\nu^{\frac14}| k|^{\frac12}|\la+ k|^{\frac34}|c^{(2)}_1(\la)|^2\|\rho^{-\frac14}_k w_1\|^2_{L^2}d\la\\
\leq& C\nu^{-\frac16}| k|^{-\frac13}\int_{\R}\nu^{-\frac43}|k|^{\frac56}\|F_2(\la)\|^2_{L^2} d\la\sim \nu^{-\frac32}| k|^{\frac12}\|f_2\|^2_{L^2 L^2}.
\end{align*}
This inequality  gives
\begin{align}\label{om21L2f2L2}
\nu^{\frac12} | k|^{-\frac12}\|\om^{(2)}_1\|^2_{L^\infty L^2}\leq C\nu^{\frac12} | k|^{-\frac12}\nu^{-\frac32}| k|^{\frac12}\|f_2\|^2_{L^2L^2} =C\nu^{-1}\|f_2\|^2_{L^2L^2}.
\end{align}
Similarly, we have
\begin{align}\label{om22L2f2L2}
\nu^{\frac12} | k|^{-\frac12}\|\om^{(2)}_2\|^2_{L^\infty L^2}\leq C\nu^{-1}\|f_2\|^2_{L^2L^2}.
\end{align}

Combining \eqref{esti: omNaL2f1f2}--\eqref{omom12L2f1L2}, \eqref{om21L2f2L2} and \eqref{om22L2f2L2}, we finally obtain
\begin{align*}
\nu^{\frac12} | k|^{-\frac12}\|\om\|^2_{L^\infty L^2}\leq& \|\om_{Na}\|^2_{L^\infty L^2}+\sum_{1\leq j,l\leq 2}\nu^{\frac12} | k|^{-\frac12}\|\om^{(j)}_{l}\|^2_{L^\infty L^2}\lesssim \nu^{-\frac13}| k|^{\frac43}\|f_1\|^2_{L^2 L^2}+\nu^{-1}\|f_2\|^2_{L^2 L^2}.
\end{align*}
This completes the proof of Proposition \ref{Prop: inhomo}.
\end{proof}

Next, we establish the space-time estimates for the homogeneous problem.
\begin{proposition}\label{Prop homo pro}
Let $10\nu\leq |k|< 1$ and $\om_{H}$ be a solution of \eqref{equ: omH} with $\om^{in}\in H^1(I)$ and $\lan \om^{in}, e^{\pm  k y}\ran=0$. Then there exists a constant $C>0$ independent of $\nu$, $ k$ such that
\begin{equation}\label{homo om}
\begin{aligned}
&\nu^{\frac13} | k|^{\frac23}\|\rho^{\frac12}_k \om_{H}\|^2_{L^2L^2}+|k|\|u_{H}\|^2_{L^2 L^2}+\nu^{\frac12} | k|^{\frac12}\|\om_{H}\|^2_{L^2L^2}+\nu^{\frac12} | k|^{-\frac12}\|\om_{H}\|^2_{L^\infty L^2}\\
\leq& C\|\om^{in}\|^2_{L^2}+C\nu^{\frac23}| k|^{-\frac23}\|\pa_y\om^{in}\|^2_{L^2}.
\end{aligned}
\end{equation}
\end{proposition}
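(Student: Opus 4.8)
The plan is to follow the resolvent scheme of Proposition \ref{Prop: inhomo}, with the forcing replaced by the initial datum, and to treat the unweighted norm $\nu^{\frac12}|k|^{-\frac12}\|\om_H\|_{L^\infty L^2}^2$ separately. Taking the Laplace transform $w_H(\la,k,y)=\int_0^\infty\om_H(t,k,y)e^{-it\la}\,dt$, equation \eqref{equ: omH} turns into the resolvent equation $-\nu(\pa_y^2-k^2)w_H+ik(y-\la')w_H=\om^{in}$ with $\la'=-\la/k$, $w_H=(\pa_y^2-k^2)\psi_H$ and $\psi_H(\pm1)=\psi_H'(\pm1)=0$; the compatibility $\langle\om^{in},e^{\pm ky}\rangle=0$ makes this exactly the non-slip problem \eqref{equ: psij} with $\epsilon=0$ and $F=\om^{in}$. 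As in the reformulation before Proposition \ref{lemma:non-slip boundary,resolvent} I would split $w_H=w_{Na,H}+c_1(\la)w_1+c_2(\la)w_2$, where $w_{Na,H}$ is the Navier-slip resolvent solution with datum $\om^{in}$, the $w_j$ solve \eqref{equ: psi1}--\eqref{equ: psi2} with $\la$ replaced by $\la'$, and $c_j(\la)$ is given by \eqref{def: c1c2} with $w_{Na}$ replaced by $w_{Na,H}$. Inverting the transform, $\om_H=\om_{Na,H}+\sum_{l=1,2}\om_l^{corr}$ with $\om_l^{corr}(t)=\frac{1}{2\pi}\int_\R c_l(\la)w_l(\la)e^{it\la}\,d\la$, and by Plancherel in $t$ every target norm becomes a $\la$-integral of a resolvent quantity.

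The Navier-slip contribution I would estimate exactly as in \eqref{esti: wNaL2L2 f1f2}: feeding \eqref{lemma: wL2FL2}--\eqref{lemma: wL2,FH-1} into Plancherel controls $\nu^{\frac13}|k|^{\frac23}\|\rho_k^{\frac12}\om_{Na,H}\|_{L^2L^2}^2$, $|k|\|u_{Na,H}\|_{L^2L^2}^2$ and $\nu^{\frac12}|k|^{\frac12}\|\om_{Na,H}\|_{L^2L^2}^2$, the convergence of the $\la$-integral being supplied by the enhanced-dissipation smoothing built into the $(\nu k^2)^{\frac13}$ and $|k|\|(y-\la')w\|_{L^2}$ terms (the latter gives $\|w_{Na,H}(\la)\|_{L^2}\lesssim|\la|^{-1}\|\om^{in}\|_{L^2}$ for $|\la'|\gtrsim1$). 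Because $\om_{Na,H}(\pm1)=0$, the energy identity $\frac{d}{dt}\|\om_{Na,H}\|_{L^2}^2+2\nu\|\pa_y\om_{Na,H}\|_{L^2}^2+2\nu k^2\|\om_{Na,H}\|_{L^2}^2=0$ gives $\|\om_{Na,H}\|_{L^\infty L^2}\le\|\om^{in}\|_{L^2}$, and since $10\nu\le|k|$ forces $\nu^{\frac12}|k|^{-\frac12}\le1$ this settles the $L^\infty L^2$ part of $\om_{Na,H}$.

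The corrector terms are the genuinely new ingredient. For the $L^\infty L^2$ bound I would use the $L^1_\la$ estimate $\|\om_l^{corr}(t)\|_{L^2}\le\frac{1}{2\pi}\int_\R|c_l(\la)|\,\|w_l(\la)\|_{L^2}\,d\la$, and for the $L^2L^2$ bounds the $L^2_\la$ estimate $\|\om_l^{corr}\|_{L^2L^2}^2=\frac{1}{2\pi}\int_\R|c_l(\la)|^2\|w_l(\la)\|_{L^2}^2\,d\la$ together with the weighted Cauchy--Schwarz device of \eqref{om21}; the pointwise bounds on $w_l$ come from Propositions \ref{Lemma: Nonslip wLinfty wL1}--\ref{lemma: Nonslip weighted version} and the decay of $c_l$ from Lemmas \ref{lemma: c1,c2 FL2}--\ref{lemma: c1,c2 FH-1} applied with $F=\om^{in}$. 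The structural fact I would lean on is that, after two integrations by parts using $(\pa_y^2-k^2)\frac{\sinh k(1\pm y)}{\sinh2k}=0$ and $\psi_{Na,H}(\pm1)=0$, the coefficient $c_l(\la)$ equals, up to sign, the boundary trace $\psi_{Na,H}'(\la,\pm1)$ of the Navier-slip stream function, so that $\int_\R|c_l(\la)|^2\,d\la=2\pi\int_0^\infty|\psi_{Na,H}'(t,\pm1)|^2\,dt$ is a space-time boundary-trace norm.

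The hard part will be closing the corrector estimates at the sharp power. Inserting the $L^2$-coefficient bound of Lemma \ref{lemma: c1,c2 FL2} together with $\|w_1\|_{L^2}\lesssim\nu^{-\frac14}|k|^{\frac14}(1+|1-\la'|^{\frac12})^{\frac12}$ from \eqref{esti: w1L2} produces a convergent $\la$-integral that nonetheless overshoots \eqref{homo om} by the factor $L=(|k|/\nu)^{\frac13}$, the loss being concentrated in the resonant window $|\la'\mp1|\lesssim1$ where the critical layer sits at the wall, whereas the sharper $H^{-1}$-bound of Lemma \ref{lemma: c1,c2 FH-1} ceases to be $\la$-integrable at infinity. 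I expect to resolve this by cutting the $\la'$-axis at the resonant window, using the $L^2$-bound on the tail and, on the window, the boundary-trace smoothing estimate for $\psi_{Na,H}'(t,\pm1)$; this is precisely where the $H^1$-regularity of the datum is consumed, the trace of one extra $y$-derivative being paid for by $\|\pa_y\om^{in}\|_{L^2}$ at the boundary-layer scale $\delta=(\nu/|k|)^{\frac13}$, which generates the term $\nu^{\frac23}|k|^{-\frac23}\|\pa_y\om^{in}\|_{L^2}^2=\delta^2\|\pa_y\om^{in}\|_{L^2}^2$ in \eqref{homo om}. Assembling the Navier-slip and corrector bounds then yields \eqref{homo om}.
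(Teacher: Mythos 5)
Your scheme---Laplace transforming the homogeneous problem directly so that the initial datum becomes a $\la$-independent forcing $F=\om^{in}$, and then running the resolvent machinery of Proposition \ref{lemma:non-slip boundary,resolvent}---is not the paper's route, and it has a quantitative gap that your proposal only half-acknowledges. The loss is not confined to the corrector coefficients: it already occurs in the interior Navier-slip piece. On the resonant window $|\la|\lesssim |k|$ (i.e.\ $|\la'|\lesssim 1$) the only available bound from \eqref{lemma: wL2FL2} is $\|w_{Na,H}(\la)\|_{L^2}\lesssim \nu^{-\frac13}|k|^{-\frac23}\|\om^{in}\|_{L^2}$, uniformly in $\la$; since the window has measure $\sim|k|$, Plancherel gives at best
\begin{align*}
\nu^{\frac12}|k|^{\frac12}\int_{|\la|\lesssim|k|}\|w_{Na,H}(\la)\|^2_{L^2}\,d\la
\lesssim \nu^{\frac12}|k|^{\frac12}\cdot|k|\cdot\nu^{-\frac23}|k|^{-\frac43}\|\om^{in}\|^2_{L^2}
=\nu^{-\frac16}|k|^{\frac16}\|\om^{in}\|^2_{L^2}=L^{\frac12}\|\om^{in}\|^2_{L^2},
\end{align*}
an overshoot of $L^{\frac12}$ that the $(y-\la)$-weight decay you invoke cannot remove (it only controls the tail $|\la'|\gtrsim1$). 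This loss is intrinsic: the resolvent estimate is sharp for forcings concentrated at the critical layer, whereas the true Laplace transform of the free evolution spreads $\om^{in}$ over the layer and is smaller by exactly $L^{\frac12}$ there, so no reshuffling of Propositions \ref{pro: reso esti FL2} and \ref{lemma:non-slip boundary,resolvent} with $F=\om^{in}$ can reach \eqref{homo om}. The one loss you do flag (factor $L$ in the correctors) is to be repaired by a ``boundary-trace smoothing estimate'' for $\psi'_{Na,H}(t,\pm1)$ on the resonant window; your identity $c_l(\la)=\pm\phi'_{Na}(\la,\mp1)$ is correct (this is how \eqref{def: c1c2} is derived), but such a trace-smoothing estimate is established nowhere in the paper, and asserting it with the precise cost $\delta\|\pa_y\om^{in}\|_{L^2}$ is essentially a restatement of the hard part of the proposition rather than a proof of it.

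The paper avoids both problems by \emph{first subtracting the explicit damped free-streaming solution} $\om^{(1)}_{H}=e^{-(\nu k^2)^{1/3}t}e^{-itky}\om^{in}$, whose space-time norms are computed exactly in \eqref{esti: om1HL2}--\eqref{u1HL2L2} (in particular mixing of the phases gives $\|u^{(1)}_H\|^2_{L^2L^2}\lesssim|k|^{-1}\|\om^{in}\|^2_{L^2}$, with no resolvent estimate involved). The remainder then has \emph{zero initial data} and small forcing $(\nu k^2-(\nu k^2)^{\frac13})\om^{(1)}_H-\nu\pa_y^2\om^{(1)}_H$, so it splits into $\om^{(2)}_H$, handled sharply by the inhomogeneous Proposition \ref{Prop: inhomo} --- this is the precise point where the $H^1$ norm is consumed, via $\nu\|\pa_y\om^{(1)}_H\|^2_{L^2L^2}\leq\nu^{\frac23}|k|^{-\frac23}\|\pa_y\om^{in}\|^2_{L^2}+2\|\om^{in}\|^2_{L^2}$ in \eqref{payom1HL2L2}, not via any boundary trace --- and a pure boundary corrector $\om^{(3)}_H$ whose coefficients $\widetilde{c}_l(\la)$ are Laplace transforms of the \emph{explicit} functions $a_l(t)$ built from $\om^{(1)}_H$, controlled with the weight $(1+|\la/k\pm1|)$ by the elementary Plancherel computation of Lemma \ref{claim}. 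In short, the missing idea in your proposal is the subtraction of $\om^{(1)}_H$: it is what converts the homogeneous datum into $\la$-integrable forcing and turns the corrector coefficients into explicit, estimable quantities, and without it the resolvent scheme cannot close at the stated powers.
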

\begin{proof}
It is easy to see that
\begin{align*}
&\om^{(1)}_{H}(t, k,y)=e^{-(\nu k^2)^{1/3}t}e^{-it k y}\om^{in}(k,y), \quad t>0
\end{align*}
satisfies
\begin{align}\label{equ: omH1}
&(\pa_t-\nu(\pa^2_y- k^2)+i k y)\om^{(1)}_{H}=(\nu k^2-(\nu  k^2)^{\frac13})\om^{(1)}_{H}-\nu \pa^2_y\om^{(1)}_{H},\quad \om^{(1)}_H(0)=\om^{in}(k,y).
\end{align}
Let $\om=\om_{H}-\om^{(1)}_{H}$. The following system arises from \eqref{equ: omH} and \eqref{equ: omH1}
\begin{equation*}
\left\{
\begin{aligned}
&\pa_t\om-\nu(\pa^2_y-k^2)\om+iky\om=-(\nu k^2-(\nu  k^2)^{\frac13})\om^{(1)}_{H}+\nu \pa^2_y\om^{(1)}_{H},\\
&\om\big|_{t=0}=0,\quad \lan \om, e^{\pm ky}\ran=-\lan \om^{(1)}_{H},e^{\pm ky}\ran.
\end{aligned}
\right.
\end{equation*}
We decompose $\om$ into
\begin{align*}
\om=\om^{(2)}_{H}+\om^{(3)}_{H},
\end{align*}
where $\om^{(2)}_H$ and $\om^{(3)}_H$ solve
\begin{equation}\left\{\begin{aligned}
&(\pa_t-\nu(\pa^2_y- k^2)+i k y)\om^{(2)}_{H}=-(\nu k^2-(\nu  k^2)^{\frac13})\om^{(1)}_{H}+\nu \pa^2_y\om^{(1)}_{H},\\
&\om^{(2)}_{H}|_{t=0}=0,\qquad \lan \om^{(2)}_H, e^{\pm  k y}\ran =0,\end{aligned}\right.\label{equ: omH2}\end{equation}
and
\begin{equation}\left\{
\begin{aligned}
&(\pa_t-\nu(\pa^2_y- k^2)+i k y)\om^{(3)}_H=0, \\
&\om^{(3)}_H|_{t=0}=0, \qquad \lan\om^{(3)}_{H}, e^{\pm  k y}\ran =-\lan \om^{(1)}_{H},e^{\pm ky}\ran,\label{equ: omH3}
\end{aligned}
\right.
\end{equation}
respectively. Denote
\begin{align*}
u^{(j)}_{H}=(\pa_y, -i k)\psi^{j}_H, \quad \psi^{(j)}_{H}=(\pa^2_y- k^2)^{-1}\om^{(j)}_H, \quad j=0,1,2,3.
\end{align*}
We divide the proof into two steps as follows.

\textbf{Step 1.} Estimates of $\om^{(j)}_{H}$, $j=1,2$.
By Proposition \ref{Prop: inhomo}, we have for $10\nu\leq|k|<1$,
\begin{equation}\label{esti: om2HL2L2}
\begin{aligned}
&\nu^{\frac13} | k|^{\frac23}\|\rho^{\frac12}_k \om^{(2)}_{H}\|^2_{L^2 L^2}+| k|\|u^{(2)}_{H}\|^2_{L^2 L^2}+\nu^{\frac12} | k|^{\frac12} \|\om^{(2)}_{H}\|^2_{L^2L^2}+\nu^{\frac12} | k|^{-\frac12}\|\om^{(2)}_{H}\|^2_{L^\infty L^2}\\
\leq & C\big(\nu^{-\frac13}| k|^{-\frac23}\|(\nu  k^2-(\nu  k^2)^{\frac13})\om^{(1)}_H\|^2_{L^2 L^2}+\nu \|\pa_y \om^{(1)}_H\|^2_{L^2L^2}\big)\\
\leq & C((\nu  k^2)^{\frac13} \|\om^{(1)}_H\|^2_{L^2 L^2}+\nu \|\pa_y \om^{(1)}_H\|^2_{L^2L^2}).
\end{aligned}
\end{equation}
Hence, it suffices to estimate $\om^{(1)}_{H}$. It follows from \cite[P161-162]{Chen-Li-Wei-Zhang} that
\begin{align}
&\|\om^{(1)}_H(t)\|_{L^2}=e^{-(\nu  k^2)^{1/3}t}\|\om^{in}\|_{L^2},\label{esti: om1HL2}\\
&(\nu  k^2)^{\frac13}\|\om^{(1)}_{H}\|^2_{L^2 L^2}=\|\om^{in}\|^2_{L^2}/2,\label{esti: on1HL2L2}\\
&\nu \|\pa_y \om^{(1)}_{H}\|^2_{L^2L^2}\leq \nu^{\frac23}| k|^{-\frac23}\|\pa_y \om^{in}\|^2_{L^2}+2\|\om^{in}\|^2_{L^2}\label{payom1HL2L2}
\end{align}
and
\begin{equation}
\begin{aligned}\label{u1HL2L2}
\int_{\R} \|u^{(1)}_{H}(t)\|^2_{L^2} dt\leq \sum_{j=1}^{+\infty}\frac{2\pi}{| k|}\frac{\|\om^{in}\|^2_{L^2}}{(\pi j/2)^2+| k|^2}\leq\frac{2\pi}{| k|}\|\om^{in}\|^2_{L^2}.
\end{aligned}
\end{equation}

Inserting \eqref{esti: on1HL2L2} and \eqref{payom1HL2L2} into \eqref{esti: om2HL2L2}, we obtain
\begin{equation}\label{esti:omH2}
\begin{aligned}
&\nu^{\frac13} | k|^{\frac23}\|\rho^{\frac12}_k \om^{(2)}_{H}\|^2_{L^2 L^2}+| k|\|u^{(2)}_{H}\|^2_{L^2 L^2}+\nu^{\frac12} | k|^{\frac12} \|\om^{(2)}_{H}\|^2_{L^2L^2}+\nu^{\frac12} | k|^{-\frac12}\|\om^{(2)}_{H}\|^2_{L^\infty L^2}\\
\lesssim & \|\om^{in}\|^2_{L^2}+\nu^{\frac23}| k|^{-\frac23}\|\pa_y \om^{in}\|^2_{L^2}.
\end{aligned}
\end{equation}
By \eqref{esti: om1HL2}, \eqref{esti: on1HL2L2} and \eqref{u1HL2L2}, we have
\begin{equation}\label{esti:omH1}
\begin{aligned}
&(\nu  k^2)^{\frac13}\|\rho^{\frac12}_k \om^{(1)}_H\|^2_{L^2L^2}+| k|\|u^{(1)}_H\|^2_{L^2L^2}+\nu^{\frac12}|k|^{\frac12}\|\om^{(1)}_H\|^2_{L^2 L^2}+\nu^{\frac12}|k|^{-\frac12} \|\om^{(1)}_H\|^2_{L^\infty L^2}\\
\lesssim & (\nu  k^2)^{\frac13}\|\om^{(1)}_H\|^2_{L^2 L^2}+\|\om^{(1)}_{H}\|^2_{L^\infty L^2}\leq C\|\om^{in}\|^2_{L^2}.
\end{aligned}
\end{equation}

\textbf{Step 2.} Estimate of $\om^{(3)}_{H}$.
Let
\begin{align}\label{expre: wcj}
w^{(3)}_{H}(\la, k,y):=\int^{+\infty}_0\om^{(3)}_H(t, k,y) e^{-it\la}dt. 
\end{align}
It is easy to verify that
\begin{align*}
&(i\la-\nu (\pa^2_y- k^2)+i k y) w^{(3)}_{H}(\la, k,y)=0.
\end{align*}
Define
\begin{align}\label{expre: c1la}
\widetilde{c}_l(\la)=-\int^1_{-1} \frac{\sinh  k(1+(-1)^{l+1}y)}{\sinh 2 k} w^{(3)}_{H}(\la, k,y) dy,\quad l=1,2.
\end{align}
Then, we have
\begin{align*}
w^{(3)}_{H}=-\widetilde{c}_1(\la)w^{(3)}_{H,1}-\widetilde{c}_2(\la)w^{(3)}_{H,2},
\end{align*}
where $w^{(3)}_{H,1}=(\pa^2_y-k^2)\phi_1$, $w^{(3)}_{H,2}=(\pa^2_y-k^2)\phi_2$, and $\phi_1$, $\phi_2$ are the solutions of homogeneous OS equations \eqref{equ: psi1} and \eqref{equ: psi2} with $\epsilon=0$.

By Proposition \ref{Lemma: Nonslip wLinfty wL1},
we infer
\begin{equation}\label{wH3L2}
\begin{aligned}
\|w^{(3)}_{H}(\la,k,y)\|_{L^2_y}\leq &|\widetilde{c}_1(\la)|\|w^{(3)}_{H,1}\|_{L^2_y}+|\widetilde{c}_2(\la)|\|w^{(3)}_{H,2}\|_{L^2_y}\\
\leq & C\nu^{-\frac14}| k|^{\frac14}\big(|\widetilde{c}_1(\la)|(1+|\la/ k+1|)^{\frac14}+|\widetilde{c}_2(\la)|(1+|-\la/ k+1|)^{\frac14}\big).
\end{aligned}
\end{equation}
It follows from \eqref{esti: psiL2omL2} and \eqref{esti: w1+w2L1} that
\begin{equation}\label{uH3L2}
\begin{aligned}
\|( k,\pa_y)(\pa_y^2- k^2)^{-1} w^{(3)}_{H}(\la,k,y)\|_{L^2_y}
\leq &|\widetilde{c}_1(\la)|\|w^{(3)}_{H,1}\|_{L^1_y}+|\widetilde{c}_2(\la)|\|w^{(3)}_{H,2}\|_{L^1_y}\\
\lesssim &(|\widetilde{c}_1(\la)|+|\widetilde{c}_2(\la)|).
\end{aligned}
\end{equation}
By \eqref{rho12wL2} and $L=(k/\nu)^{\frac13}$, we have
\begin{equation}\label{weight rhowL2}
\begin{aligned}
\|\rho^{\frac12}_k w^{(3)}_{H}(\la,k,y)\|_{L^2_y}\leq& |\widetilde{c}_1(\la)|\|\rho^{\frac12}_k w^{(3)}_{H,1}\|_{L^2}+|\widetilde{c}_2(\la)|\|\rho^{\frac12}_k w^{(3)}_{H,2}\|_{L^2}\\
\leq & C\nu^{-\frac16}|k|^{\frac16}(|\widetilde{c}_1(\la)|+|\widetilde{c}_2(\la)|).
\end{aligned}
\end{equation}
Using Plancherel's theorem and summing up \eqref{wH3L2}--\eqref{weight rhowL2}, we arrive at
\begin{align*}
&(\nu  k^2)^{\frac13}\|\rho^{\frac12}_k \om^{(3)}_{H}\|^2_{L^2L^2}+|k|\|u^{(3)}_{H}\|^2_{L^2L^2}+\nu^{\frac12}  |k|^{\frac12}\|\om^{(3)}_{H}\|^2_{L^2L^2}\\
\sim& (\nu  k^2)^{\frac13}\big\|\|\rho^{\frac12}_k w^{(3)}_{H}\|^2_{L^2_y}\big\|^2_{L^2_{\la}}+|k|\big\|\|( k,\pa_y)(\pa_y^2- k^2)^{-1}w^{(3)}_{H}\|^2_{L^2_y} \big\|^2_{L^2_{\la}} +\nu^{\frac12}|k|^{\frac12}\big\|\|w^{(3)}_{H}\|^2_{L^2_y}\big\|^2_{L^2_{\la}}\\
\leq& C|k|\big(\|(1+|\la/ k+1|)^{\frac14} \widetilde{c}_1\|^2_{L^2_{\la}}+\|(1+|\la/ k-1|)^{\frac14}\widetilde{c}_2\|^2_{L^2_{\la}}\big).
\end{align*}
Thanks to $\om^{(3)}_H(t)=\frac{1}{2\pi}\int_{\R} w^{(3)}_{H}(\la)e^{it\la}d\la$ and \eqref{wH3L2}, we get
\begin{align*}
\nu^{\frac12}| k|^{-\frac12}\|\om^{(3)}_{H}\|^2_{L^\infty L^2}\leq &\nu^{\frac12}| k|^{-\frac12}\big\|\|w^{(3)}_{H}(\la)\|_{L^2_y}\big\|^2_{L^1_{\la}}\\
\lesssim &\nu^{\frac12} k^{-\frac12}\nu^{-\frac12}| k|^{\frac12}\big(\|(1+|\la/ k+1|)^{\frac14} \widetilde{c}_1\|^2_{L^1_{\la}}+\|(1+|\la/ k-1|)^{\frac14}\widetilde{c}_2\|^2_{L^1_{\la}}\big)\\
=&\big(\|(1+|\la/ k+1|)^{\frac14} \widetilde{c}_1\|^2_{L^1_{\la}}+\|(1+|\la/ k-1|)^{\frac14}\widetilde{c}_2\|^2_{L^1_{\la}}\big)\\
\lesssim & |k|\big(\|(1+|\la/ k+1|) \widetilde{c}_1\|^2_{L^2_{\la}}+\|(1+|\la/ k-1|)\widetilde{c}_2\|^2_{L^2_{\la}}\big).
\end{align*}
By \eqref{expre: c1la} and the boundary condition
\begin{align*}
\lan\om^{(3)}_{H}, e^{\pm  k y}\ran =-\lan \om^{(1)}_{H},e^{\pm ky}\ran,
\end{align*}
we have
\begin{align}
\widetilde{c}_l(\la)=\int^1_{-1} \frac{\sinh  k(1+(-1)^{l+1}y)}{\sinh 2 k} w^{(1)}_{H}(\la, k,y) dy,\quad l=1,2.
\end{align}
Choosing $g=\om^{in}$ in Lemma \ref{claim}, we arrive at
\begin{align*}
&(\nu  k^2)^{\frac13}\|\rho^{\frac12}_k \om^{(3)}_H\|^2_{L^2L^2}+ |k|\|u^{(3)}_{H}\|^2_{L^2L^2}+\nu^{\frac12}|k|^{\frac12} \|\om^{(3)}_{H}\|^2_{L^2L^2} +\nu^{\frac12}|k|^{-\frac12}\|\om^{(3)}_{H}\|^2_{L^\infty L^2}\\
\leq& C|k|(\|(1+|\la/ k+1|)\widetilde{c}_1\|^2_{L^2_{\la}}+\|(1+|\la/ k-1|)\widetilde{c}_2\|^2_{L^2_{\la}})\leq C\|\om^{in}\|^2_{L^2}.
\end{align*}
This together with \eqref{esti:omH2} and \eqref{esti:omH1} completes the proof of \eqref{homo om}.
\end{proof}
\section{Bilinear Estimates and Nonlinear Transition Threshold}\label{nonlinear tran thre}
In this section, we shall apply the space-time estimates  and  a delicate analysis on the interactions between the low, intermediate and high frequencies of $u$ and $\om$ to prove Theorem \ref{Th: tran thre}, which is the transition threshold of the Couette flow on $\R\times [-1,1]$ with non-slip boundary condition.

Recall
\begin{equation*}
E_k=\left\{
\begin{aligned}
&\|\om_k\|_{L^\infty_t L^2_y}+\nu^{\frac12}\|\om_k\|_{L^2_t L^2_y}+\|u_k\|_{L^\infty_t L^\infty_y}+\nu^{\frac12}\|u_k\|_{L^2_tL^2_y}, \quad 0<|k|< 10\nu,\\
&\|(1-|y|)^{\frac12}\om\|_{L^\infty_t L^2_y}+\nu^{\frac14}|k|^{-\frac14}\|\om_k\|_{L^\infty_t L^2_y}+\nu^{\frac14}|k|^{\frac14}\|\om_k\|_{L^2_t L^2_y}+\|u_k\|_{L^\infty_t L^\infty_y}\\
&+|k|^{\frac12}\|u_k\|_{L^2_tL^2_y}, \qquad\qquad\qquad\qquad\qquad\qquad\qquad\qquad\qquad \qquad   10\nu\leq|k|<1,\\
&\|(1-|y|)^{\frac12}\om_k\|_{L^\infty_t L^2_y}+\nu^{\frac14}|k|^{\frac12}\|\om_k\|_{L^2_t L^2_y}+|k|^{\frac12}\|u_k\|_{L^\infty_t L^\infty_y}+|k|\|u_k\|_{L^2_t L^2_y},\,\, |k|\geq 1.
\end{aligned}
\right.
\end{equation*}
\begin{proof}[Proof of Theorem \ref{Th: tran thre}]
It follows from \eqref{Th nuxi2leq1 k>1} and Theorem \ref{Th: linear problem} that
\begin{equation}\label{esti: Ek}
\begin{aligned}
\|E_{k}\|_{L^1_k}\leq \bbn{\min\lr{\nu^{-\frac16}|k|^{\frac23},\nu^{-\frac{1}{2}}}f_1(k)}_{L_{k}^{1}L^2_tL^2_y}+\nu^{-\frac12} \|f_2(k)\|_{L^1_kL^2_tL^2_y} +\|u^{in}_k\|_{L^1_kH^2_y}.
\end{aligned}
\end{equation}
We claim
\begin{align}
&\bbn{\min\lr{\nu^{-\frac16}|k|^{\frac23},\nu^{-\frac{1}{2}}}f_1(k)}_{L_{k}^{1}L^2_tL^2_y}+\nu^{-\frac12}\|f_2(k)\|_{L^1_kL^2_tL^2_y} \leq C\nu^{-\frac12}\|E_k\|^2_{L^1_k}.\label{f1kLk1L2tL2y}
\end{align}
Inserting \eqref{f1kLk1L2tL2y} into \eqref{esti: Ek}, we arrive at
\begin{align*}
\|E_k\|_{L^1_k}\leq &\nu^{-\frac14}\||k|^{\frac12}f_1\|_{L^1_kL^2_tL^2_y}+\nu^{-\frac12}\|f_2\|_{L^1_kL^2_tL^2_y}+\|u^{in}_k\|_{L^1_k H^2_y}\\
\leq& C\nu^{-\frac12}\|E_k\|^2_{L^1_k}+\|u^{in}_k\|_{L^1_k H^2_y}.
\end{align*}
Due to the condition $\|u^{in}\|_{H^2}\leq c\nu^{\frac12}$, we deduce
\begin{align*}
\|u^{in}_k\|_{L^1_k H^2_y}\leq \int_{\R}(1+|k|^2)\|u^{in}_k\|_{H^2_y}\frac{1}{(1+|k|^2)} dk\leq \|u^{in}\|_{H^2(\R\times I)}\leq c\nu^{\frac12}.
\end{align*}
Choosing $c$ suitably small, we obtain by the continuation argument
$$\|E_k\|_{L^1_k}\leq C\nu^{\frac12},$$
which completes the proof of Theorem \ref{Th: tran thre}.

Next, we prove the claim \eqref{f1kLk1L2tL2y}. From the definition of $E_k$, the estimates $\|l u^1_l\|_{L^2_tL^2_y}\leq E_l$ and $\|(1-|y|)^{\frac12}\om_{k-l}\|_{L^\infty_t L^2_y}\leq E_{k-l}$ hold true for $l\in\R$ and $k\in\R$. Then, in terms of Hardy's inequality and the condition $\mathrm{div} u=0$, we deduce that
\begin{equation}\label{f2kL1kL2tL2y}
\begin{aligned}
\nu^{-\frac12}\|f_2\|_{L^1_kL^2_tL^2_y}\leq &\nu^{-\frac12}\iint_{l,k}\Big\|\frac{u^2_l}{(1-y)^{\frac12}}\Big\|_{L^2_t L^\infty_y}\|(1-|y|)^{\frac12}\om_{k-l}\|_{L^\infty_t L^2_y}dldk\\
\leq&\nu^{-\frac12} \iint_{l,k}\|\pa_y u^2_l\|_{L^2_tL^2_y}\|(1-|y|)^{\frac12}\om_{k-l}\|_{L^\infty_t L^2_y}dldk\\
=&\nu^{-\frac12}\iint_{l,k}\|l u^1_l\|_{L^2_tL^2_y}\|(1-|y|)^{\frac12}\om_{k-l}\|_{L^\infty_t L^2_y}dldk\\
\leq&\nu^{-\frac12}\|E_k\|_{L^1_k}\|E_l\|_{L^1_l}.
\end{aligned}
\end{equation}

For the estimate of $f^1_k$, we first analyze the two components $u_l$ and $w_{k-l}$ in $f^1_k$.
Decompose $\R$ into $I_1+I_2+I_3$, where
\begin{align*}
I_{1}=&\lr{x:0\leq |x|\leq 10 \nu},\\
I_{2}=&\lr{x:10\nu<|x|\leq 1},\\
I_{3}=&\lr{x:|x|> 1}.
\end{align*}
From the definition of $E_k$, it is clear that
\begin{align}
&\n{u_{l}^{1}}_{L_{t}^{\infty}L_{y}^{\infty}}\lesssim
\min\lr{1,|l|^{-\frac{1}{2}}}E_{l},
\quad&
\n{\om_{k}}_{L_{t}^{2}L_{y}^{2}}\lesssim
\begin{cases}
\nu^{-\frac{1}{2}}E_{k},\ &k\in I_{1},\\
\nu^{-\frac{1}{4}}|k|^{-\frac{1}{4}}E_{k},\ &k\in I_{2},\\
\nu^{-\frac{1}{4}}|k|^{-\frac{1}{2}}E_{k},\ &k\in I_{3}.\\
\end{cases}\label{equ:u00,w22}
\end{align}
By \eqref{esti: psiLinfty inte}, we have $\|u_{l}^1\|_{L^2_t L^\infty_y}\leq \|u_{l}\|^{\frac12}_{L^2_tL^2_y}\|\om_l\|^{\frac12}_{L^2_t L^2_y}$, which implies
\begin{align}
&\n{u_{l}^{1}}_{L_{t}^{2}L_{y}^{\infty}}\lesssim
\begin{cases}
\nu^{-\frac{1}{2}}E_{l},\ &l\in I_{1},\\
\nu^{-\frac{1}{8}}|l|^{-\frac{3}{8}}E_{l},\ & l\in I_{2},\\
\nu^{-\frac{1}{8}}|l|^{-\frac{3}{4}}E_{l},\ & l\in I_{3},
\end{cases}
\quad&  \n{\om_{k}}_{L_{t}^{\infty}L_{y}^{2}}\lesssim
\begin{cases}
E_{k} &k\in I_{1},\\
\nu^{-\frac{1}{4}}|k|^{\frac{1}{4}}E_{k},\ &k\in I_{2}.
\end{cases}\label{equ:u20,w02}
\end{align}

Since $u$ and $\om$ exhibit distinct space-time estimates across different frequencies, we divide the estimate of $f_{k}^1$ into the following nine parts:
\begin{equation}\label{f1 esti}
\begin{aligned}
\bbn{\min\lr{\nu^{-\frac16}|k|^{\frac23},\nu^{-\frac{1}{2}}}f_{k}^{1}}_{L_{k}^{1}L_{t}^{2}L_{y}^{2}}
=& \iint_{\R^{2}}  \min\lr{\nu^{-\frac16}|k|^{\frac23},\nu^{-\frac{1}{2}}}\n{u_{l}^{1}\om_{k-l}}_{L_{t}^{2}L_{y}^{2}}dl dk\\
=&\iint_{\R^{2}}\min\lr{\nu^{-\frac16}|k+l|^{\frac23},\nu^{-\frac{1}{2}}}\n{u_{l}^{1}\om_{k}}_{L_{t}^{2}L_{y}^{2}}dl dk\\
=&\sum_{i,j=1}^{3}\iint_{I_{i,j}}\min\lr{\nu^{-\frac16}|k+l|^{\frac23},\nu^{-\frac{1}{2}}}\n{u_{l}^{1}\om_{k}}_{L_{t}^{2}L_{y}^{2}}dl dk,
\end{aligned}
\end{equation}
where $I_{i,j}=\lr{(l,k):l\in I_{i},k\in I_{j}}$. For clarity, we divide the proof into three cases, as shown in the following table:

\begin{center}
\begin{tabular}{|c|c|c|c|}
	\hline Case \textrm{I}: & $ I_{1,1}= I_1\times I_1$ &  $ I_{2,1}= I_2\times I_1$  &  $ I_{3,1}= I_3\times I_1$  \\
   \hline Case \textrm{II}: & $ I_{1,2}= I_1\times I_2$ & $ I_{2,2}= I_2\times I_2$ &$ I_{3,2}= I_3\times I_2$\\
    \hline Case \textrm{III}: & $ I_{1,3}= I_1\times I_3$ & $ I_{2,3}= I_2\times I_3$ &$ I_{3,3}= I_3\times I_3$\\
   \hline
\end{tabular}
\end{center}

\textbf{Case \textrm{I}: $I_{1,1}\cup I_{2,1}\cup I_{3,1}$.} Thanks to H\"older's inequality, we have
\begin{align*}
&\sum_{i=1}^{3}\iint_{I_{i,1}}\min\lr{\nu^{-\frac16}|k+l|^{\frac23},\nu^{-\frac{1}{2}}}\n{u_{l}^{1}\om_{k}}_{L_{t}^{2}L_{y}^{2}}dl dk\\
\leq&\nu^{-\frac{1}{6}}\sum_{i=1}^{3}\iint_{I_{i,1}}|k+l|^{\frac{2}{3}}\n{u_{l}^{1}}_{L_{t}^{2}L_{y}^{\infty}}\n{\om_{k}}_{L_{t}^{\infty}L_{y}^{2}}dl dk
=:A_{1,1}+A_{2,1}+A_{3,1}.
\end{align*}

We use \eqref{equ:u20,w02} to obtain
\begin{align*}
A_{1,1}+A_{2,1}+A_{3,1}\lesssim &\nu^{-\frac{1}{6}}\iint_{I_{1,1}}\frac{|k+l|^{\frac{2}{3}}}{\nu^{\frac12}} E_{l}E_{k} dl dk+\nu^{-\frac{1}{6}}\iint_{I_{2,1}}\frac{ |k+l|^{\frac23}}{\nu^{\frac{1}{8}}|l|^{\frac{3}{8}}}E_{l}E_{k}dldk\\
&+ \nu^{-\frac{1}{6}}\iint_{I_{3,1}}\frac{|k+l|^{\frac23}}{\nu^{\frac18}|l|^{\frac34}}E_{l}E_{k}dldk\\
\lesssim &\nu^{-\frac{7}{24}}\n{E_{l}}_{L_{l}^{1}}\n{E_{k}}_{L_{k}^{1}}\lesssim \nu^{-\frac{1}{2}}\n{E_{l}}_{L_{l}^{1}}\n{E_{k}}_{L_{k}^{1}},
\end{align*}
where we have used that
\begin{align*}
&|k+l|^{\frac{2}{3}}\leq \nu^{\frac23}, \quad (l,k)\in I_{1,1},\\
&\frac{|k+l|^{\frac{2}{3}}}{|l|^{\frac{3}{8}}}\lesssim \frac{\nu^{\frac{2}{3}}+|l|^{\frac{2}{3}}}{|l|^{\frac{3}{8}}}
\lesssim \frac{\nu^{\frac{2}{3}}}{\nu^{\frac{3}{8}}}+|l|^{\frac{7}{24}}\lesssim 1,\quad (l,k)\in I_{2,1},
\end{align*}
and
\begin{align*}
&\frac{|k+l|^{\frac23}}{|l|^{\frac{3}{4}}}\lesssim \frac{|k|^{\frac{2}{3}}+|l|^{\frac23}}{|l|^{\frac{3}{4}}}\lesssim \nu^{\frac23}+ \frac{|l|^{\frac23}}{|l|^{\frac{3}{4}}}\lesssim 1,\quad (l,k)\in I_{3,1}.
\end{align*}

\textbf{Case \textrm{II}: $I_{1,2}\cup I_{2,2}\cup I_{3,2}$.} It is easy to see that 
\begin{align*}
&\sum_{i=1}^{3}\iint_{I_{i,2}}\min\lr{\nu^{-\frac16}|k+l|^{\frac23},\nu^{-\frac{1}{2}}}\n{u_{l}^{1}\om_{k}}_{L_{t}^{2}L_{y}^{2}}dl dk\\
\leq&\sum_{i=1}^{3}\nu^{-\frac{1}{6}}\iint_{I_{i,2}}|k+l|^{\frac{2}{3}}\n{u_{l}^{1}\om_{k}}_{L_{t}^{2}L_{y}^{2}}dl dk=:A_{1,2}+A_{2,2}+A_{3,2}.
\end{align*}

For $A_{1,2}$, we use \eqref{equ:u00,w22} for $(l,k)\in I_{1,2}$ to obtain
\begin{align*}
A_{1,2}\leq &\nu^{-\frac{1}{6}}\iint_{I_{1,2}}|k+l|^{\frac{2}{3}}\n{u_{l}^{1}}_{L_{t}^{\infty}L_{y}^{\infty}}
\n{\om_{k}}_{L_{t}^{2}L_{y}^{2}}dl dk\\
\lesssim &\nu^{-\frac{1}{6}}\iint_{I_{1,2}}\frac{|k+l|^{\frac{2}{3}}}{\nu^{\frac{1}{4}}|k|^{\frac{1}{4}}}
E_{l}E_{k}dldk
\lesssim \nu^{-\frac{1}{2}}\|E_{l}\|_{L^1_l}\|E_{k}\|_{L^1_k},
\end{align*}
where we have used that
\begin{align*}
\frac{|k+l|^{\frac23}}{|k|^{\frac{1}{4}}}\lesssim \frac{|k|^{\frac23}+|l|^{\frac23}}{|k|^{\frac14}}\lesssim \frac{|k|^{\frac23}+|k|^{\frac23}}{|k|^{\frac14}}\lesssim |k|^{\frac{5}{12}}\lesssim 1, \quad (l,k)\in I_{1,2}.
\end{align*}

It is clear that $A_{2,2}$ can be bounded by
\begin{align*}
A_{2,2}\leq&\nu^{-\frac{1}{6}}\iint_{I_{2,2}}(|k|^{\frac{2}{3}}+|l|^{\frac{2}{3}})\n{u_{l}^{1}\om_{k}}_{L_{t}^{2}L_{y}^{2}}dl dk.
\end{align*}
For the term $|k|^{\frac{2}{3}}\n{u_{l}^{1}\om_{k}}_{L_{t}^{2}L_{y}^{2}}$ with $(l,k)\in I_{2,2}$, we have by \eqref{equ:u00,w22} that
\begin{align*}
\nu^{-\frac16}|k|^{\frac{2}{3}}\n{u_{l}^{1}\om_{k}}_{L_{t}^{2}L_{y}^{2}}\leq \nu^{-\frac16}\frac{|k|^{\frac23}}{\nu^{\frac14}|k|^{\frac14}}E_{l}E_{k}=\nu^{-\frac{5}{12}}|k|^{\frac{5}{12}}E_{l}E_{k}\leq \nu^{-\frac{1}{2}}E_{l}E_{k}.
\end{align*}
For the term $|l|^{\frac{2}{3}}\n{u_{l}^{1}\om_{k}}_{L_{t}^{2}L_{y}^{2}}$, we use both \eqref{equ:u00,w22} and \eqref{equ:u20,w02} to obtain
\begin{align*}
\nu^{-\frac{1}{6}}|l|^{\frac{2}{3}}\n{u_{l}^{1}\om_{k}}_{L_{t}^{2}L_{y}^{2}}\leq &\nu^{-\frac{1}{6}}|l|^{\frac{2}{3}}(\n{u_{l}^{1}}_{L^\infty_t L^\infty_y }\n{\om_k}_{L^2_tL^2_y})^{\frac12} (\n{u_{l}^{1}}_{L^2_t L^\infty_y }\n{\om_k}_{L^\infty_tL^2_y})^{\frac12} \\ \leq&\nu^{-\frac{1}{6}}|l|^{\frac{2}{3}}(\nu^{-\frac14}|k|^{-\frac14}E_{l}E_{k})^{\frac12} (\nu^{-\frac38}|l|^{-\frac38}|k|^{\frac14}E_lE_k)^{\frac12}\\
=& \nu^{-\frac{23}{48}}|l|^{\frac{23}{48}}E_{l}E_{k}\leq \nu^{-\frac{1}{2}}E_{l}E_{k}.
\end{align*}
The above two estimates imply that
\begin{align*}
A_{2,2}\lesssim \nu^{-\frac12}\n{E_l}_{L_{l}^{1}}\n{E_k}_{L_{k}^{1}}.
\end{align*}

For $A_{3,2}$, we have
\begin{align*}
A_{3,2}
\leq&\nu^{-\frac{1}{6}}\iint_{I_{3,2}}(|k|^{\frac{2}{3}}+|l|^{\frac{2}{3}})\n{u_{l}^{1}\om_{k}}_{L_{t}^{2}L_{y}^{2}}dl dk.
\end{align*}
Applying \eqref{equ:u00,w22} to $|k|^{\frac{2}{3}}\n{u_{l}^{1}\om_{k}}_{L_{t}^{2}L_{y}^{2}}$, and \eqref{equ:u00,w22}--\eqref{equ:u20,w02} to
$|l|^{\frac{2}{3}}\n{u_{l}^{1}\om_{k}}_{L_{t}^{2}L_{y}^{2}}$, we arrive at
\begin{align*}
A_{3,2}\leq& \nu^{-\frac{1}{6}}\iint_{I_{3,2}}\frac{|k|^{\frac23}}{\nu^{\frac14}|k|^{\frac14}}E_{l}E_{k}dldk\\
&+\nu^{-\frac{1}{6}}\iint_{I_{3,2}}|l|^{\frac23}(\nu^{-\frac14}|k|^{-\frac14}|l|^{-\frac12}E_{l}E_{k})^{\frac13} (\nu^{-\frac38}|l|^{-\frac34}|k|^{\frac14}E_lE_k)^{\frac23} dldk\\
=&\iint_{I_{3,2}}\lrs{\nu^{-\frac{5}{12}}|k|^{\frac{5}{12}}+\nu^{-\frac{1}{2}}|k|^{\frac16}}E_{l}E_{k}dldk
\lesssim \nu^{-\frac12}\n{E_l}_{L_{l}^{1}}\n{E_k}_{L_{k}^{1}},
\end{align*}
where we have used $|k|\leq 1$ for $k\in I_2$.

\textbf{Case \textrm{III}: $I_{1,3}\cup I_{2,3}\cup I_{3,3}$.} Due to $\min\{\nu^{-\frac16}|k+l|^{\frac23},\nu^{-\frac{1}{2}}\}\leq \nu^{-\frac14}|k+l|^{\frac{1}{2}}$, we have
\begin{align*}
&\sum_{i=1}^{3}\iint_{I_{i,3}}\min\lr{\nu^{-\frac16}|k+l|^{\frac23},\nu^{-\frac{1}{2}}}\n{u_{l}^{1}\om_{k}}_{L_{t}^{2}L_{y}^{2}}dl dk\\
\leq&\sum_{i=1}^{3}\iint_{I_{i,3}}\nu^{-\frac14}|k+l|^{\frac{1}{2}}\n{u_{l}^{1}\om_{k}}_{L_{t}^{2}L_{y}^{2}}dl dk
=:A_{1,3}+A_{2,3}+A_{3,3}.
\end{align*}

We use \eqref{equ:u00,w22} to obtain
\begin{align*}
A_{1,3}+A_{2,3}=&\iint_{I_{1,3}\cup I_{2,3}}\nu^{-\frac14}|k+l|^{\frac{1}{2}}\n{u_{l}^{1}\om_{k}}_{L_{t}^{2}L_{y}^{2}}dl dk\\
\leq& \iint_{I_{1,3}\cup I_{2,3}} \frac{|k+l|^{\frac12}}{\nu^{\frac12}|k|^{\frac12}}E_lE_kdl dk\lesssim \nu^{-\frac12}\n{E_l}_{L_{l}^{1}}\n{E_k}_{L_{k}^{1}},
\end{align*}
where we have used
\begin{align*}
&|k+l|^{\frac12}|k|^{-\frac12}\leq 2,\quad (k,l)\in I_{1,3}\cup I_{2,3}.
\end{align*}

For $A_{3,3}$, thanks to \eqref{equ:u00,w22} and $|l||k|\gtrsim |k+l|$ for $(l,k)\in I_{3,3}$, we deduce
\begin{align*}
A_{3,3}=&\iint_{I_{3,3}}\nu^{-\frac14}|k+l|^{\frac{1}{2}}\n{u_{l}^{1}\om_{k}}_{L_{t}^{2}L_{y}^{2}}dl dk\\
\lesssim& \iint_{I_{3,3}} \frac{|k+l|^{\frac{1}{2}}}{\nu^{\frac12}|l|^{\frac12}|k|^{\frac12}}E_l E_k dldk\lesssim \nu^{-\frac12}\n{E_l}_{L_{l}^{1}}\n{E_k}_{L_{k}^{1}}.
\end{align*}
Inserting the estimates of $A_{i,j}$ into \eqref{f1 esti}, we arrive at
\begin{align*}
\bbn{\min\lr{\nu^{-\frac16}|k|^{\frac23},\nu^{-\frac{1}{2}}}f_{k}^{1}}_{L_{k}^{1}L_{t}^{2}L_{y}^{2}}\leq C\nu^{-\frac12}\|E_k\|^2_{L^1_k}.
\end{align*}
This inequality, together with \eqref{f2kL1kL2tL2y}, completes the proof of \eqref{f1kLk1L2tL2y}.
\end{proof}
\section{Enhanced Dissipation Estimates}\label{sec: enhanced dissipation}
In this section, we show that the linearized Navier-Stokes equation
has the enhanced dissipation effect when the frequencies $|k|\geq \nu^{1-}$.

We first point out that the solution of the linear equation
\begin{equation*}\left\{
\begin{aligned}
&\pa_t \om_{Na}+\nu(\pa^2_y-k^2)\om_{Na}+iky \om_{Na}=0,\\
&\om_{Na}(t,k,\pm1)=0, \quad \om_{Na}(0,k,y)=\om^{in}(k,y)
\end{aligned}
\right.
\end{equation*}
has the enhanced dissipation effect on frequencies $|k|\geq \nu^{1-}$.
Actually, together with \cite[Lemma 6.3]{Chen-Li-Wei-Zhang}, Proposition \ref{pro: reso esti FL2} and the Gearhart-Pr\"uss type lemma in \cite{Wei} can imply that the solution $\om_{Na}$ has the following estimate:
\begin{align*}
\|e^{\epsilon(\nu|k|^{2})^{\frac13}t}\om_{Na}\|_{L^\infty_t L^2_y}\leq \|\om^{in}\|_{L^2_y}, \quad  |k|\geq 10\nu,
\end{align*}
where $\epsilon$ is a small constant.

Now, we prove Theorem \ref{Th: linear problem enhance}, which presents the enhanced dissipation estimates with non-slip boundary condition. Recall the linearized equation
\begin{equation}
\left\{
\begin{aligned}\label{equ: omli}
&\pa_t\om-\nu(\pa^2_y-k^2)\om+ik y\om=0, \\
&\om=(\pa^2_y-k^2)\psi,\\
&\psi(t,k,\pm 1)=\psi'(t,k,\pm 1)=0,\\
&\om(0)=\om^{in}(k,y).
\end{aligned}
\right.
\end{equation}
Denote $u=(\pa_y,-ik)(\pa^2_y-k^2)^{-1}\om$. According to the range of $k$: $0\leq |k|<10\nu$, $10\nu\leq |k|<1$ and $|k|\geq 1$, we establish the enhanced dissipation estimates in each range.

For $|k|\geq 1$, it follows \cite[Proposition 6.2-6.3]{Chen-Wei-Zhang} that
\begin{align}\label{k>1}
\|e^{\epsilon(\nu k^2)^{\frac13} t}u\|_{L^\infty_t L^2_y}+\|e^{\epsilon(\nu k^2)^{\frac13} t}u\|_{L^2_t L^2_y}
\leq C|k|^{-1}\|\om^{in}\|^2_{L^2_y}.
\end{align}
For $0\leq |k|< 10\nu$, by the energy estimate method, we easily obtain
\begin{align}\label{k<10nu}
\|e^{\epsilon\nu  t}u\|_{L^\infty_t L^2_y}+\nu^{\frac12}\|e^{\epsilon\nu  t}u\|_{L^2_t L^2_y}
\leq C\|\om^{in}\|_{L^2_y}.
\end{align}
For the frequency range $10\nu\leq |k|<1$, we have
\begin{proposition}\label{Prop homo pro enha}
Let $10\nu\leq |k|<1$ and $\om$ be a solution of \eqref{equ: omli} with  $\om^{in}\in L^2(I)$ and  $\lan \om_0, e^{\pm k y}\ran=0$. Then there exists a constant $C>0$ independent of $\nu$, $k$ such that
\begin{align*}
\|e^{\epsilon(\nu k^2)^{\frac13}t} u\|_{L^\infty_t L^2_y}+|k|^{\frac12}\|e^{\epsilon(\nu k^2)^{\frac13}t} u\|_{L^2_t L^2_y}
\leq C(\nu^{\frac13}|k|^{-\frac13}\|\pa_y \om^{in}\|_{L^2_y}+\|\om^{in}\|_{L^2_y}).
\end{align*}
\end{proposition}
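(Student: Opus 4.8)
The plan is to obtain the enhanced dissipation by re-running the proof of Proposition \ref{Prop homo pro} with an exponential weight inserted, using crucially that the resolvent estimates of Section \ref{sec: reso esti} were proved with a built-in spectral shift. Since $10\nu\le|k|<1$ forces $\la_\nu=(\nu k^2)^{\frac13}$, I set $\widetilde\om:=e^{\epsilon_0(\nu k^2)^{1/3}t}\om$, which solves the homogeneous problem \eqref{equ: omli} with $\mathcal L$ replaced by $\mathcal L-\epsilon_0(\nu k^2)^{1/3}$ and the same data. Its Laplace transform $W(\la):=\int_0^{\infty}\widetilde\om\, e^{-it\la}dt$ then satisfies the inhomogeneous OS equation \eqref{equ: psij} with source $\om^{in}$, shift parameter $\epsilon=\epsilon_0$, and $\la'=-\la/k$; equivalently, $W(\la)$ is the resolvent of $\mathcal L$ on the line shifted by $-i\epsilon_0(\nu k^2)^{1/3}$ into the stable half-plane. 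The decisive point is that Propositions \ref{pro: reso esti FL2} and \ref{lemma:non-slip boundary,resolvent} hold uniformly in $0\le\epsilon\le\epsilon_0$, so, choosing $\epsilon_0$ within that admissible range, every resolvent bound used in Proposition \ref{Prop homo pro} carries over unchanged to $W(\la)$. This reduces the proposition to repeating the argument of Proposition \ref{Prop homo pro} with the weight present.

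Following that scheme I decompose $\widetilde\om=\widetilde\om^{(1)}+\widetilde\om^{(2)}+\widetilde\om^{(3)}$ exactly as in Proposition \ref{Prop homo pro}: $\widetilde\om^{(1)}=e^{(\epsilon_0-1)(\nu k^2)^{1/3}t}e^{-itky}\om^{in}$ is the weighted explicit solution, $\widetilde\om^{(2)}$ solves the inhomogeneous problem with zero data and forcing $-(\nu k^2-(\nu k^2)^{1/3})\widetilde\om^{(1)}+\nu\pa_y^2\widetilde\om^{(1)}$, and $\widetilde\om^{(3)}$ is the homogeneous correction enforcing $\lan\widetilde\om^{(3)},e^{\pm ky}\ran=-\lan\widetilde\om^{(1)},e^{\pm ky}\ran$. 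For $\widetilde\om^{(1)}$ the weight $e^{(\epsilon_0-1)(\nu k^2)^{1/3}t}$ decays because $\epsilon_0<1$, so all weighted time integrals converge; using $\pa_y(e^{-itky}\om^{in})=e^{-itky}(\pa_y\om^{in}-itk\om^{in})$ and integrating the resulting $1$ and $t^2k^2$ weights against $e^{2(\epsilon_0-1)(\nu k^2)^{1/3}t}$ gives $\nu\|\pa_y\widetilde\om^{(1)}\|_{L^2L^2}^2\lesssim\nu^{\frac23}|k|^{-\frac23}\|\pa_y\om^{in}\|_{L^2}^2+\|\om^{in}\|_{L^2}^2$, which is precisely the target right-hand side. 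Since $|e^{-itky}|=1$, the velocity of $\widetilde\om^{(1)}$ obeys $\|u\|_{L^\infty_tL^2_y}\lesssim\|\widetilde\om^{(1)}\|_{L^\infty_tL^2_y}\le\|\om^{in}\|_{L^2_y}$, and its $L^2_tL^2_y$ norm with prefactor $|k|^{\frac12}$ is handled by the oscillation-in-$y$ gain behind \eqref{u1HL2L2}, now evaluated against the weighted kernel of width $\delta=\nu^{\frac13}|k|^{-\frac13}$, which produces the extra correction $\nu^{\frac13}|k|^{-\frac13}\|\pa_y\om^{in}\|_{L^2}$.

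For $\widetilde\om^{(2)}$ and $\widetilde\om^{(3)}$ I invoke the shifted space-time estimates. Proposition \ref{Prop: inhomo} with $\epsilon=\epsilon_0$ bounds the $\rho_k^{1/2}$-weighted $L^2L^2$, the $L^2L^2$, and the $|k|\|u\|_{L^2L^2}^2$ norms of $\widetilde\om^{(2)}$ by the norms of its forcing, which in turn are controlled by the $\widetilde\om^{(1)}$ estimates above. The homogeneous piece is treated through the representation $W^{(3)}(\la)=-\widetilde c_1(\la)w_1-\widetilde c_2(\la)w_2$ exactly as in the $\om^{(3)}_H$ step of Proposition \ref{Prop homo pro}, with the $\la$-decay of $\widetilde c_l$ coming from Lemma \ref{claim} applied to $g=\om^{in}$. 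The single norm absent from Proposition \ref{Prop homo pro} is $\|u\|_{L^\infty_tL^2_y}$; I obtain it from $\|u(t)\|_{L^2_y}\le\frac{1}{2\pi}\|u(\la)\|_{L^1_\la L^2_y}$ combined with the Biot--Savart bound $\|u(\la)\|_{L^2_y}\lesssim\|W(\la)\|_{L^1_y}$ (valid for $|k|<1$, cf. \eqref{uH3L2} and \eqref{esti: w1+w2L1}) and the $L^1_\la$-integrability of the coefficients $c_l^{(j)}(\la),\widetilde c_l(\la)$ furnished by Lemmas \ref{lemma: c1,c2 FL2}, \ref{lemma: c1,c2 FH-1} and Lemma \ref{claim}. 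Summing the contributions of $\widetilde\om^{(1)},\widetilde\om^{(2)},\widetilde\om^{(3)}$ gives the asserted estimate.

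The main obstacle is exactly this $L^\infty_tL^2_y$ control of the velocity. The pointwise-in-$\la$ resolvent bound $\|u(\la)\|_{L^2_y}\lesssim\nu^{-\frac16}|k|^{-\frac56}\|\om^{in}\|_{L^2}$ is constant in $\la$ and therefore fails to be $L^1_\la$-integrable, so a direct energy or semigroup argument cannot close: near the boundary the weight $\rho_k$ degenerates and the dissipation there is not coercive. The remedy is to pass to the homogeneous decomposition, whose coefficients $c_l,\widetilde c_l$ decay in $\la$; this decay, invisible at the level of a single resolvent estimate, is what supplies the integrability. A subsidiary but essential bookkeeping point is that the spectral shift equals $\epsilon_0\nu^{\frac13}|k|^{\frac23}$, which matches the $\epsilon\nu^{\frac13}|k|^{\frac23}$ term already present in \eqref{equ: psij}; hence any $\epsilon_0\le\epsilon_1$ keeps the whole of Section \ref{sec: reso esti} applicable while still yielding a strictly positive decay rate $\epsilon_0(\nu k^2)^{\frac13}$.
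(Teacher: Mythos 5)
Your architecture coincides with the paper's own proof in all but one respect: the paper likewise exploits the $\epsilon$-shift already built into \eqref{equ: psiNa} and \eqref{equ: psij} (so that Propositions \ref{pro: reso esti FL2} and \ref{lemma:non-slip boundary,resolvent} apply verbatim on the shifted line), uses the same decomposition $\widetilde{\om}^{(1)}+\widetilde{\om}^{(2)}+\widetilde{\om}^{(3)}$ with the weighted explicit solution, the weighted inhomogeneous estimate (its Lemma \ref{Prop: inhomo enhance}), and Lemma \ref{claim} with $g=\om^{in}$ for the coefficients of $\widetilde{\om}^{(3)}$. The genuine divergence, and the gap, lies in your treatment of $\|e^{\epsilon(\nu k^2)^{1/3}t}u\|_{L^\infty_t L^2_y}$. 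You correctly identify that the flat-in-$\la$ resolvent bound is not $L^1_\la$-integrable and propose to restore integrability via the $\la$-decay of the coefficients $c^{(j)}_l$, $\widetilde{c}_l$. But that remedy covers only the boundary-corrector parts of the solution: $\widetilde{\om}^{(2)}$ also contains the Navier-slip component $\om_{Na}$ (cf.\ \eqref{equ:omNa}), which carries no coefficient, and whose Laplace transform obeys exactly the non-decaying pointwise bound you named as the obstacle; your $L^1_\la$ route therefore cannot produce $\|u_{Na}\|_{L^\infty_t L^2_y}$. Nor can you fall back on "repeating Proposition \ref{Prop homo pro}": that proposition controls $\nu^{\frac12}|k|^{-\frac12}\|\om\|^2_{L^\infty L^2}$, and converting to $u$ by Biot--Savart costs a factor $\nu^{-\frac14}|k|^{\frac14}$, which destroys the $O(1)$ bound. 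Closing your argument for this piece requires the time-domain energy estimate \eqref{esti: omNaL2f1f2} for $\om_{Na}$ (in its weighted form) together with $\|u_{Na}\|_{L^2}\lesssim\|\om_{Na}\|_{L^2}$ --- a step absent from your writeup and not supplied by your stated remedy.

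The paper sidesteps the whole issue: as in Lemma \ref{Prop: inhomo enhance}, testing the homogeneous equation against $-\psi$ gives $\frac12\frac{d}{dt}\|u\|^2_{L^2}+\nu\|\om\|^2_{L^2}\lesssim |k|\|\pa_y\psi\|^2_{L^2}$, whence $\frac{d}{dt}\|e^{\epsilon(\nu k^2)^{1/3}t}u\|^2_{L^2}\lesssim\big((\nu k^2)^{\frac13}+|k|\big)\|e^{\epsilon(\nu k^2)^{1/3}t}u\|^2_{L^2}$; since $(\nu k^2)^{\frac13}\leq|k|$ for $|k|\geq 10\nu$, the weighted $L^\infty_t$ norm of the \emph{full} velocity is reduced wholesale to $|k|\|e^{\epsilon(\nu k^2)^{1/3}t}u\|^2_{L^2_tL^2_y}$, which is exactly the norm your (and the paper's) $L^2_t$ analysis controls. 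Adopting this reduction both repairs the gap and renders your $L^1_\la$ machinery for the $L^\infty_t$ norm unnecessary. A smaller bookkeeping inaccuracy: \eqref{u1HL2L2} yields $|k|^{\frac12}\|e^{\epsilon(\nu k^2)^{1/3}t}u^{(1)}\|_{L^2_tL^2_y}\lesssim\|\om^{in}\|_{L^2}$ with no derivative term; the contribution $\nu^{\frac13}|k|^{-\frac13}\|\pa_y\om^{in}\|_{L^2}$ enters through the forcing $\nu\pa_y^2\widetilde{\om}^{(1)}$ in the equation for $\widetilde{\om}^{(2)}$, not through the oscillatory gain for $\widetilde{\om}^{(1)}$ itself.
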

Combining \eqref{k>1}, \eqref{k<10nu} with Proposition \ref{Prop homo pro enha}, we complete the proof of Theorem \ref{Th: linear problem enhance}.
In order to prove Proposition \ref{Prop homo pro enha}, we first give the following lemma.
\begin{lemma}\label{Prop: inhomo enhance}
Let $\om_{I}$ be a solution of \eqref{equ: omI} with $f_1,\,f_2\in L^2$. Then there exists a constant $\epsilon>0$ independent of $\nu$, $k$ such that for $10\nu\leq |k|<1$, we have
\begin{align*}
\|e^{\epsilon\nu k^2 t}u_{I}\|_{L^\infty_t L^2_y}+|k|^{\frac12}\|e^{\epsilon\nu k^2 t}u_{I}\|_{L^2_t L^2_y}
\lesssim \nu^{-\frac16}|k|^{\frac13}\|e^{\epsilon(\nu k^2)^{\frac13}t}f_1\|_{L^2_tL^2_y}+\nu^{-\frac12} \|e^{\epsilon(\nu k^2)^{\frac13}t}f_2\|_{L^2_tL^2_y}.
\end{align*}
\end{lemma}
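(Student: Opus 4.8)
The plan is to absorb the exponential weight into the spectral parameter and thereby reduce everything to the space-time estimate already proved in Proposition \ref{Prop: inhomo}. I put $\delta=\epsilon\nu k^2$ and $\tilde\om=e^{\delta t}\om_{I}$, $\tilde u_{I}=e^{\delta t}u_{I}$. Taking the Laplace--Fourier transform in time, $\tilde w_{I}(\la)=\int_0^\infty\tilde\om\,e^{-it\la}\,dt=w_{I}(\la+i\delta)$ solves the $\epsilon$-regularized Orr--Sommerfeld problem \eqref{equ: psij} of Section \ref{sec: reso esti} with real spectral parameter $-\la/k$ and regularizing term $-\delta\,\tilde w_{I}=-\big(\epsilon\nu^{2/3}|k|^{4/3}\big)\nu^{1/3}|k|^{2/3}\tilde w_{I}$. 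Since the effective regularization parameter $\epsilon'=\epsilon\nu^{2/3}|k|^{4/3}$ satisfies $\epsilon'\le\epsilon\le\epsilon_0$ uniformly on $10\nu\le|k|<1$ (because $\nu^{2/3}|k|^{4/3}<1$), the resolvent estimates of Propositions \ref{pro: reso esti FL2} and \ref{lemma:non-slip boundary,resolvent}, the coefficient bounds of Lemmas \ref{lemma: c1,c2 FL2}--\ref{lemma: c1,c2 FH-1}, and hence the entire proof of Proposition \ref{Prop: inhomo}, apply verbatim to $\tilde\om$ with constants independent of $\delta$. (Finiteness of the weighted norms needed to justify Plancherel is obtained first by a routine truncation of the weight and a limiting argument.) Finally, since $\nu k^2\le(\nu k^2)^{1/3}$ we have $\|e^{\delta t}f_j\|_{L^2L^2}\le\|e^{\epsilon(\nu k^2)^{1/3}t}f_j\|_{L^2L^2}$, which is why the stated right-hand side may carry the larger weight.

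With this reduction the $L^2L^2$ part is immediate. Proposition \ref{Prop: inhomo} applied to $\tilde\om$ gives $|k|\,\|\tilde u_{I}\|_{L^2L^2}^2\lesssim\nu^{-1/3}|k|^{4/3}\|e^{\delta t}f_1\|_{L^2L^2}^2+\nu^{-1}\|e^{\delta t}f_2\|_{L^2L^2}^2$, so that $|k|^{1/2}\|\tilde u_{I}\|_{L^2L^2}\lesssim\nu^{-1/6}|k|^{2/3}\|e^{\delta t}f_1\|_{L^2L^2}+\nu^{-1/2}\|e^{\delta t}f_2\|_{L^2L^2}$, and $|k|^{2/3}\le|k|^{1/3}$ leaves room to spare. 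The whole difficulty lies in the $L^\infty L^2$ bound on $\tilde u_{I}$: Proposition \ref{Prop: inhomo} controls only $\|\tilde\om\|_{L^\infty L^2}$, and with the weak weight $\nu^{1/2}|k|^{-1/2}$; feeding this into any Biot--Savart bound is off by a factor $\sim\nu^{-1/2}$ as $|k|\to1$ and so is useless.

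The new ingredient for the $L^\infty L^2$ bound is the estimate $\|u\|_{L^2}\lesssim\|\om\|_{L^1}$, i.e. \eqref{esti: psiL2omL2}, which on the bounded channel removes the spurious $|k|^{-1}$ loss. I decompose $\tilde\om=\tilde\om_{Na}+\sum_{j,l}\tilde\om^{(j)}_l$ exactly as in the proof of Proposition \ref{Prop: inhomo}. For the Navier--slip part, $\|\tilde u_{Na}\|_{L^\infty L^2}\lesssim\|\tilde\om_{Na}\|_{L^\infty L^1}\lesssim\|\tilde\om_{Na}\|_{L^\infty L^2}$, and $\|\tilde\om_{Na}\|_{L^\infty L^2}$ is bounded by the weighted analogue of the clean $\|\om\|^2$-energy estimate \eqref{esti: omNaL2f1f2}: the transport term drops because $\operatorname{Re}\langle iky\,\tilde\om_{Na},\tilde\om_{Na}\rangle=0$, and the weight term $\delta\|\tilde\om_{Na}\|^2$ is swallowed by the dissipation $\nu k^2\|\tilde\om_{Na}\|^2$, yielding $\|\tilde\om_{Na}\|_{L^\infty L^2}^2\lesssim\nu^{-1/3}|k|^{4/3}\|e^{\delta t}f_1\|_{L^2L^2}^2+\nu^{-1}\|e^{\delta t}f_2\|_{L^2L^2}^2$, which already beats the target. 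For each correction term the inversion formula $\tilde u^{(j)}_l(t)=\frac1{2\pi}\int_{\R}c^{(j)}_l(\la)\,(\pa_y,-ik)(\pa^2_y-k^2)^{-1}w_l(\la)\,e^{it\la}\,d\la$ combined with $\|w_l\|_{L^1}\lesssim1$ (estimate \eqref{esti: w1+w2L1}) gives $\|\tilde u^{(j)}_l(t)\|_{L^2_y}\lesssim\int_{\R}|c^{(j)}_l(\la)|\,d\la$; I then bound $\|c^{(j)}_l\|_{L^1_\la}$ by Lemma \ref{lemma: c1,c2 FL2} for $j=1$ and Lemma \ref{lemma: c1,c2 FH-1} for $j=2$, applying Hölder in $\la$ against the decay $(1+|\la/k\pm1|)^{-3/4}$ and using $\|(1+|\la/k\pm1|)^{-3/4}\|_{L^2_\la}\lesssim|k|^{1/2}$. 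This produces $\|\tilde u^{(1)}_l\|_{L^\infty L^2}\lesssim\nu^{-1/6}|k|^{2/3}\|e^{\delta t}f_1\|_{L^2L^2}$ and $\|\tilde u^{(2)}_l\|_{L^\infty L^2}\lesssim\nu^{-1/2}\|e^{\delta t}f_2\|_{L^2L^2}$; summing all pieces and invoking $\|e^{\delta t}f_j\|\le\|e^{\epsilon(\nu k^2)^{1/3}t}f_j\|$ closes the estimate.

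The essential obstacle, and the reason one cannot argue by a direct energy estimate for $u_{I}$, is the transport term it generates: pairing the equation for $\tilde\om$ with $-\tilde\psi$ produces $\operatorname{Re}\langle iky\,\tilde\om,\tilde\psi\rangle=k\,\operatorname{Im}\int\overline{\tilde\psi}\,\pa_y\tilde\psi$, of size $\sim|k|\|\tilde\psi\|\|\pa_y\tilde\psi\|\sim\|\tilde u_{I}\|^2$, which cannot be absorbed into the dissipation $\nu\|\tilde\om\|^2$ once $|k|\ge10\nu\gg\nu$ (and time-integrating it only reproduces $\|\tilde u_{I}\|_{L^2L^2}^2$, which overshoots the target by $|k|^{-1}$ in the $f_2$ slot). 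Transferring the time-supremum to the vorticity, where the transport term is skew-symmetric and vanishes, and only then returning to the velocity through the $L^1\to L^2$ Biot--Savart inequality, is precisely what dissolves the obstruction; the bookkeeping of the $\la$-decay of the coefficients $c^{(j)}_l$ is the one genuinely computational step.
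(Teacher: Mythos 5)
Your proof is correct, but for the crucial $L^\infty_t L^2_y$ bound it takes a genuinely different (and much longer) route than the paper. The paper argues directly at the level of the velocity: pairing the equation for $\om_I$ with $-\psi_I$ gives
\begin{align*}
\frac12\frac{d}{dt}\|u_I\|^2_{L^2}+\nu\|\om_I\|^2_{L^2}\leq \nu^{-\frac13}|k|^{\frac23}\|f_1\|^2_{L^2}+\nu^{-1}\|f_2\|^2_{L^2}+C|k|\|\pa_y\psi_I\|^2_{L^2},
\end{align*}
so after inserting the exponential weight, the $L^\infty_t L^2_y$ norm is reduced to $\big((\nu k^2)^{\frac13}+|k|\big)\|e^{\epsilon(\nu k^2)^{1/3}t}u_I\|^2_{L^2_tL^2_y}\lesssim |k|\|e^{\epsilon(\nu k^2)^{1/3}t}u_I\|^2_{L^2_tL^2_y}$, and this single quantity is closed by one application of the regularized non-slip resolvent estimate (Proposition \ref{lemma:non-slip boundary,resolvent}) via Plancherel. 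You instead re-run the full decomposition $\widetilde{\om}=\widetilde{\om}_{Na}+\sum\widetilde{\om}^{(j)}_l$ of Proposition \ref{Prop: inhomo}, handle the Navier-slip part by a weighted vorticity energy estimate, and handle the corrections through $\|\widetilde{u}^{(j)}_l(t)\|_{L^2_y}\lesssim\|c^{(j)}_l\|_{L^1_\la}$ using $\|w_l\|_{L^1}\lesssim 1$ and the decay of Lemmas \ref{lemma: c1,c2 FL2}--\ref{lemma: c1,c2 FH-1}. This works, and your observation that estimating $u$ rather than $\om$ lets one use \eqref{esti: psiL2omL2} together with \eqref{esti: w1+w2L1} and bypass the weighted $\rho_k$ machinery is a genuine simplification of that step. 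Two minor points: for $j=1$, Lemma \ref{lemma: c1,c2 FL2} supplies decay $(1+|\la/k+1|)^{-1}$, not $(1+|\la/k+1|)^{-3/4}$ (you only need the weaker power, so nothing breaks); and your choice $\delta=\epsilon\nu k^2$ proves the literal statement, whereas the paper's proof yields the stronger left-hand weight $e^{\epsilon(\nu k^2)^{1/3}t}$ — which is what is actually invoked later in \eqref{esti:u2} — though your argument extends to that weight, since the regularization then becomes exactly the $\epsilon\nu^{\frac13}|k|^{\frac23}$ term of \eqref{equ: psij}.

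Your closing claim that a direct energy estimate for $u_I$ must fail is mistaken — it is precisely the paper's argument. Your sizing $\operatorname{Re}\langle iky\,\widetilde{\om},\widetilde{\psi}\rangle\sim |k|\|\widetilde{\psi}\|\|\pa_y\widetilde{\psi}\|\sim\|\widetilde{u}_I\|^2$ silently drops the factor $|k|$: the transport term is in fact $\lesssim |k|\|\widetilde{u}_I\|^2_{L^2}$, and after time integration one faces $|k|\|e^{\epsilon(\nu k^2)^{1/3}t}u_I\|^2_{L^2_tL^2_y}$, which by \eqref{pay,xi psi L2} and Plancherel is bounded by $\nu^{-\frac13}|k|^{\frac23}\|e^{\epsilon(\nu k^2)^{1/3}t}f_1\|^2_{L^2L^2}+\nu^{-1}\|e^{\epsilon(\nu k^2)^{1/3}t}f_2\|^2_{L^2L^2}$ — exactly the square of the target, with no $|k|^{-1}$ overshoot in the $f_2$ slot. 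The term never needs to be absorbed into the dissipation; it is closed by the very $L^2_tL^2_y$ resolvent bound you yourself establish, which would have spared you the entire decomposition in the $L^\infty_tL^2_y$ step.
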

\begin{proof}
Using energy method, we easily obtain
\begin{align*}
 \frac{1}{2}\frac{d}{dt}\|u_{I}\|^2_{L^2}+\nu \|\om_{I}\|^2_{L^2}
 \leq \nu^{-\frac13}|k|^{\frac23}\|f_1\|^2_{L^2}+\nu^{-1}\|f_2\|^2_{L^2}+C|k|\|\pa_y\psi_{I}\|^2_{L^2}.
\end{align*}
Thanks to
\begin{align*}
\frac{d}{dt}\|e^{\epsilon(\nu k^2)^{\frac13} t}u_{I}\|^2_{L^2}
\leq e^{\epsilon(\nu k^2)^{\frac13} t}\frac{d}{dt}\|u_I\|^2_{L^2}+\epsilon (\nu k^2)^{\frac13}e^{\epsilon(\nu k^2)^{\frac13} t} \|u_{I}\|^2_{L^2},
\end{align*}
we can reduce the estimate of $\|e^{\epsilon(\nu k^2)^{\frac13} t}u_{I}\|_{L^\infty_t L^2_y}$ to $((\nu k^2)^{\frac13} +|k|)\| e^{\epsilon(\nu k^2)^{\frac13} t} u_{I}\|^2_{L^2_tL^2_y}$. For $10\nu\leq |k|<1$, due to
$$(\nu k^2)^{1/3} \| e^{\epsilon(\nu k^2)^{\frac13} t} u_{I}\|^2_{L^2_tL^2_y}\leq |k|\| e^{\epsilon(\nu k^2)^{\frac13} t} u_{I}\|^2_{L^2_tL^2_y},$$
it remains to bound $|k|\| e^{\epsilon(\nu k^2)^{\frac13} t} u_{I}\|^2_{L^2_tL^2_y}$. Therefore, we introduce
\begin{align*}
\widetilde{w}_{I}(\la,k,y)&=\int^{+\infty}_0 \om_{I}(t,k,y) e^{\epsilon(\nu k^2)^{\frac13}t-it\la} dt,\\
\widetilde{F}_{j}(\la,k,y)&=\int^{+\infty}_0 f_{j}(t,k,y)e^{\epsilon(\nu k^2)^{\frac13}t-it\la} dt,\, j=1,2,
\end{align*}
which satisfy
\begin{equation*}
\left\{
\begin{aligned}
&-\nu (\pa^2_y-k^2)\widetilde{w}_{I}+ik \Big(y+\frac{\la}{k}\Big)\widetilde{w}_{I}-\epsilon(\nu k^2)^{\frac13}\widetilde{w}_{I}=-ik \widetilde{F}_1-\pa_y \widetilde{F}_2,\\
&(\pa^2_y-k^2)\widetilde{\phi}_{I}=\widetilde{w}_{I},\\
&\widetilde{\phi}_{I}(\pm1)=\pa_y\widetilde{\phi}_{I}(\pm 1)=0.
\end{aligned}
\right.
\end{equation*}
Let $\widetilde{u}_{I}=(\pa_y\widetilde{\phi}_{I},-ik\widetilde{\phi}_{I})$.
In light of Proposition \ref{lemma:non-slip boundary,resolvent}, we obtain
\begin{align}\label{pay,xi psi L2}
\|\widetilde{u}_{I}\|_{L^2}\leq\nu^{-\frac16}|k|^{-\frac16}\| \widetilde{F}_1\|_{L^2}+\nu^{-\frac12}|k|^{-\frac12}\|\widetilde{F}_2\|_{L^2}.
\end{align}
Due to Plancherel's theorem, we arrive at
\begin{align*}
\|e^{\epsilon (\nu k^2)^{\frac13} t}u_{I}\|_{L^2_tL^2_y}\leq \nu^{-\frac16}|k|^{-\frac16}\|e^{\epsilon (\nu k^2)^{\frac13} t} f_1\|_{L^2_tL^2_y}+\nu^{-\frac12}|k|^{-\frac12}\|e^{\epsilon (\nu k^2)^{\frac13}t} f_2\|_{L^2_tL^2_y}.
\end{align*}
\end{proof}
\begin{proof}[Proof of Proposition \ref{Prop homo pro enha}]
Similar analysis as in the proof of Lemma \ref{Prop: inhomo enhance}, we can reduce the bound of $\|e^{\epsilon(\nu k^2)^{\frac13} t}u\|_{L^\infty_t L^2_y}$ to estimate $|k| \| e^{\epsilon(\nu k^2)^{\frac13} t} u\|^2_{L^2_tL^2_y}$.

We decompose $\om=\om^{(1)}+\om^{(2)}+\om^{(3)}$ as in Proposition \ref{Prop homo pro}, where $\om^{(1)},\, \om^{(2)},\, \om^{(3)}$ satisfy \eqref{equ: omH1}, \eqref{equ: omH2} and \eqref{equ: omH3}, respectively.
Then, we divide the proof into two steps.

\textbf{Step 1.} Estimates of $u^{(j)}$, $j=1,2$.
By Lemma \ref{Prop: inhomo enhance}, we have
\begin{equation}\label{esti:u2}
\begin{aligned}
&\|e^{\epsilon (\nu k^2)^{\frac13}t}u^{(2)}\|_{L^2_tL^2_y}\\
\lesssim &\nu^{-\frac16}|k|^{\frac13}\|e^{\epsilon (\nu k^2)^{\frac13}t}(\nu k^2-(\nu k^2)^{\frac13})\om^{(1)}\|_{L^2_tL^2_y}+\nu^{-\frac12} \|e^{\epsilon (\nu k^2)^{\frac13}t}\nu \pa_y \om^{(1)}\|_{L^2_tL^2_y}\\
\lesssim &\nu^{\frac16}|k|\|e^{\epsilon (\nu k^2)^{\frac13}t}\om^{(1)}\|_{L^2_tL^2_y}+\nu^{\frac12}\|e^{\epsilon (\nu k^2)^{\frac13}t} \pa_y \om^{(1)}\|_{L^2_tL^2_y}.
\end{aligned}
\end{equation}
We use \eqref{esti: om1HL2} and \eqref{payom1HL2L2} to get
\begin{align*}
\nu^{\frac16}|k|\|e^{\epsilon (\nu k^2)^{\frac13}t}\om^{(1)}\|_{L^2_tL^2_y}=& \nu^{\frac16}|k|\|e^{-(1-\epsilon)(\nu k^2)^{\frac13}t}\|_{L^2(0,\infty)} \|\om^{in}\|_{L^2}=\frac{|k|^{\frac23}\|\om^{in}\|_{L^2_y}}{2(1-\epsilon)}
\end{align*}
and
\begin{align*}
\nu^{\frac12} \|e^{\epsilon (\nu k^2)^{\frac13}t}\pa_y \om^{(1)}\|_{L^2_tL^2_y}\leq& \nu^{\frac13}|k|^{-\frac13}\|\pa_y \om^{in}\|^2_{L^2_y}+2\|\om^{in}\|_{L^2_y}.
\end{align*}
Inserting the above two estimates into \eqref{esti:u2}, we deduce
\begin{align*}
\|e^{\epsilon(\nu k^2)^{\frac13}t}u^{(2)}\|_{L^2_tL^2_y}\leq C(\nu^{\frac13}|k|^{-\frac13}\|\pa_y \om^{in}\|_{L^2_y}+\|\om^{in}\|_{L^2_y}).
\end{align*}
Using $u^{(1)}=e^{-(\nu k^2)^{\frac13} t}u^{(0)}$ and \eqref{u1HL2L2}, we get
\begin{align*}
\|e^{\epsilon(\nu k^2)^{\frac13}t}u^{(1)}\|_{L^2_tL^2_y}
=\|e^{-(1-\epsilon)(\nu k^2)^{\frac13}t}u^{(0)}\|_{L^2_tL^2_y}
\leq C |k|^{-\frac12}\|\om^{in}\|_{L^2_y}.
\end{align*}

\textbf{Step 2.} Estimates of $u^{(3)}$. Let
\begin{align*}
&\widetilde{w}(\la,\xi,y)=\int^{+\infty}_0\om^{(3)}(t,\xi,y) e^{-it\la+\epsilon(\nu k^2)^{\frac13}t}dt, \\
&\widetilde{u}(\la,y)=\int^{+\infty}_0u^{(3)} e^{-it\la+\epsilon(\nu k^2)^{\frac13}t}dt,\\
&\widetilde{c}_j(\la)=\int^{+\infty}_0a_j(t) e^{-it\la+\epsilon(\nu k^2)^{\frac13}t} dt, \,\, j=1,2.
\end{align*}
Then, we have
\begin{align*}
&(i\la-\nu (\pa^2_y- k^2)+i k y)\widetilde{ w}(\la, k,y)=0,\\
&\widetilde{c}_l(\la)=-\int^1_{-1} \frac{\sinh  k(1+(-1)^{l+1}y)}{\sinh 2 k} \widetilde{w}(\la, k,y) dy, \quad l=1,2,
\end{align*}
which imply
\begin{align*}
\widetilde{u}=-\widetilde{c}_1(\la)\widetilde{u}_1-\widetilde{c}_2(\la)\widetilde{u}_2.
\end{align*}
By \eqref{esti: psiL2omL2} and \eqref{esti: w1+w2L1} \eqref{claim c1}, \eqref{claim c2}, we deduce
\begin{align*}
\|u\|_{L^2_tL^2_y}\leq & \big\||\widetilde{c}_1(\la)|\|\widetilde{u}_1\|_{L^2_y}\big\|_{L^2_{\la}}+\big\||\widetilde{c}_2(\la)|\|\widetilde{u}_2\|_{L^2_y} \big\|_{L^2_{\la}}\\
\leq & C(\|\widetilde{c}_1\|_{L^2_{\la}}+\|\widetilde{c}_2\|_{L^2_{\la}})(\|\widetilde{w}_1\|_{L^1_y}+\|\widetilde{w}_2\|_{L^1_y})\\
\leq &C|k|^{-\frac12}\|\om^{in}\|_{L^2_y}.
\end{align*}
This inequality yields
\begin{align*}
|k|^{\frac12}\|e^{\epsilon(\nu k^2)^{\frac13}t}u^{(3)}\|_{L^2_tL^2_y}\leq &|k|^{\frac12}\|\widetilde{u}(\la)\|_{L^2_{\la}L^2_y}\leq \|\om^{in}\|_{L^2_y}.
\end{align*}
Then, combining the estimate of $u^{(i)}$ $(1\leq i\leq 3)$, we arrive at
\begin{equation}\label{uL2L2}
\begin{aligned}
&|k|^{\frac12}\|e^{\epsilon (\nu k^2)^{\frac13} t}u\|_{L^2_tL^2_y}\\
\leq &|k|^{\frac12}\|e^{\epsilon (\nu k^2)^{\frac13} t}u^{(1)}\|_{L^2_tL^2_y}+|k|^{\frac12}\|e^{\epsilon (\nu k^2)^{\frac13} t} u^{(2)}\|_{L^2_tL^2_y}+|k|^{\frac12}\|e^{\epsilon (\nu k^2)^{\frac13} t}u^{(3)}\|_{L^2_tL^2_y}\\
\leq &C(\nu^{\frac13}|k|^{-\frac13}\|\pa_y \om^{in}\|_{L^2_y}+ \|\om^{in}\|_{L^2_y}).
\end{aligned}
\end{equation}
This completes the proof of Proposition \ref{Prop homo pro enha}.
\end{proof}
\vspace{.2cm}
\noindent
\textbf{Acknowledgements.}  The authors thank to Prof. Dongyi Wei for many valuable discussions. This project was supported by the National Key Research and Development Program of China (No: 2022YFA1005700). Q. Chen was partially supported by National Natural Science Foundation of China (Grant No: 12471149). Z. Li was partially supported by the Postdoctoral Fellowship Program of CPSF (Grant No: GZC20240123). C. Miao was partially supported by National Natural Science Foundation of China (Grant No: 12371095).
\appendix
\section{Some Basic Estimates}
\begin{lemma}[Wirtinger's Inequality] If $f\in C^1[a,b]$ and $f(a)=f(b)=0$, then
\begin{align*}
\int^b_a|f(x)|^2dx\leq \left(\frac{b-a}{\pi}\right)^2 \int^b_a |f'(x)|^2 dx.
\end{align*}
\end{lemma}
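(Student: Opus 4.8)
The plan is to prove the inequality by expanding $f$ in the orthogonal sine basis adapted to the boundary conditions and then comparing the Parseval sums for $f$ and $f'$ term by term. Set $L=b-a$ and introduce $\phi_n(x)=\sin\bigl(n\pi(x-a)/L\bigr)$ for $n\geq 1$; these functions vanish at $x=a$ and $x=b$ and form a complete orthogonal system in $L^2[a,b]$ with $\int_a^b\phi_n^2\,dx=L/2$. Since $f(a)=f(b)=0$, I would write $f=\sum_{n\geq1}b_n\phi_n$ with $b_n=\tfrac{2}{L}\int_a^b f\phi_n\,dx$, so that Parseval's identity gives $\int_a^b|f|^2\,dx=\tfrac{L}{2}\sum_{n\geq1}|b_n|^2$. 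For complex-valued $f$ one applies the argument to the real and imaginary parts separately, or equivalently works with complex coefficients $b_n$ and $|b_n|^2$ throughout.

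The key step is to identify the expansion of $f'$ in the cosine system $\{\cos(n\pi(x-a)/L)\}_{n\geq0}$, which is also complete and orthogonal. For $n\geq1$, the computation I would carry out is integration by parts, using $f(a)=f(b)=0$:
\begin{align*}
\frac{2}{L}\int_a^b f'(x)\cos\!\Big(\frac{n\pi(x-a)}{L}\Big)\,dx
&=\frac{2}{L}\Big[f\cos\!\Big(\frac{n\pi(x-a)}{L}\Big)\Big]_a^b
+\frac{2n\pi}{L^2}\int_a^b f\,\sin\!\Big(\frac{n\pi(x-a)}{L}\Big)\,dx
=\frac{n\pi}{L}\,b_n,
\end{align*}
while the $n=0$ cosine coefficient equals $\tfrac{1}{L}\int_a^b f'\,dx=\tfrac{1}{L}(f(b)-f(a))=0$. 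Parseval's identity for the cosine system then yields $\int_a^b|f'|^2\,dx=\tfrac{L}{2}\sum_{n\geq1}(n\pi/L)^2|b_n|^2$. This is the exact analogue of the relation $\psi''=-\sum_{j}(\pi j/2)^2 a_j\varphi_j$ used in the proof of Theorem \ref{Th xi small}, and the vanishing of the boundary term is precisely where the hypothesis $f(a)=f(b)=0$ enters.

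Finally, since $(n\pi/L)^2\geq(\pi/L)^2$ for every $n\geq1$, I would compare the two sums:
\begin{align*}
\int_a^b|f'|^2\,dx=\frac{L}{2}\sum_{n\geq1}\Big(\frac{n\pi}{L}\Big)^2|b_n|^2
\geq\Big(\frac{\pi}{L}\Big)^2\frac{L}{2}\sum_{n\geq1}|b_n|^2
=\Big(\frac{\pi}{L}\Big)^2\int_a^b|f|^2\,dx,
\end{align*}
which rearranges to the claimed bound $\int_a^b|f|^2\,dx\leq(L/\pi)^2\int_a^b|f'|^2\,dx$ with $L=b-a$. The only genuine subtlety, and the main obstacle in a fully rigorous write-up, is the justification that the cosine coefficients of $f'$ are exactly $(n\pi/L)b_n$, i.e. that termwise differentiation of the sine series is legitimate; this follows from the integration by parts above together with the completeness of the two trigonometric systems, and requires only the assumed regularity $f\in C^1[a,b]$. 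As a byproduct, equality holds if and only if $b_n=0$ for all $n\geq2$, i.e. $f$ is a multiple of the first mode $\sin(\pi(x-a)/L)$, which explains the sharpness of the constant $(b-a)/\pi$.
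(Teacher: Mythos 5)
Your proof is correct and complete. The paper itself states this lemma in Appendix A without proof, treating it as a classical fact, so there is no paper argument to compare against; what you supply is the standard sharp proof via the sine expansion $f=\sum_{n\geq1}b_n\sin(n\pi(x-a)/L)$, Parseval's identity in both the sine and cosine systems, and the termwise comparison $(n\pi/L)^2\geq(\pi/L)^2$. This is also precisely the technique the paper deploys at the nonlinear level in the proof of Theorem \ref{Th xi small}, where $\psi$ is expanded in the basis $\varphi_j(y)=\sin(\pi j(y+1)/2)$ and the relation $\psi''=-\sum_j(\pi j/2)^2a_j\varphi_j$ is derived by exactly your integration-by-parts computation, so your argument is consistent in spirit with how the paper uses the inequality. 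One small remark: the ``genuine subtlety'' you flag at the end is not actually present in your write-up. You never need to differentiate the sine series termwise; your integration by parts computes the cosine coefficients of $f'$ directly from the function $f'\in C[a,b]\subset L^2[a,b]$, and Parseval applies to $f$ and $f'$ separately as $L^2$ functions against their respective complete orthogonal systems. So the proof as written is already fully rigorous under the hypothesis $f\in C^1[a,b]$, and your closing observation on the equality case correctly identifies the extremizer $\sin(\pi(x-a)/L)$, confirming sharpness of the constant.
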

\begin{lemma}\label{lemma sinh}
Let $10\nu\leq | k|<1$. We have
\begin{align}
&\Big\|\frac{\sinh ( k(1+y))}{\sinh 2 k}\Big\|_{L^\infty(-1,1)}\leq C, \quad \Big\|\frac{\sinh ( k(1-y))}{\sinh 2 k}\Big\|_{L^\infty(-1,1)}\leq C,\label{1}\\
& \Big\|\frac{\cosh ( k(1+y))}{\sinh 2 k}\Big\|_{L^2(-1,1)}\leq | k|^{-1},\quad \Big\|\frac{\cosh ( k(1-y))}{\sinh 2 k}\Big\|_{L^2(-1,1)}\leq | k|^{-1}.\label{2}
\end{align}
\end{lemma}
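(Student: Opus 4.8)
The plan is to reduce everything to the case $k>0$ and then rely on elementary monotonicity together with a single explicit integral. Since $\cosh$ is even and $\sinh$ is odd, replacing $k$ by $-k$ flips the sign of both $\sinh(k(1\pm y))$ and $\sinh 2k$ while leaving $\cosh(k(1\pm y))$ unchanged; as all four quantities are measured in norms and $|\sinh 2k|=\sinh 2|k|$, the ratios are unaffected, so I may assume $k>0$ throughout and write $k$ for $|k|$. The hypothesis $10\nu\leq|k|<1$ enters only through $0<k<1$, which keeps every argument $k(1\pm y)$ inside $(0,2)$; the lower bound $10\nu\leq|k|$ merely guarantees $k\neq0$ so that $\sinh 2k>0$ and the quotients are well defined.

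For \eqref{1}, I first note that for $y\in(-1,1)$ we have $1+y\in(0,2)$, hence $k(1+y)\in(0,2k)$. Since $\sinh$ is increasing and nonnegative on $[0,2k]$, it follows that $0\leq\sinh(k(1+y))\leq\sinh 2k$, so the quotient is bounded by $1$ pointwise and the $L^\infty$ bound holds with $C=1$. The estimate for $\sinh(k(1-y))$ is identical once one observes that $1-y\in(0,2)$ as well.

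For \eqref{2}, the crude bound $\cosh(k(1+y))\leq\cosh 2k$ combined with $\|1\|_{L^2(-1,1)}=\sqrt2$ only yields $\sqrt2\,\coth(2k)$, and because $\coth x>1/x$ this quantity fails to beat $k^{-1}$ as $k\to1$; \emph{overcoming this gap is the main obstacle}, and I would do so by computing the $L^2$ norm exactly rather than bounding it crudely. Using the antiderivative $\int\cosh^2 u\,du=\tfrac{u}{2}+\tfrac{\sinh 2u}{4}$ and the substitution $u=k(1+y)$ gives
\begin{align*}
\Big\|\frac{\cosh(k(1+y))}{\sinh 2k}\Big\|_{L^2(-1,1)}^2=\frac{1}{\sinh^2 2k}\Big(1+\frac{\sinh 4k}{4k}\Big),
\end{align*}
so \eqref{2} is equivalent to the one-variable inequality $k^2\big(1+\tfrac{\sinh 4k}{4k}\big)\leq\sinh^2 2k$ on $0<k<1$. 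Writing $s=2k\in(0,2)$ and using $\sinh 4k=2\sinh 2k\cosh 2k$ together with $4\sinh^2 s=2\cosh 2s-2$, this reduces to showing $g(s):=2\cosh 2s-2-s^2-\tfrac{s}{2}\sinh 2s\geq0$ on $(0,2)$. I would verify this from the Taylor data $g(0)=g'(0)=0$ and $g''(s)=6\cosh 2s-2-2s\sinh 2s$, checking $g''>0$ on $(0,2)$ via $g''(0)=4>0$ and $g'''(s)=10\sinh 2s-4s\cosh 2s>0$ (equivalently $\tanh 2s>\tfrac{2s}{5}$ there); then $g'$ increases from $g'(0)=0$, so $g'\geq0$, whence $g$ increases from $g(0)=0$ and $g\geq0$. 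The identical computation with $u=k(1-y)$ handles $\cosh(k(1-y))$, completing the proof.
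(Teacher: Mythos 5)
Your proof is correct, and it takes a genuinely more computational route than the paper's. For \eqref{1}, the paper rewrites the quotient in exponentials as $e^{k(y-1)}(1-e^{-2k(y+1)})/(1-e^{-4k})$ and uses $|k|<1$ to compare numerator and denominator with their linearizations $2k(1+y)$ and $4k$, which yields a bound $C$; your monotonicity observation $0\leq\sinh(k(1+y))\leq\sinh 2k$ is cleaner, gives the sharp constant $C=1$, and in fact requires no restriction on $|k|$ at all. For \eqref{2}, the paper uses the crude pointwise bound $\cosh(k(1\pm y))\leq\cosh 2k$ together with $\sinh 2k\gtrsim |k|$ for $|k|<1$, which --- exactly as you diagnose --- only delivers $C|k|^{-1}$: despite the constant $1$ displayed in the lemma, the paper's own proof concludes $\leq C|k|^{-1}$, and that is all that is used downstream (e.g.\ in Lemmas \ref{lemma: c1,c2 FL2} and \ref{lemma: c1,c2 FH-1}, where constants are absorbed). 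Your exact evaluation of the integral, reducing the claim to $g(s)=2\cosh 2s-2-s^2-\tfrac{s}{2}\sinh 2s\geq 0$ on $(0,2)$, is correct (I verified $g'$, $g''$, $g'''$), and the derivative analysis is sound; the one step left implicit, $\tanh x>x/5$ on $(0,4)$, is routine, following from concavity of $\tanh$ together with $\tanh'(0)=1>\tfrac15$ and $\tanh 4\approx 0.999>0.8$. In short, your argument buys the literal constant-one inequality stated in the lemma --- which genuinely fails for large $|k|$, so your restriction to $s\in(0,2)$ is essential --- at the cost of a one-variable calculus verification, whereas the paper's two-line asymptotic comparison is shorter but proves the estimate only up to an absolute constant.
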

\begin{proof}
Due to
\begin{align*}
\frac{\sinh ( k(1+y))}{\sinh 2 k}=\frac{e^{k(1+y)}-e^{-k(1+y)}}{e^{2k}-e^{-2k}}=\frac{e^{k(y-1)}(1-e^{-2k(y+1)})}{1-e^{-4k}},
\end{align*}
for $10\nu\leq |k|<1$, we obtain
\begin{align*}
\Big\|\frac{\sinh ( k(1+y))}{\sinh 2 k}\Big\|_{L^\infty(-1,1)}\lesssim \left\|\frac{2k(1+y)}{4k}\right\|_{L^\infty(-1,1)}\leq C.
\end{align*}
For $\frac{\cosh ( k(1+y))}{\sinh 2 k}$, thanks to $10\nu\leq |k|<1$, we get
\begin{align*}
\Big\|\frac{\cosh ( k(1+y))}{\sinh 2 k}\Big\|_{L^2(-1,1)} =\frac{\|e^{k(1+y)}+e^{-k(1+y)}\|_{L^2(-1,1)}}{|e^{2k}-e^{-2k}|}\leq \frac{|e^{4k}-e^{-4k|}}{|k||e^{2k}-e^{-2k}|}\leq C|k|^{-1}.
\end{align*}

In a similar way, we can obtain the estimates in \eqref{1} and \eqref{2} including the factors  $\sinh ( k(1-y))$ and $\cosh ( k(1-y))$.
\end{proof}
\begin{lemma}\label{claim}
For $|k|>0$, define
\begin{align*}
\tilde{c}_{l}(\la)=\int^{+\infty}_{0} a_{l}(t) e^{-it\la} dt
\end{align*}
with
\begin{equation*}
\begin{aligned}
a_l(t)=&\int^1_{-1}\frac{\sinh  k (1+(-1)^{l+1}y )}{\sinh 2 k}e^{-(\nu k^2)^{\frac13}t-ikyt} g(k,y) dy,\quad l=1,2.
\end{aligned}
\end{equation*}
If $g\in L^2_y$, then we have
\begin{align}
&\|(1+|\la/ k+1|)\widetilde{c}_1\|^2_{L^2_{\la}}\leq C|k|^{-1}\|g\|^2_{L^2_y},\label{claim c1}\\
&\|(1+|\la/ k-1|)\widetilde{c}_2\|^2_{L^2_{\la}}\leq C|k|^{-1}\|g\|^2_{L^2_y}.\label{claim c2}
\end{align}
\end{lemma}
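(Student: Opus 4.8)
The plan is to compute $\widetilde c_l$ in closed form and then split the estimate into an unweighted part, handled by Plancherel in $\la$, and a weighted part, where the weight $|\la/k\pm1|$ is converted into a time derivative acting on $a_l$. First, interchanging the $t$- and $y$-integrations and evaluating $\int_0^\infty e^{-((\nu k^2)^{1/3}+i(ky+\la))t}\,dt$, I would record the two equivalent representations
\begin{align*}
\widetilde c_l(\la)=\int_{-1}^1 H_l(y)\,\frac{dy}{(\nu k^2)^{1/3}+i(ky+\la)},\qquad H_l(y):=\frac{\sinh k(1+(-1)^{l+1}y)}{\sinh 2k}\,g(k,y),
\end{align*}
and observe that $\widetilde c_l$ is the Fourier transform in $t$ of $a_l(t)\mathbf{1}_{t>0}$, with $a_l(t)=e^{-(\nu k^2)^{1/3}t}\,\widehat{H_l}(kt)$ where $\widehat{H_l}(\xi)=\int_{-1}^1 H_l(y)e^{-iy\xi}\,dy$.

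For the unweighted term, Plancherel in $\la$ gives $\n{\widetilde c_l}_{L^2_\la}^2=2\pi\int_0^\infty|a_l(t)|^2\,dt$. Since $|a_l(t)|\le|\widehat{H_l}(kt)|$ for $t\ge0$, the substitution $\xi=kt$ together with Plancherel in $y$ yields
\begin{align*}
\int_0^\infty|a_l(t)|^2\,dt\le\frac1{|k|}\int_{\R}|\widehat{H_l}(\xi)|^2\,d\xi=\frac{2\pi}{|k|}\n{H_l}_{L^2_y}^2\le\frac{C}{|k|}\n{g}_{L^2_y}^2,
\end{align*}
where the last step uses $\n{H_l}_{L^2_y}\le C\n{g}_{L^2_y}$, a consequence of \eqref{1} in Lemma \ref{lemma sinh}. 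Hence $\n{\widetilde c_l}_{L^2_\la}^2\lesssim|k|^{-1}\n{g}_{L^2_y}^2$, which is the $1\cdot\widetilde c_l$ contribution to \eqref{claim c1}.

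For the weighted term it suffices to prove $\n{(\la+k)\widetilde c_1}_{L^2_\la}^2\lesssim|k|\,\n{g}_{L^2_y}^2$, since the factor $|\la/k+1|$ contributes $|k|^{-2}$. Writing $\la+k=(ky+\la)+k(1-y)$ and using $\frac{ky+\la}{(\nu k^2)^{1/3}+i(ky+\la)}=-i+\frac{i(\nu k^2)^{1/3}}{(\nu k^2)^{1/3}+i(ky+\la)}$, I would decompose
\begin{align*}
(\la+k)\widetilde c_1(\la)=-i\int_{-1}^1 H_1\,dy+i(\nu k^2)^{1/3}\widetilde c_1(\la)+\int_{-1}^1 k(1-y)H_1(y)\,\frac{dy}{(\nu k^2)^{1/3}+i(ky+\la)}.
\end{align*}
The crucial point is that the first, $\la$-independent term vanishes: because $\sinh k(1+y)/\sinh 2k$ lies in the span of $e^{ky}$ and $e^{-ky}$, one has $\int_{-1}^1 H_1\,dy=a_1(0)=0$ by the orthogonality $\lan g,e^{\pm ky}\ran=0$ that is in force where the lemma is applied. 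The second term is bounded in $L^2_\la$ by $(\nu k^2)^{1/3}\n{\widetilde c_1}_{L^2_\la}\lesssim(\nu k^2)^{1/3}|k|^{-1/2}\n{g}_{L^2_y}\lesssim|k|^{1/2}\n{g}_{L^2_y}$, using $(\nu k^2)^{1/3}\lesssim|k|$ in the relevant regime $|k|\gtrsim\nu$; the third term has exactly the form of $\widetilde c_1$ with $H_1$ replaced by $k(1-y)H_1$, so the Plancherel bound of the previous paragraph gives $\lesssim|k|^{-1/2}\n{k(1-y)H_1}_{L^2_y}\lesssim|k|^{1/2}\n{g}_{L^2_y}$. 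Adding these proves the weighted bound and hence \eqref{claim c1}; the estimate \eqref{claim c2} is identical after writing $\la-k=(ky+\la)-k(1+y)$, with $a_2(0)=0$ again from $\lan g,e^{\pm ky}\ran=0$.

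The main obstacle is exactly the $\la$-independent boundary term $-i\,a_l(0)$: it is a genuine constant in $\la$, hence not square integrable, and left alone it would make the weighted norm diverge. Its vanishing is not automatic for arbitrary $g\in L^2_y$ but rests on the structural fact that $\sinh k(1\pm y)/\sinh 2k\in\mathrm{span}\{e^{ky},e^{-ky}\}$ together with the orthogonality of $g$ to $e^{\pm ky}$. Once this term is identified and removed, everything else reduces to the two routine ingredients already prepared, namely the Plancherel identity in $t$ and the resolvent-kernel bound derived from it, so the remainder of the argument is bookkeeping.
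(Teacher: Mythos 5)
Your proof is correct, and at bottom it is the Fourier-dual presentation of the paper's argument rather than a wholly different one, though the difference is substantive. The paper rescales $\la\mapsto k(\la-1)$, trades the weight $|\la|$ for a time derivative via Plancherel, and computes $\pa_t b_1+ikb_1=\int_{-1}^1 ik(1-y)\frac{\sinh k(y+1)}{\sinh 2k}e^{-ikyt}g\,dy$; your splitting $\la+k=(ky+\la)+k(1-y)$ acting on the closed-form kernel $((\nu k^2)^{1/3}+i(ky+\la))^{-1}$ is exactly dual to this, producing the same modified density $k(1-y)H_1$ and the same $(\nu k^2)^{1/3}$ correction term (which in both proofs forces $\nu\lesssim|k|$, consistent with the regime $10\nu\le|k|$ where the lemma is applied, even though the statement says only $|k|>0$). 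What your route buys is that it surfaces the term the paper silently drops: the integration by parts behind the paper's inequality $|k|\,\||\la|\widetilde c_1(k(\la-1))\|^2_{L^2_\la}\le|k|^{-2}\|\pa_t(e^{itk}a_1)\|^2_{L^2_t}$ carries the boundary term $a_1(0)$ (equivalently, $e^{itk}a_1(t)\mathbf{1}_{t>0}$ must have no jump at $t=0$ to lie in $H^1(\R)$), and for generic $g\in L^2_y$ the constant $-ia_1(0)$ indeed makes $\la\widetilde c_1\notin L^2_\la$, so the lemma as literally stated requires the orthogonality $\lan g,e^{\pm ky}\ran=0$. You isolate this correctly, using $\sinh k(1\pm y)/\sinh 2k\in\mathrm{span}\{e^{ky},e^{-ky}\}$ to conclude $\int_{-1}^1 H_l\,dy=a_l(0)=0$, and you rightly observe that this hypothesis is in force at every application (Propositions \ref{Prop homo pro} and \ref{Prop homo pro enha} assume $\lan\om^{in},e^{\pm ky}\ran=0$); the paper relies on the same cancellation implicitly. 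One cosmetic remark: you invoke \eqref{1} of Lemma \ref{lemma sinh}, which is stated only for $10\nu\le|k|<1$, but the bound $|\sinh k(1\pm y)/\sinh 2k|\le 1$ holds for every $k\ne 0$ by monotonicity of $\sinh$, so your estimate $\|H_l\|_{L^2_y}\lesssim\|g\|_{L^2_y}$ is valid in the full stated range.
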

\begin{proof}
It is clear that
\begin{align*}
\|(1+|\la/ k+1|)\widetilde{c}_1(\la)\|^2_{L^2_{\la}}=|k|\int_{\R}(1+|\la|)^2|\widetilde{c}_1( k(\la-1))|^2 d\la
\end{align*}
and
\begin{align*}
\widetilde{c}_1( k(\la-1))=\int^{+\infty}_0 a_1(t) e^{itk-it\la k}dt.
\end{align*}
Using Plancherel's formula, we deduce that
\begin{align*}
|k|\|\widetilde{c}_1(k(\la-1))\|^2_{L^2_{\la}}\leq \|e^{it k}a_1(t)\|^2_{L^2_t}, \quad
|k|\||\la|\widetilde{c}_1(k(\la-1))\|^2_{L^2_{\la}}\leq |k|^{-2} \|\pa_t (e^{it k}a_1)\|^2_{L^2_t}.
\end{align*}
Thus, \eqref{claim c1} can be reduced to prove
\begin{align}\label{resolu}
\|e^{it k}a_1(t)\|^2_{L^2_t}\leq | k|^{-1}\|g\|^2_{L^2}, \quad \|\pa_t (e^{it k}a_1(t))\|^2_{L^2_t}\leq C|k|\|g\|^2_{L^2}.
\end{align}

Denote
\begin{align}\label{def:b}
b_1(t)=\int^1_{-1}\frac{\sinh  k (y+1)}{\sinh 2 k}e^{-ikyt} g(k,y) dy.
\end{align}
Then, we have
\begin{align}\label{esti: a1L2}
\|e^{i k t} a_1(t)\|^2_{L^2_t}=\|e^{i k t-(\nu  k^2)^{1/3}t} b_1(t)\|^2_{L^2_t}
\end{align}
and
\begin{equation}\label{esti: a1H1}
\begin{aligned}
\|\pa_t (e^{i k t} a_1(t))\|^2_{L^2_t}
=&\left\|e^{i k t-(\nu  k^2)^{1/3}t}\left(\pa_t b_1(t)+i k b_1(t)-(\nu  k^2)^{1/3}b_1(t)\right) \right\|^2_{L^2_t}\\
\leq&\|\pa_t b_1(t)+i kb_1(t)\|^2_{L^2_t}+\|(\nu k^2)^{\frac13}b_1(t)\|^2_{L^2_t}.
\end{aligned}
\end{equation}

It follows from Plancherel's formula that
\begin{align}\label{a01L2L2}
\int_{\R}|b_1(t)|^2 dt=\frac{2\pi}{| k|}\int^1_{-1}\Big|\frac{\sinh  k(y+1)}{\sinh 2 k}g( k,y)\Big|^2 dy\leq \frac{2\pi}{| k|}\|g\|^2_{L^2}.
\end{align}
By \eqref{def:b}, we obtain
\begin{align*}
\pa_t b_1(t)+i k b_1(t)=\int^1_{-1}i k(1-y)\frac{\sinh  k(y+1)}{\sinh 2 k}e^{-it k y} g dy.
\end{align*}
By Plancherel's formula and \eqref{1}, we obtain
\begin{equation}
\begin{aligned}\label{pata1+ikta1L2L2}
\int_{\R}|\pa_t b_1(t)+i k b_1(t)|^2dt=&\frac{2\pi}{| k|}\int^1_{-1}\Big|i k(1-y)\frac{\sinh  k(y+1)}{\sinh 2 k} g \Big|^2dy
\leq 2\pi| k|\|g\|^2_{L^2}.
\end{aligned}
\end{equation}
Putting \eqref{a01L2L2}, \eqref{pata1+ikta1L2L2} into  \eqref{esti: a1L2} and \eqref{esti: a1H1}, we directly obtain \eqref{resolu} and hence completes the proof of \eqref{claim c1}. In a similar way, we can obtain \eqref{claim c2}.
\end{proof}
\begin{lemma}\label{Lemma A1}
Let $(\pa^2_y- k^2)\psi=w$, $\psi(\pm1)=0$. For $0\leq |k|\leq 1$, we have
\begin{align}
&\|\psi''\|^2_{L^2}+2|k|^2\|\psi'\|^2_{L^2}+|k|^4\|\psi\|^2_{L^2}=\|w\|^2_{L^2},\label{psi2L2}\\
&\|\psi'\|^2_{L^2}+\|\psi\|^2_{L^2}\leq C\|w\|^2_{L^1}\leq C\|w\|^2_{L^2},\label{esti: psiL2omL2}\\
&\|\psi'\|_{L^\infty}+\|\psi\|_{L^\infty}\leq C\|w\|_{L^1}\leq C\|w\|_{L^2},\label{esti: psiLinftywL2}\\
&\|\psi'\|_{L^\infty}\leq \|\psi'\|^{1/2}_{L^2}\|w\|^{1/2}_{L^2},\label{esti: psiLinfty inte}
\end{align}
where $\psi'=\pa_y \psi$ and $\psi''=\pa_{yy}\psi$.
\end{lemma}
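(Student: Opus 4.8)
The plan is to establish the four assertions in the order \eqref{psi2L2}, \eqref{esti: psiLinftywL2}, \eqref{esti: psiL2omL2}, \eqref{esti: psiLinfty inte}, since the energy identity \eqref{psi2L2} feeds into the last one and the sup-bounds yield the $L^2$-bounds for free. For \eqref{psi2L2} I would simply expand
\[
\|w\|_{L^2}^2=\|\psi''-k^2\psi\|_{L^2}^2=\|\psi''\|_{L^2}^2-2k^2\,\re\!\int_{-1}^1\psi''\overline{\psi}\,dy+k^4\|\psi\|_{L^2}^2,
\]
and integrate by parts once: the boundary term vanishes because $\psi(\pm1)=0$, leaving $\int_{-1}^1\psi''\overline\psi\,dy=-\|\psi'\|_{L^2}^2$, so the cross term becomes $+2k^2\|\psi'\|_{L^2}^2$ and the stated identity follows. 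In particular this records the consequence $\|\psi''\|_{L^2}\le\|w\|_{L^2}$, which I will reuse below.

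For the sup-estimate \eqref{esti: psiLinftywL2} I would solve the boundary value problem $(\pa_y^2-k^2)\psi=w$, $\psi(\pm1)=0$, explicitly through its Green's function. Writing $\psi(y)=\int_{-1}^1 G_k(y,s)w(s)\,ds$ with, for $k\neq0$,
\[
G_k(y,s)=-\frac{\sinh\!\big(|k|(1+y_{<})\big)\,\sinh\!\big(|k|(1-y_{>})\big)}{|k|\sinh(2|k|)},\qquad y_{<}=\min(y,s),\ y_{>}=\max(y,s),
\]
and $G_0(y,s)=-\tfrac12(1+y_{<})(1-y_{>})$ for $k=0$, the whole point is the uniform bound
\[
\sup_{0\le|k|\le1}\Big(\|G_k\|_{L^\infty_{y,s}}+\|\pa_yG_k\|_{L^\infty_{y,s}}\Big)\le C.
\]
This I would obtain from the elementary monotonicity estimates $\sinh(|k|(1+y))\le\sinh(2|k|)$ and $\cosh(|k|(1+y))\le\cosh(2|k|)$ on $[-1,1]$, together with $\sup_{0<|k|\le1}\sinh(2|k|)/|k|=\sinh2$, which give $|G_k|\le\sinh2$ and $|\pa_yG_k|\le\cosh2$. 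Granting this, $\|\psi\|_{L^\infty}+\|\psi'\|_{L^\infty}\le C\|w\|_{L^1}$ is immediate, and $\|w\|_{L^1}\le\sqrt2\,\|w\|_{L^2}$ (Cauchy--Schwarz over the interval of length $2$) produces \eqref{esti: psiLinftywL2}. Estimate \eqref{esti: psiL2omL2} then follows at once from $\|f\|_{L^2}\le\sqrt2\,\|f\|_{L^\infty}$ applied to $f=\psi$ and $f=\psi'$.

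For the interpolation estimate \eqref{esti: psiLinfty inte} I would exploit that $\int_{-1}^1\psi'\,dy=\psi(1)-\psi(-1)=0$, so $\psi'$ has zero mean. Consequently the one-dimensional Agmon/Gagliardo--Nirenberg inequality $\|g\|_{L^\infty}^2\lesssim\|g\|_{L^2}\|g'\|_{L^2}$, valid for zero-mean $g$ (apply the fundamental theorem of calculus from a point where $|g|$ does not exceed its $L^2$-average, using $\tfrac{d}{dy}|g|^2=2\,\re(\overline{g}g')$ and Cauchy--Schwarz, treating real and imaginary parts of $g=\psi'$ separately), gives $\|\psi'\|_{L^\infty}^2\lesssim\|\psi'\|_{L^2}\|\psi''\|_{L^2}$; then $\|\psi''\|_{L^2}\le\|w\|_{L^2}$ from \eqref{psi2L2} closes the argument.

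The only genuinely delicate point is the uniform-in-$k$ control of $G_k$ and $\pa_yG_k$ all the way down to $k=0$: one must verify that the apparent $1/(|k|\sinh2|k|)$ singularity of the prefactor is exactly cancelled by the vanishing of the numerator, so that none of the constants degenerates as $|k|\to0$; once that is checked, the remaining steps are short computations.
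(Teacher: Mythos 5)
Your proposal is correct, and for \eqref{esti: psiL2omL2}--\eqref{esti: psiLinftywL2} it takes a genuinely different route from the paper. The paper proves the identity \eqref{psi2L2} exactly as you do, but then proceeds in the opposite order: it first proves \eqref{esti: psiL2omL2} by an energy/duality argument --- Wirtinger's inequality reduces everything to $\|\psi'\|_{L^2}$, and pairing the equation with $\psi$ gives $\|\psi'\|_{L^2}^2+|k|^2\|\psi\|_{L^2}^2=\lan -w,\psi\ran\le \|w\|_{L^1}\|\psi\|_{L^\infty}\le \sqrt{2}\,\|w\|_{L^1}\|\psi'\|_{L^2}$ via the pointwise bound $|\psi(y)|\le (1+y)^{1/2}\|\psi'\|_{L^2}$ --- and then deduces the sup bounds \eqref{esti: psiLinftywL2} from it, using $\|\psi\|_{L^\infty}\le\|\psi\|_{L^2}^{1/2}\|\psi'\|_{L^2}^{1/2}$ and, for $\psi'$, a mean-value point $y_1$ with $|\psi'(y_1)|\le\|\psi'\|_{L^2}$ together with $\psi''=k^2\psi+w$ and $|k|\le1$. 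You instead read off the $L^\infty$ bounds, and hence the $L^2$ bounds, from the explicit Dirichlet Green's function; your kernel and the uniform bounds $|G_k|\le\sinh 2$, $|\pa_y G_k|\le\cosh 2$ for $0\le|k|\le1$ are correct (the $1/(|k|\sinh 2|k|)$ prefactor is indeed cancelled, since the numerator is at most $\sinh(2|k|)^2$ and $\sinh(2|k|)/|k|$ is increasing), and the only unstated ingredient is uniqueness of the Dirichlet problem, which is immediate from the homogeneous energy identity $\|\phi'\|_{L^2}^2+k^2\|\phi\|_{L^2}^2=0$, so the representation $\psi=\int G_k w$ is legitimate. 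Your approach buys explicit constants uniform down to $k=0$ and makes the $L^1\to L^\infty$ smoothing transparent; the paper's energy argument avoids computing any kernel and is the version that generalizes when no closed-form Green's function is available. For \eqref{esti: psiLinfty inte} both arguments are the same Agmon-type interpolation combined with $\|\psi''\|_{L^2}\le\|w\|_{L^2}$ from \eqref{psi2L2}; your justification through the zero mean of $\psi'$ is the right one, since only $\psi(\pm1)=0$ is assumed and $\psi'$ need not vanish at the endpoints. One caveat: this argument (and in fact any argument --- consider a zero-mean spike at the boundary, for which $\|g\|_{L^\infty}^2$ exceeds $\|g\|_{L^2}\|g'\|_{L^2}$) yields \eqref{esti: psiLinfty inte} only with a harmless absolute constant, e.g. $\|\psi'\|_{L^\infty}^2\le 2\|\psi'\|_{L^2}\|\psi''\|_{L^2}$, not with the constant $1$ as literally displayed; the paper's one-line ``interpolation'' has the same imprecision, and only the $\lesssim$ form is used downstream (e.g. in \eqref{equ:u20,w02}), so your $\lesssim$ formulation is exactly what is needed.
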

\begin{proof}
Taking the $L^2$ inner produce with $w$, we obtain
\begin{align*}
\|\psi''\|^2_{L^2}+2|k|^2\|\psi'\|^2_{L^2}+|k|^2\|\psi\|^2_{L^2}=\lan (\pa^2_y- k^2)\psi,(\pa^2_y- k^2)\psi\ran=\lan w,w\ran=\|w\|^2_{L^2}.
\end{align*}

For \eqref{esti: psiL2omL2}, using Wirtinger's inequality, we have
\begin{align*}
\|\psi\|_{L^2}\leq \frac{2}{\pi}\|\psi'\|_{L^2}.
\end{align*}
Then, it suffices to prove
$$\|\pa_y\psi\|_{L^2}\leq C\|w\|_{L^1}.$$
Due to
\begin{align}\label{psiy}
|\psi(y)|=\left|\int^y_{-1}\psi'(\tilde{y}) d\tilde{y}\right|\leq (y+1)^{\frac12}\|\psi'\|_{L^2},
\end{align}
 we have
\begin{align*}
\|\psi'\|^2_{L^2}+| k|^2\|\psi\|^2_{L^2}=\lan-w,\psi\ran\leq \|w\|_{L^1}\|\psi\|_{L^\infty}\leq 2\|w\|_{L^1}\|\psi'\|_{L^2},
\end{align*}
which implies
\begin{align*}
\|\psi'\|_{L^2}\leq 2\|w\|_{L^1}.
\end{align*}

For \eqref{esti: psiLinftywL2}, by interpolation and \eqref{esti: psiL2omL2}, we infer that
\begin{align*}
\|\psi\|_{L^\infty}\leq \|\psi'\|^{1/2}_{L^2}\|\psi\|^{1/2}_{L^2}\leq \|w\|_{L^1}.
\end{align*}
For $y\in [0,1]$, there exists $y_1\in (y-1, y)\cap(-1,1)$ so that $|\psi'(y_1)|\leq \|\psi'\|_{L^2}$. Then we have
\begin{align*}
|\psi'(y)|\leq& |\psi'(y_1)|+\int^y_{y_1}|\psi''(z)|dz\\
\leq& \|\psi'\|_{L^2}+\int^{y}_{y_1}| k^2\psi(z)+w(z)|dz\\
\leq& \|\psi'\|_{L^2}+| k|^2\|\psi\|_{L^\infty}+\|w\|_{L^1}\leq C\|w\|_{L^1}.
\end{align*}
This inequality yields $\|\psi'\|_{L^\infty}\leq C\|w\|_{L^1}.$

By making use of interpolation and \eqref{psi2L2}, we have
\begin{align*}
\|\psi'\|_{L^\infty}\leq \|\psi'\|^{1/2}_{L^2}\|\psi''\|^{1/2}_{L^2}\leq \|\psi'\|^{1/2}_{L^2}\|w\|^{1/2}_{L^2}.
\end{align*}
\end{proof}
\section{Airy Function}\label{sec: Airy Function}
\setcounter{equation}{0}
\renewcommand\theequation{B.\arabic{equation}}
Let $Ai(z)$ be the classical Airy function which satisfies
\begin{align*}
\pa^2_z Ai(z)-zAi(z)=0.
\end{align*}
As presented in \cite{Romanov,Wasow,Chen-Li-Wei-Zhang}, for $|\mathrm{arg} z|\leq \pi-\varepsilon$, $\varepsilon>0$, $Ai(z)$ has the following asymptotic formula
\begin{align*}
Ai(z)=\frac{1}{2\sqrt{\pi}}z^{-\frac14} e^{-\frac23 z^{\frac32}}(1+R(z)),\quad R(z)=O(z^{-\frac32}).
\end{align*}
Define
\begin{align*}
A_0(z)=\int^{+\infty}_{e^{i\pi/6}z}Ai(t)dt=e^{i\pi/6}\int^{+\infty}_{z}Ai(e^{i\pi/6 }t)dt.
\end{align*}
Here, we show some properties for $A_0(z)$. 
\begin{lemma}{\rm\cite{Romanov}}\label{A1}
There holds that
\begin{itemize}
\item[1.] There exists $\delta_0>0$ such that $A_0(z)$ has no zeros in the half plane $\mathrm{Im} z\leq \delta_0$.
\item[2.] Let $a(\delta)=\sup\Big\{\mathrm{Re}\Big(\frac{A'_0(z)}{A_0(z)}\Big): \mathrm{Im} z\leq \delta\Big\}$. There exists $\delta_0>0$ so that $a(\delta)\in C([0,\delta_0])$ and
    \begin{align*}
    a(0)=-0.4843\cdots.
    \end{align*}
\item[3.] For $|\mathrm{arg}(ze^{\frac{i\pi}{6}})|\leq \pi-\varepsilon$, $\varepsilon>0$, we have the asymptotic formula
\begin{align*}
\frac{A'_0(z)}{A_0(z)}-e^{i\pi/6}(ze^{i\pi/6})^{\frac12}+O(z^{-1}).
\end{align*}
\end{itemize}
\end{lemma}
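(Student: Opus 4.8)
The plan is to reduce every assertion to the structure of the Airy integral $A_0$ and its derivative. First I would record the identity obtained by differentiating under the integral sign,
\[
A_0'(z)=-e^{i\pi/6}\,Ai\big(e^{i\pi/6}z\big),
\]
and observe that $v:=A_0'$ solves the rotated Airy equation $v''=izv$, so that $A_0$ is its decaying antiderivative, normalized by $A_0(z)\to 0$ as $z\to+\infty$ along the real axis. This identity is the hub: the logarithmic derivative in parts~2 and~3 is $A_0'/A_0=-e^{i\pi/6}Ai(e^{i\pi/6}z)/A_0(z)$, while the zeros in part~1 are exactly the zeros of $A_0$ itself.

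For part~3 I would feed the asymptotic expansion $Ai(w)=\tfrac{1}{2\sqrt\pi}w^{-1/4}e^{-\frac23 w^{3/2}}(1+O(w^{-3/2}))$ into the integral defining $A_0$, with $w=e^{i\pi/6}z$. Using $t^{1/2}e^{-\frac23 t^{3/2}}=-\tfrac{d}{dt}e^{-\frac23 t^{3/2}}$ and integrating by parts once along the ray of integration produces the leading boundary term $A_0(z)=w^{-1/2}Ai(w)\big(1+O(w^{-3/2})\big)$. Dividing the identity for $A_0'$ by this expression gives $A_0'/A_0=-e^{i\pi/6}(e^{i\pi/6}z)^{1/2}\big(1+O(z^{-3/2})\big)$, which is the claimed formula once the error is absorbed into $O(z^{-1})$. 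The only care needed is that $|\arg(e^{i\pi/6}z)|\le\pi-\varepsilon$ keeps $w$ inside the sector where the Airy asymptotics are valid.

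For part~1 I would split the half-plane $\{\im z\le\delta_0\}$ into a large-$|z|$ region and a compact core. In the large-$|z|$ region the asymptotic $A_0(z)\sim w^{-1/2}Ai(w)$ from part~3 forces $A_0\ne 0$, because the zeros of $Ai$ lie only on the negative real axis while $w=e^{i\pi/6}z$ has $\arg w$ bounded away from $\pm\pi$ once $\im z\le\delta_0$; hence beyond a fixed radius no zero can occur. For the compact core one is reduced to a finite matter: either an argument-principle count on a fixed contour, or directly the spectral result of Romanov identifying the zeros of $A_0$ with the (stable) Orr--Sommerfeld eigenvalues, all of which satisfy $\im z>\delta_0$ for a small $\delta_0>0$. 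Securing a uniform gap $\delta_0$ is precisely where the lower-half-plane zero-freeness becomes quantitative rather than merely asymptotic, and I expect this to be the main obstacle.

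Finally, for part~2, once part~1 guarantees $A_0\ne 0$ on $\{\im z\le\delta_0\}$, the function $A_0'/A_0$ is holomorphic there, so $\re(A_0'/A_0)$ is harmonic. Part~3 shows $\re(A_0'/A_0)\to-\infty$ as $z\to\infty$ in this half-plane, since its leading term is $-\re\big(e^{i\pi/6}(e^{i\pi/6}z)^{1/2}\big)$, whose real part tends to $-\infty$ uniformly in the admissible directions. Consequently the supremum defining $a(\delta)$ is attained at a finite point and, by the maximum principle, on the boundary line $\im z=\delta$; moreover the maximizer stays in a fixed compact set for $\delta$ near $0$, so $a\in C([0,\delta_0])$ follows from continuous dependence of the harmonic function and its maximizer on $\delta$. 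The value $a(0)=\max_{x\in\R}\re\big(A_0'(x)/A_0(x)\big)=-0.4843\cdots$ is then an explicit one-variable extremization along the real axis, read off from the Airy function. Thus every part rests on the hub identity and on part~1 for the compact core, which is the step I would allot the most effort.
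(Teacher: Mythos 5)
First, a point of reference: the paper does not prove Lemma \ref{A1} at all --- it is quoted from Romanov \cite{Romanov} (the statement even carries a misprint: the display in part~3 should read $\frac{A_0'(z)}{A_0(z)}=-e^{i\pi/6}(ze^{i\pi/6})^{\frac12}+O(z^{-1})$). So there is no internal proof to compare against, and your proposal has to be judged as a reconstruction of Romanov's classical argument. Where you are specific, you are correct and follow the classical route: the hub identity $A_0'(z)=-e^{i\pi/6}Ai(e^{i\pi/6}z)$ is right; the integration by parts of the Airy asymptotics gives $A_0(z)=w^{-1/2}Ai(w)\big(1+O(w^{-3/2})\big)$ with $w=e^{i\pi/6}z$, which yields part~3 (and confirms the intended sign and the $O(z^{-1})$ error); the sector bookkeeping is sound, since for $\mathrm{Im}\,z\leq \delta_0$ and $|z|$ large one has $|\arg w|\leq \pi-\pi/6+o(1)$, keeping $w$ away from the negative real axis where the zeros of $Ai$ lie; and, granting part~1, the harmonicity/maximum-principle argument for part~2 correctly pushes the supremum to the boundary line and reduces $a(0)$ to a one-variable extremization over $\R$.

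The genuine gap is exactly where you predicted it: the compact core in part~1 and the value $a(0)=-0.4843\cdots$ in part~2. These are the irreducibly quantitative content of the lemma and cannot be produced by the soft arguments you give. The ``argument-principle count on a fixed contour'' is not performed, and performing it amounts to rigorous numerics (a winding-number computation for $A_0$ along a contour enclosing the compact region, i.e.\ controlled evaluation of the Airy integral); similarly $a(0)$ is obtained in \cite{Romanov} by numerical evaluation of $\mathrm{Re}\big(A_0'(x)/A_0(x)\big)$ on the real axis, not by any closed-form extremization. Your fallback --- invoking ``the spectral result of Romanov identifying the zeros of $A_0$ with the Orr--Sommerfeld eigenvalues'' --- is circular: in Romanov the implication runs from this Airy-function lemma \emph{to} the spectral stability of Couette flow, not conversely, and the zeros of $A_0$ are not literally OS eigenvalues (the dispersion relation involves $A_0$ evaluated at rescaled boundary points, and only asymptotically). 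So as a self-contained proof the proposal is incomplete at precisely these two points; as a reconstruction of the architecture of the cited proof, it is faithful, and its use in the paper (a citation) requires nothing more.
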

\begin{lemma}\rm{\cite{Chen-Li-Wei-Zhang}}\label{A2}
Let $\delta_0$ be as in Lemma \ref{A1}. there exists $c>0$ so that for $\mathrm{Im} z\leq \delta_0$,
\begin{align*}
\Big|\frac{A'_0(z)}{A_0(z)}\Big|\leq 1+|z|^{\frac12},\quad Re\frac{A'_0(z)}{A_0(z)}\leq -c(1+|z|^{\frac12}).
\end{align*}
\end{lemma}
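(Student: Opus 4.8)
The plan is to prove both bounds by splitting the admissible region $\{\im z\le\delta_0\}$ into a large-modulus part $\{|z|\ge M\}$, where the asymptotic expansion of $A_0'/A_0$ does all the work, and a bounded part $\{|z|\le M\}$, where the non-vanishing of $A_0$ together with Lemma~\ref{A1} suffices. First I would reduce the whole statement to understanding the leading term supplied by part~3 of Lemma~\ref{A1}, namely
\begin{equation*}
\frac{A_0'(z)}{A_0(z)}=-e^{i\pi/6}\big(e^{i\pi/6}z\big)^{1/2}+O(|z|^{-1}),
\end{equation*}
valid on the sector $|\arg(e^{i\pi/6}z)|\le\pi-\va$. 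Writing $z=re^{i\theta}$ and taking the principal square root, the leading term equals $r^{1/2}e^{i(\theta/2+5\pi/4)}$, so its modulus is exactly $r^{1/2}=|z|^{1/2}$ and its real part is $|z|^{1/2}\cos(\theta/2+5\pi/4)$.

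For the modulus bound on $\{|z|\ge M\}$ this immediately gives $|A_0'/A_0|=|z|^{1/2}+O(|z|^{-1})\le|z|^{1/2}+1$ once $M$ is large enough to absorb the error. For the real-part bound I would exploit the constraint $\im z\le\delta_0$: when $r\ge M$ it forces $\sin\theta\le\delta_0/r$, i.e.\ $\theta$ lies in $[-\pi,0]$ up to an arbitrarily small sliver near $\theta=0^{+}$ (the sliver near $\theta=\pi$ is automatically excluded, since it leaves the validity sector $|\theta+\pi/6|\le\pi-\va$). On $\theta\in[-\pi,0]$ one has $\theta/2+5\pi/4\in[3\pi/4,5\pi/4]$, hence $\cos(\theta/2+5\pi/4)\le-\tfrac{\sqrt2}{2}$; shrinking the sliver keeps the cosine below, say, $-\tfrac12$. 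Thus $\re(A_0'/A_0)\le-\tfrac12|z|^{1/2}+O(|z|^{-1})\le-c|z|^{1/2}$ on $\{|z|\ge M\}$.

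For the bounded part $\{|z|\le M,\ \im z\le\delta_0\}$, which is compact, part~1 of Lemma~\ref{A1} guarantees that $A_0$ has no zeros there, so $A_0'/A_0$ is holomorphic and $|A_0'/A_0|$ is bounded by a constant, which is dominated by $C(1+|z|^{1/2})$. The real-part bound follows from part~2: since $a(\cdot)$ is continuous on $[0,\delta_0]$ with $a(0)=-0.4843\dots<0$, after possibly decreasing $\delta_0$ we have $a(\delta_0)<0$, whence $\re(A_0'/A_0)\le a(\delta_0)\le-c(1+M^{1/2})$ for small enough $c$, using $1+|z|^{1/2}\le1+M^{1/2}$ on this region. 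Taking the smaller of the two constants $c$ and enlarging $M$ so that the asymptotic error terms are controlled then yields both inequalities.

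The main obstacle is the real-part computation in the large-modulus regime: one must verify that the half-plane condition $\im z\le\delta_0$ together with the sectorial restriction $|\arg(e^{i\pi/6}z)|\le\pi-\va$ really confines $\arg z$ to a set on which $\cos(\theta/2+5\pi/4)$ stays uniformly negative, with no cancellation in the transitional directions $\theta\to0^{+}$ and $\theta\to-\pi$. The fact that the $O(|z|^{-1})$ error is of strictly lower order than $|z|^{1/2}$ is what makes $M$ finite and the constant $c$ uniform; matching this with the compact-set estimate (where Lemma~\ref{A1} parts~1--2 replace the asymptotics) is then routine once the angular bookkeeping above is in place.
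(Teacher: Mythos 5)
The paper itself offers no proof of this lemma --- it is quoted from \cite{Chen-Li-Wei-Zhang} --- so your proposal can only be compared with the standard argument, which it does follow: the asymptotic formula of part 3 of Lemma \ref{A1} for $|z|\geq M$, and Romanov's facts (non-vanishing of $A_0$, continuity of $a(\delta)$ with $a(0)<0$) on the bounded remainder. The reduction, the computation of the leading term $r^{1/2}e^{i(\theta/2+5\pi/4)}$, and the use of part 2 of Lemma \ref{A1} for the real part on the compact set are all correct.

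There is, however, one concrete error, located exactly at the spot you yourself flagged as the main obstacle. You dismiss the sliver of directions near $\theta=\pi$ (points $z=-R+i\varepsilon$ with $0<\varepsilon\leq\delta_0$ and $R$ large) on the grounds that it ``leaves the validity sector $|\theta+\pi/6|\leq\pi-\va$''. It does not: for such $z$ the principal value of $\arg(e^{i\pi/6}z)$ is $\approx 7\pi/6-2\pi=-5\pi/6$, well inside $|\arg(e^{i\pi/6}z)|\leq\pi-\va$. Equivalently, the continuous angular parametrization of the half-plane $\{\im z\leq\delta_0\}$ at radius $r$ is $\theta\in[-\pi-\eta_r,\ \eta_r]$ with $\eta_r=\arcsin(\delta_0/r)$, and the asymptotic formula applies on all of this range. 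As written, your proof therefore leaves an unbounded subregion of $\{\im z\leq\delta_0\}$ unestimated; worse, if one were to cover it by plugging the representative $\theta\approx+\pi$ into your formula $r^{1/2}e^{i(\theta/2+5\pi/4)}$, one would get $\cos(7\pi/4)=+\tfrac{\sqrt{2}}{2}>0$, the wrong sign --- a symptom of crossing the branch cut of $(e^{i\pi/6}z)^{1/2}$, which sits at $\theta=5\pi/6$, not at $\theta=\pi$. The repair is immediate: on $\theta\in[-\pi-\eta_r,-\pi]$ one has $\theta/2+5\pi/4\in[3\pi/4-\eta_r/2,\,3\pi/4]$, hence $\cos(\theta/2+5\pi/4)\leq-\tfrac{\sqrt{2}}{2}+O(\eta_r)$, and over the whole admissible range the cosine is at most $-\cos(\pi/4+\eta_r/2)\leq-\tfrac12$ for $r\geq M$ large, so the conclusion $\re(A_0'/A_0)\leq-\tfrac12 r^{1/2}+O(r^{-1})$ survives; but this region must be checked, not excluded.

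A minor remark: on the compact part your argument yields $|A_0'/A_0|\leq C$, hence $C(1+|z|^{1/2})$, whereas the lemma as printed asserts the constant $1$. The corresponding statement in the cited source carries a constant, so this is a typographical looseness of the present paper rather than a defect of your approach, but it is worth noting that compactness alone cannot produce the constant $1$.
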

Define
\begin{align*}
\eta(z,x)=\frac{A_0(z+x)}{A_0(z)}=\exp\Big(\int^x_0 \frac{A'_0(z+t)}{A_0(z+t)}dt\Big),
\end{align*}
we have
\begin{lemma}\rm{\cite{Chen-Li-Wei-Zhang}}\label{A3}
For $0<\sigma<-a(0)$, there exists $\delta_1>0$ so that for $\mathrm{Im} z\leq \delta_1$ and $x\geq 0$,
\begin{align*}
|\eta(z,x)|\leq e^{-a_{\sigma}x},
\end{align*}
with $a_{\sigma}=-a(0)-\sigma$.
\end{lemma}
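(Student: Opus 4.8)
The plan is to convert the pointwise control on $\mathrm{Re}(A_0'/A_0)$ built into the constant $a(\delta)$ of Lemma \ref{A1} into exponential decay of $\eta$, simply by integrating along the horizontal ray $t \mapsto z+t$. The whole estimate hinges on the integral representation already recorded before the statement,
\[
\eta(z,x) = \exp\left(\int_0^x \frac{A_0'(z+t)}{A_0(z+t)}\,dt\right),
\]
together with the elementary identity $|e^w| = e^{\mathrm{Re}\,w}$, which reduces the modulus of $\eta$ to
\[
|\eta(z,x)| = \exp\left(\int_0^x \mathrm{Re}\frac{A_0'(z+t)}{A_0(z+t)}\,dt\right).
\]
Thus it suffices to bound the real part of the logarithmic derivative uniformly along the path.

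First I would fix $\delta_1 \in (0,\delta_0]$, to be specified, where $\delta_0$ is the constant from Lemma \ref{A1} ensuring that $A_0$ has no zeros in $\{\mathrm{Im}\,z \leq \delta_0\}$. The key observation is that the shift parameter $t \geq 0$ is real, so for any $z$ with $\mathrm{Im}\,z \leq \delta_1$ one has $\mathrm{Im}(z+t) = \mathrm{Im}\,z \leq \delta_1 \leq \delta_0$ for all $t \in [0,x]$. Hence the integration path lies entirely in the zero-free half-plane, so the representation above makes sense, and by the defining supremum $a(\delta_1) = \sup\{\mathrm{Re}(A_0'(\zeta)/A_0(\zeta)) : \mathrm{Im}\,\zeta \leq \delta_1\}$ the integrand satisfies $\mathrm{Re}(A_0'(z+t)/A_0(z+t)) \leq a(\delta_1)$ throughout. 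Integrating over $[0,x]$ gives $|\eta(z,x)| \leq e^{a(\delta_1)x}$.

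The remaining step is to choose $\delta_1$ so that $a(\delta_1) \leq -a_\sigma$. Here I would invoke the continuity of $a(\delta)$ on $[0,\delta_0]$ from Lemma \ref{A1}, together with the value $a(0) = -0.4843\cdots < 0$; continuity gives $\lim_{\delta\to 0^+} a(\delta) = a(0)$. Since $0 < \sigma < -a(0)$, the number $a_\sigma = -a(0) - \sigma$ is positive and $-a_\sigma = a(0) + \sigma > a(0)$, so continuity produces a $\delta_1 \in (0,\delta_0]$ with $a(\delta_1) \leq a(0) + \sigma = -a_\sigma$. Combining the two bounds yields $|\eta(z,x)| \leq e^{a(\delta_1)x} \leq e^{-a_\sigma x}$, which is the assertion.

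I do not expect a serious obstacle: the result is a soft consequence of Lemma \ref{A1}, relying only on the zero-freeness of $A_0$ near the real axis, which keeps the logarithmic derivative finite along the segment, and on the continuity of $\delta \mapsto a(\delta)$ at $\delta = 0$. The one point deserving explicit verification is that adding the real parameter $t$ leaves $\mathrm{Im}(z+t)$ unchanged, so that the whole ray stays within the region where $a(\delta_1)$ governs the real part; after that, the bound is immediate. The quantitative heart of the matter, namely the numerical value of $a(0)$ and the continuity of $a$, is precisely the content imported from Lemma \ref{A1}, and it is this flexibility that later permits the concrete choice $a_\sigma = 0.47$ used in Lemma \ref{lemma: Cij}.
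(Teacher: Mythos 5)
Your proof is correct and is exactly the standard argument behind this lemma, which the paper itself only quotes from \cite{Chen-Li-Wei-Zhang}: write $|\eta(z,x)|=\exp\big(\int_0^x \mathrm{Re}\,\frac{A_0'(z+t)}{A_0(z+t)}\,dt\big)$, note that the horizontal path stays in $\{\mathrm{Im}\,\zeta\leq\delta_1\}$ so the integrand is bounded by $a(\delta_1)$, and choose $\delta_1$ by continuity of $a$ at $0$ so that $a(\delta_1)\leq a(0)+\sigma=-a_\sigma$. No gaps; this matches the source's proof essentially verbatim.
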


\end{document}